\newcommand{\df}{\dfrac}
\newcommand{\tf}{\tfrac}
\renewcommand{\Re}{\operatorname{Re}}
\renewcommand{\a}{\alpha}
\renewcommand{\b}{\beta}
\newcommand{\g}{\gamma}
\newcommand{\G}{\Gamma}
\renewcommand{\l}{\lambda}
\renewcommand{\(}{\left\(}
\renewcommand{\)}{\right\)}
\renewcommand{\[}{\left\[}
\renewcommand{\]}{\right\]}
\renewcommand{\i}{\infty}
\numberwithin{equation}{section}
\theoremstyle{plain}
\newtheorem{theorem}{Theorem}[section]
\newtheorem{lemma}[theorem]{Lemma}
\newtheorem{corollary}[theorem]{Corollary}
\newtheorem{remark}[]{Remark}
\def\proof{\@ifnextchar[{\@oproof}{\@nproof}}
\def\@oproof[#1][#2]{\trivlist\item[\hskip\labelsep\textit{#2 Proof of\
		#1.}~]\ignorespaces}
\def\@nproof{\trivlist\item[\hskip\labelsep\textit{Proof.}~]\ignorespaces}
\def\@tocline#1#2#3#4#5#6#7{\relax
	\ifnum #1>\c@tocdepth 
	\else
	\par \addpenalty\@secpenalty\addvspace{#2}%
	\begingroup \hyphenpenalty\@M
	\@ifempty{#4}{%
		\@tempdima\csname r@tocindent\number#1\endcsname\relax
	}{%
		\@tempdima#4\relax
	}%
	\parindent\z@ \leftskip#3\relax \advance\leftskip\@tempdima\relax
	\rightskip\@pnumwidth plus4em \parfillskip-\@pnumwidth
	#5\leavevmode\hskip-\@tempdima
	\ifcase #1
	\or\or \hskip 1em \or \hskip 2em \else \hskip 3em \fi%
	#6\nobreak\relax
	\dotfill\hbox to\@pnumwidth{\@tocpagenum{#7}}\par
	\nobreak
	\endgroup
	\fi}
\begin{document}
	\title[]{Modular relations involving generalized digamma functions}
	\author{Atul Dixit}
	\author{Sumukha Sathyanarayana}
	\author{N. Guru Sharan }
	\address{Discipline of Mathematics, Indian Institute of Technology Gandhinagar, Palaj, Gandhinagar, Gujarat-382355, India}
	\email{adixit@iitgn.ac.in; sumukha.s@iitgn.ac.in; gurusharan.n@iitgn.ac.in}
	\thanks{2020 \textit{Mathematics Subject Classification.} Primary 11M35, 33B15; Secondary 15B05, 41A60.\\
		\textit{Keywords and phrases.} Ramanujan's transformation, generalized digamma functions, Stieltjes constants, modular relation, inversion formula for triangular Toeplitz matrix.}
	
	\begin{abstract}
	Generalized digamma functions $\psi_k(x)$, studied by Ramanujan, Deninger, Dilcher, Kanemitsu, Ishibashi etc., appear as the Laurent series coefficients of the zeta function associated to an indefinite quadratic form. In this paper, a modular relation of the form $F_k(\alpha)=F_k(1/\alpha)$ containing infinite series of $\psi_k(x)$, or, equivalently, between the generalized Stieltjes constants $\gamma_k(x)$,  is obtained for any $k\in\mathbb{N}$. When $k=0$, it reduces to a famous transformation given on page $220$ of Ramanujan's Lost Notebook. For $k=1$, an integral containing Riemann's $\Xi$-function, and corresponding to the aforementioned modular relation, is also obtained along with its asymptotic expansions as $\alpha\to0$ and $\alpha\to\infty$. Carlitz-type and Guinand-type finite modular relations involving $\psi_j^{(m)}(x), 0\leq j\leq k, m\in\mathbb{N}\cup\{0\},$ are also derived, thereby extending previous results on the digamma function $\psi(x)$. The extension of Guinand's result for $\psi_j^{(m)}(x), m\geq2,$ involves an interesting combinatorial sum $h(r)$ over integer partitions of $2r$ into exactly $r$ parts. This sum plays a crucial role in an inversion formula needed for this extension. This formula has connection with the inversion formula for the inverse of a triangular Toeplitz matrix. The modular relation for $\psi_j'(x)$ is subtle and requires delicate analysis.
	\end{abstract}
	\maketitle
	\tableofcontents
	\section{Introduction}\label{intro}
	Let
	\begin{equation}\label{ddash}
		\mathfrak{D'}:=\{\eta: |\arg(\eta)|\leq\pi-\Delta, \Delta>0\}.
	\end{equation} 
	The Hurwitz zeta function $\zeta(z, x)$ is defined \cite[p.~36]{titch} for Re$(z)>1$ and $x\in\mathfrak{D'}$ by 
	$\zeta(z, x):=\sum_{n=0}^{\infty}(n+x)^{-z}$.
	It  can be analytically continued to the entire $z$-complex plane except for a simple pole at $z=1$ with residue $1$, and it is well-known that $\zeta(z, 1)=\zeta(z)$, the Riemann zeta function.
	The Laurent series expansion of $\zeta(z, x)$ around $z=1$ is given by 
	\begin{equation}\label{hurwitzlaurent}
		\zeta( z, x)=\frac{1}{z-1}+\sum_{k=0}^{\infty}\frac{(-1)^k\gamma_k(x)}{k!}(z-1)^k,
	\end{equation}
	where\footnote{Berndt includes the factor $\frac{(-1)^{k}}{k!}$ in the definition of $\gamma_k(z)$ and does not have it in the summand of \eqref{hurwitzlaurent} as has been done here.} Berndt \cite[Theorem 1]{berndthurwitzzeta} showed that
	\begin{equation}\label{scz}
		\gamma_k(x)=\lim_{n\to\infty}\left(\sum_{j=0}^{n}\frac{\log^{k}(j+x)}{j+x}-\frac{\log^{k+1}(n+x)}{k+1}\right).
	\end{equation}
	These $\gamma_k(x)$ called the \emph{generalized Stieltjes constants}. Clearly, letting $x=1$ in \eqref{hurwitzlaurent} and \eqref{scz} respectively gives the Laurent series expansion of $\zeta(z)$ and the corresponding \emph{Stieltjes constants}, that is, $\gamma_k(1)=\gamma_k$.
	
The literature on generalized Stieltjes constants is vast. We mention only a few of the works on the topic. Williams and Zhang \cite[Theorem 1]{williams-zhang} generalized Berndt's aforementioned result, and with the help of which they could estimate $\gamma_k(x)$ for $x\in(0,1]$; see \cite[Theorem 3]{williams-zhang}. Coffey \cite{coffey} gave an addition formula for $\gamma_k(x)$ whose special case \cite[Corollary 2]{coffey} (see \eqref{coffey special} below) we will use in the sequel. Very recently, Pr\'{e}vost \cite[Theorem 2]{prevost} derived a general formula for $\gamma_k(x)$ involving Stirling numbers of the first kind and depending on the degrees of numerator and denominator of the Pad\'{e} approximant used for approximating $e^{-x}$, and, as an application, obtained a new representation for the first Stieltjes constant \cite[Proposition 1]{prevost}, namely, $\gamma_1=\sum_{j=1}^{\infty}\zeta'(2j+1)/(2j+1)$. Blagouchine \cite{blagouchine} undertook a detailed historical survey on $\gamma_k$ and $\gamma_k(x)$, and, in particular, gave an elegant result to compute $\gamma_1(x)$ for $x\in\mathbb{Q}$. A short proof of this result was later given by Chatterjee and Khurana \cite{chatkhur}. There have been several studies on the asymptotic analysis of $\gamma_k(x)$ and on developing numerical methods for their efficient calculation; see, for example, the recent preprint of Tyagi \cite{tyagi} and the references therein. 
	
	Even though the generalized Stieltjes constants have received a lot of attention in the recent years, except for a few studies \cite{ishikan}, \cite{kanemitsuclosed}, they have not been explored much through their connection with higher analogues of the Euler gamma function. This connection is now explained.
	
	For a non-negative integer $k$ and $x\in\mathfrak{D'}$, Dilcher \cite{dilcher} defined the generalized gamma function $\Gamma_k(x)$ by
	\begin{equation*}
		\Gamma_k(x):=\lim_{n\to\infty}\frac{\text{exp}{\left(\frac{\log^{k+1}(n)}{k+1}x\right)}\prod\limits_{j=1}^{n}\text{exp}{\left(\frac{\log^{k+1}(j)}{k+1}\right)}}{\prod\limits_{j=0}^{n}\text{exp}{\left(\frac{\log^{k+1}(j+z)}{k+1}\right)}}.
	\end{equation*}
	Comparing the above limit with the the limit representation of $\Gamma(x)$ \cite[Equation (3.9)]{temme}, it is clear that  $\Gamma_0(x)=\Gamma(x)$. As said by Dilcher \cite[p.~56]{dilcher}, the generalized gamma function $\Gamma_k(x)$ relates to the Stieltjes constant $\gamma_k$ in the same way as the Euler Gamma function $\Gamma(x)$ relates to the Euler constant $\gamma(=\gamma_0)$.  
	
	Dilcher derived various properties of $\Gamma_k(x)$ in \cite{dilcher}. At the end of his paper, he considered the logarithmic derivative of $\Gamma_k(x)$ and proved that \cite[Proposition 10]{dilcher}
	\begin{equation}\label{psikx}
		\psi_k(x):=\frac{\Gamma_k'(x)}{\Gamma_k(x)}=-\gamma_k-\frac{\log^{k}(x)}{x}-\sum_{n=1}^{\infty}\left(\frac{\log^{k}(n+x)}{n+x}-\frac{\log^{k}(n)}{n}\right),
	\end{equation}
	so that $\psi_0(x)=\psi(x)$, the digamma function. Note that letting $x=1$ in \eqref{scz} and replacing $n$ by $n-1$ yields
	\begin{equation*}
		\gamma_k:=\lim_{n\to\infty}\left(\sum_{j=1}^{n}\frac{\log^{k}(j)}{j}-\frac{\log^{k+1}(n)}{k+1}\right).
	\end{equation*}
	Along with \eqref{psikx} and the fact \cite[Lemma 1]{dilcher} that
	\begin{equation*}
		\lim_{n\to\infty}\left(\log^{k+1}(n+x)-\log^{k+1}(n)\right)=0 \hspace{6mm} (x\in\mathfrak{D'}),
	\end{equation*}
	this implies that
	\begin{equation}\label{psigamma}
		\psi_k(x)=-\gamma_k(x).
	\end{equation}
	This was also shown by Shirasaka \cite[p.~136]{shirasaka}. Thus the generalized Stieltjes constant $\gamma_k(x)$ is simply the additive inverse of the logarithmic derivative of $\Gamma_k(x)$, or, in other words, of the \emph{generalized digamma function} $\psi_k(x)$.
	
	Ramanujan briefly treated $\psi_k(x)$ in Entry 22 of Chapter 8 of his second notebook; see \cite{RN_1}. Also, ten years before Dilcher's paper \cite{dilcher} appeared, Deninger \cite{deninger} essentially studied the function $\log\Gamma_1(x)$ in conjunction  with his goal of obtaining an analogue of the Chowla-Selberg formula for real quadratic fields. In his terminology, let $R:\mathbb{R}^{+}\to\mathbb{R}$ denote the function uniquely defined by the difference equation 
	\begin{equation}\label{dendef}
		R(x+1)-R(x)=\log^{2}(x)\hspace{8mm}(R(1)=-\zeta''(0)).
	\end{equation}
	As given in \cite[Remark 2.4]{deninger}, the function $R(x)$ can be analytically continued to $x\in\mathfrak{D'}$. Then from \cite[Equation (2.1)]{dilcher} and \cite[Equation (2.3.1)]{deninger}, it can be observed that Dilcher's $\Gamma_1(x)$ is related to Deninger's $R(x)$ by\footnote{By analytic continuation, Equation (2.3.1) from \cite{deninger} is valid for $x\in\mathfrak{D'}$.}
	\begin{equation*}
		\log(\Gamma_1(x))=\frac{1}{2}(R(x)+\zeta''(0)).
	\end{equation*}
	Languasco and Righi \cite{languasco} have termed $\exp{(R(x))}$ as the \emph{Ramanujan-Deninger gamma function}. For $x>0$, they also give a fast algorithm to compute this function.
	
	Kanemitsu \cite{kanemitsuclosed} extensively studied the higher order analogue of $R(x)$ (briefly alluded to by Deninger in \cite[p.~173]{deninger}), namely, the solution of the difference equation 
	\begin{equation}\label{rx de}
		R_k(x+1)-R_k(x)=\log^{k}(x)\hspace{10mm}(k\in\mathbb{N}\cup\{0\}),
	\end{equation}
	which is convex on some interval $(A, \infty)$, $A>0$,  and satisfies the initial condition $R_k(1)=(-1)^{k+1}\zeta^{(k)}(0)$. In view of \eqref{dendef}, this trivially implies $R_2(x)=R(x)$. He found its Weierstrass representation to be \cite[Equation (2.8)]{kanemitsuclosed}\footnote{the $k$ in front of $\gamma_{k-1}$ is missing in \cite[Equation (2.8)]{kanemitsuclosed}, however, this is corrected in \cite[p.~77]{ishikan}.}
	\begin{equation*}
		R_k(x)=(-1)^{k+1}\zeta^{(k)}(0)-k\gamma_{k-1}x-\log^{k}(x)-\sum_{n=1}^{\infty}\left(\log^{k}(x+n)-\log^k(n)-kx\frac{\log^{k-1}(n)}{n}\right),
	\end{equation*}
	which, in view of \eqref{psikx}, implies \cite[p.~78]{ishikan}
	\begin{equation}\label{psir}
		\psi_k(x)=\frac{1}{k+1}R_{k+1}'(x).	
	\end{equation}
		The functional equation satisfied by $\psi_k(x)$ is \cite[Equation (8.2)]{dilcher}
	\begin{align}\label{dilcher fe}
		\psi_{k}(1+x)=	\psi_{k}(x) +\frac{\log^{k}(x)}{x},
	\end{align}
	which can be easily seen from \eqref{rx de} and \eqref{psir}. It should be noted that $\psi_k(x)$ is analytic in $\mathfrak{D'}$, where $\mathfrak{D'}$ is defined in \eqref{ddash}.
	
	The function $\psi_k(x)$ turns up in a variety of places in number theory. Ishibashi  \cite{ishibashi} evaluated the Laurent series coefficients of a zeta function associated to an indefinite quadratic form in terms of his $k^{\textup{th}}$ order Herglotz function which he defined for $x>0$ by
	\begin{equation*}
		\Phi_k(x):=\sum_{n=1}^{\infty}\frac{k\psi_{k-1}(nx)-\log^{k}(nx)}{n}.
	\end{equation*}
	The Herglotz-Zagier function $\Phi_1(x)$ and its various generalizations such as the above are themselves very useful in number theory, in particular, in Kronecker limit formulas. See \cite{hhf1} and the references therein.  
	
	Recently, Banerjee, Gupta and the first author encountered an infinite series consisting of $\psi(x)$ and $\psi_1(x)$ while obtaining an explicit transformation \cite[Theorem 1.1]{bdg_log} for the Lambert series of logarithm, that is, for $\sum\limits_{n=1}^{\infty}\displaystyle\frac{\log(n)}{e^{ny}-1}$, where Re$(y)>0$. Using this transformation, they found its asymptotic expansion as $y\to0$ and further used it to find the complete asymptotic expansion of the integral
	\begin{equation*}
		\int_{0}^{\infty}\zeta\left(\frac{1}{2}-it\right)\zeta'\left(\frac{1}{2}+it\right)e^{-\delta t}\, dt
	\end{equation*}
	as $\delta\to0$.
	
	In order to be able to state the main goal of this paper, we now turn our attention to a beautiful modular relation found on page $220$ of Ramanujan's Lost Notebook \cite{bcbad}.
	
	Let\footnote{By analytic continuation, the result holds more generally for Re$(\alpha)>0$ and Re$(\beta)>0$.} $\alpha,\ \beta>0$ be such that $\alpha\beta=1$, and, for $x>0$, let
	\begin{equation}\label{ihp}
		\phi(x):=\psi(x)+\frac{1}{2x}-\log(x).
	\end{equation}
	Then Ramanujan claimed that
	\begin{align}\label{w1.26}
		\sqrt{\alpha}\left\{\dfrac{\gamma-\log(2\pi\alpha)}{2\alpha}+\sum_{n=1}^{\i}\phi(n\alpha)\right\}
		&=\sqrt{\beta}\left\{\df{\gamma-\log(2\pi\beta)}{2\beta}+\sum_{n=1}^{\i}\phi(n\beta)\right\}\nonumber\\
		&=-\df{1}{\pi^{3/2}}\int_0^{\i}\left|\Xi\left(\df{1}{2}t\right)\Gamma\left(\df{-1+it}{4}\right)\right|^2
		\df{\cos\left(\tf{1}{2}t\log\alpha\right)}{1+t^2}\, dt,
	\end{align}
	where
	\begin{align*}
		\Xi(t)&=\xi\left(\frac{1}{2}+it\right),\\
		\xi(s)&=\frac{1}{2}s(s-1)\pi^{-\frac{s}{2}}\Gamma\left(\frac{s}{2}\right)\zeta(s),
	\end{align*}
 are Riemann's functions \cite[p.~16]{titch}.
	
	By a \emph{modular relation}, we mean a transformation of the form $F(-1/z)=F(z)$, where $z\in\mathbb{H}$ (the upper-half plane). The $F$ in such a relation may not be governed by $z\to z+1$. Any such transformation can be equivalently put in the form $\mathscr{F}(\alpha)=\mathscr{F}(\beta)$, where $\alpha, \beta$ are such that Re$(\alpha)>0$, Re$(\beta)>0$ and $\alpha\beta=1$. See \cite{dixitms} for more details.
	
	Recently, Gupta and Kumar \cite{gk} showed that \eqref{w1.26} lives in the realms of the theory of Herglotz function $\Phi_1(x)$. For a comprehensive survey on various extensions and generalizations of this transformation, the reader is referred to \cite{dixzah}.
	
	Since $\psi_0(x)=\psi(x)$, a natural question that arises now is, \emph{do the generalized digamma functions $\psi_k(x), k>0$, also satisfy a modular relation of the form in \eqref{w1.26}}? 
	
	We answer this question affirmatively in Theorem \ref{ramanujan generalization psi_k infinite}. The interesting thing here is that for $k>0$, the modular relation involves not just an infinite series of $\psi_k$ but rather a weighted finite sum involving infinite series of $\psi_0, \psi_1, \cdots \psi_k$. Obtaining a modular relation involving just $\psi_k$ for $k>0$ looks inconceivable.  For $k=1$, we also obtain an analogue of the integral involving the Riemann $\Xi(t)$ function in \eqref{w1.26} (see Theorem \ref{ramanujan analogue psi_1 infinite}). In principle, we can obtain such an integral for a general $k>1$ as well, however, we refrain from doing so as the form of the integrand soon becomes unwieldy. 
	
	It is known that the integral in \eqref{w1.26} admits the following asymptotic expansion as $\alpha\to\infty$, namely, \cite[Equation (1.5)]{oloa} (see also \cite[Corollary 1.7]{dixitkumarhrj} with $w=0$ or \cite[Theorem 1.10]{drzacta} with $z=0$)
	\begin{align}\label{ramanujan asymptotic}
		&\dfrac{1}{\pi^{3/2}}\int_0^{\i}\left|\Xi\left(\dfrac{1}{2}t\right)\Gamma\left(\dfrac{-1+it}{4}\right)\right|^2
		\dfrac{\cos\left(\tf{1}{2}t\log\alpha\right)}{1+t^2}\, dt\nonumber\\
		&\sim-\sqrt{\alpha}\left(\dfrac{\gamma-\log(2\pi\alpha)}{2\alpha}\right)-2\sqrt{\alpha}\sum_{k=1}^{\infty}\frac{(-1)^k}{(2\pi\alpha)^{2k}}\Gamma(2k)\zeta^2(2k).
	\end{align}
	In Theorem \ref{asymptotic_xit}, we obtain the corresponding asymptotic expansion of the integral involving the Riemann $\Xi$-function associated with the modular relation involving $\psi(x)$ and $\psi_1(x)$.
	
	In  \cite{apg3}, Guinand obtained modular relations for  higher derivatives of $\psi(x)$. Indeed,  as shown in \cite[Equation (3.24)]{dixitijnt}, Equation (9) from \cite{apg3} can be written in the form
	\begin{align}\label{guigen}
		\alpha^{\frac{z}{2}}\sum_{j=1}^{\infty}\psi^{(z-1)}(1+j\alpha)=\beta^{\frac{z}{2}}\sum_{j=1}^{\infty}\psi^{(z-1)}(1+j\beta),
	\end{align}
	where $\alpha\beta=1$ and $z\in\mathbb{N}, z>2$. Guinand \cite[p.~4]{apg3} also derived the modular relation involving the first derivative of $\psi(x)$ which can be again reformulated in the form
	\begin{equation}\label{guigen1}
		\alpha\sum_{j=1}^{\infty}\left(\psi'(1+j\alpha)-\frac{1}{j\alpha}\right)-\frac{1}{2}\log(\alpha)=	\beta\sum_{j=1}^{\infty}\left(\psi'(1+j\beta)-\frac{1}{j\beta}\right)-\frac{1}{2}\log(\beta),
	\end{equation}
	where, again, $\alpha\beta=1$. 
	
	The second goal of this paper is obtain generalizations of \eqref{guigen} and \eqref{guigen1} for $\psi_k$ for $k>0$. For a natural number $z$ greater than $2$, the generalization of \eqref{guigen} derived in Theorem \ref{guinand gen psi_k z>2} involves the interesting function $h(r)$ which is a sum over all non-negative integer solutions of $1 b_1+...+ (r+1)b_{r+1}= 2r,  b_1+...+b_{r+1}=r$, that is, over all integer partitions of $2r$ into $r$ parts. In fact, this function turns up in a fundamental inversion formula that is of independent interest. See Theorem \ref{Inv}. Even though this formula seems to be equivalent to the inversion formula for the inverse of a triangular Toeplitz matrix \cite[Corollary 1]{merca}, the version we derive here is, to the best of our knowledge, new. Our proof of this inversion formula is also new. In the course of proving results such as Theorem \ref{guinand gen psi_k z>2}, the inversion formula also gives novel results such as (see \eqref{Inv6} below)
		\begin{align*}
		\zeta^{(\ell)}(n,x) &= \sum_{r=0}^{\ell} \frac{(-1)^{\ell+r+1}}{s^{r+1}(n,1)} \frac{\ell!}{(\ell-r)!} h(r) \psi_{\ell-r}^{(n-1)} (x),
	\end{align*}
	where $n\in\mathbb{N}, n>1$, $\ell\in\mathbb{N}\cup\{0\}$, $h(r)$ is defined in \eqref{h(r)}, and $s(n, m)$ denotes the Stirling number of the first kind.
	 The special case $\ell=0$ of this result is well-known; see, for example, \cite[Equation (3.22)]{dixitijnt}.
	
	The generalization of \eqref{guigen1} given in Theorem \ref{curious} corresponds to the ``$z=2$'' case of Theorem \ref{guinand gen psi_k z>2}. Of course, the series in Theorem \ref{guinand gen psi_k z>2} upon letting $z=2$ do not converge, and one has to separately handle this delicate case. This curious case requires intermediate results which are of independent interest, for example, see lemmas \ref{lemma minus two}, \ref{8.2a} and \ref{estimatelog}.
	
	The third goal of this paper is to give finite analogues of all of the above results. Guinand \cite{apg4} obtained some finite identities related to Poisson's summation formula. Carlitz \cite{carlitz0} found an easier way to obtain finite identities of Guinand type by considering a function that satisfies the multiplication formula
	\begin{equation}\label{carlitz1.4}
		\sum_{j=0}^{n-1}f\left(x+\frac{j}{n}\right)=C_nf(nx)\hspace{7mm}(n\in\mathbb{N}),
	\end{equation}
	where $C_n$ is independent of $x$ but may depend on $f$. As shown by Carlitz in one of his other papers \cite{carlitz}, the above equation implies the symmetric formula
	\begin{equation*}
		C_n\sum_{j=0}^{m-1}f\left(nx+\frac{nj}{m}\right)=	C_m\sum_{j=0}^{n-1}f\left(mx+\frac{mj}{n}\right),
	\end{equation*}
	Since $\zeta(z, x)$ satisfies \eqref{carlitz1.4} with $C_n=n^z$, Carlitz derived the transformation \cite[Equation (3.10)]{carlitz0}
	\begin{equation}\label{carlitz relation}
		n^z\sum_{j=1}^{m}\zeta\left(z, nx+\frac{n(j-1)}{m}\right)=	m^z\sum_{j=1}^{n}\zeta\left(z, mx+\frac{m(j-1)}{n}\right).
	\end{equation}
	This transformation is our starting point towards obtaining finite analogues of our aforementioned modular relations involving infinite series of $\psi_k, k\geq0,$ and their derivatives. See Theorems \ref{finitetrans2}, \ref{finitetrans1} and their corollaries.
	
	\section{Main results}\label{mr}
	
	The generalization of \eqref{w1.26}, wherein the digamma function $\psi(x)$ is replaced by the Hurwitz zeta function $\zeta(z, x)$, and which plays a key role in obtaining some of our results, is the following one from \cite[Theorem 1.4]{dixitijnt}:\\
	
	\noindent
	 Let $0<$ \textup{Re} $z<2$. Define $\varphi(z,x)$ by
	\begin{equation*}
		\varphi(z,x):=\zeta(z,x)-\frac{1}{2}x^{-z}+\frac{x^{1-z}}{1-z},
	\end{equation*}
	If $\alpha$ and $\beta$ are such that Re$(\alpha)>0$ and Re$(\beta)>0$\footnote{Even though the result is stated only for positive numbers $\alpha$ and $\beta$, it can be easily seen to be true for Re$(\alpha)>0$ and Re$(\beta)>0$ by analytic continuation.} such that $\alpha\beta=1$, then
	\begin{align}\label{mainneq2}
		\a^{\frac{z}{2}}\left(\sum_{n=1}^{\infty}\varphi(z,n\a)-\frac{\zeta(z)}{2\alpha^{z}}-\frac{\zeta(z-1)}{\alpha (z-1)}\right)&=\b^{\frac{z}{2}}\left(\sum_{n=1}^{\infty}\varphi(z,n\b)-\frac{\zeta(z)}{2\beta^{z}}-\frac{\zeta(z-1)}{\beta (z-1)}\right)\nonumber\\
		&=\frac{8(4\pi)^{\frac{z-4}{2}}}{\Gamma(z)}\int_{0}^{\infty}\omega(z,t)\cos\left(\tf{1}{2}t\log\a\right)\, dt,
	\end{align}
	where
	\begin{equation}\label{omdef}
		\omega(z,t):=\frac{1}{(z^2+t^2)}\Gamma\left(\frac{z-2+it}{4}\right)\Gamma\left(\frac{z-2-it}{4}\right)
		\Xi\left(\frac{t+i(z-1)}{2}\right)\Xi\left(\frac{t-i(z-1)}{2}\right).
	\end{equation}
	The integral in \eqref{mainneq2} was considered by Ramanujan \cite{riemann} who obtained alternate integral representations for it in different regions of the $z$-complex plane. Regarding the special case $z=1$ of this integral, Hardy \cite{ghh} said, \textit{``the properties of this integral resemble those of one which Mr. Littlewood and I have used, in a paper to be published shortly in Acta Mathematica to prove that\footnote{There is a typo in this formula in that $\pi$ should not be present.}}
	\begin{equation*}
		\int_{-T}^{T}\left|\zeta\left(\frac{1}{2}+ti\right)\right|^2\, dt \sim
		\frac{2}{\pi} T\log T\hspace{3mm}(T\to\infty)\textup{''}.
	\end{equation*}
	Now \eqref{hurwitzlaurent} and \eqref{psigamma} suggest that differentiating both sides of the first equality in \eqref{mainneq2} $k$ times with respect to $z$ and then letting $z\to1$ might lead to a modular relation involving the generalized digamma functions. That this is indeed the case is shown in the following sub-section.
	
	\subsection{An extension of Ramanujan's transformation for generalized digamma functions}
	
	Before giving the main result, we record the following asymptotic expansion of $\psi_j(x)$ as $x \to \infty$,  Re$(x)>0$, given in Proposition 3 of \cite{coffey}:
	\begin{align}\label{asymptotic coffey}
		\psi_j(x)\sim\frac{\log^{j+1}(x)}{j+1}-\frac{1}{2x}\log^{j}(x)+\sum_{m=1}^{\infty}\frac{B_{2m}}{(2m)!}x^{-2m}\sum_{t=0}^{j}\binom{j}{t}t!s(2m,t+1)\log^{j-t}(x),
	\end{align}
	where $B_m$ denotes $m^{\textup{th}}$ Bernoulli number.
	\begin{theorem}\label{ramanujan generalization psi_k infinite}
		For \textup{Re}$(x)>0$, define
		\begin{align}\label{ramanujan generalization psi_k infinite eqn}
			\mathscr{F}_k(x):=\sqrt{x} \sum_{j=0}^{k}   (-1)^{j+1} \binom{k}{j} \log^{k-j}(\sqrt{x})  &\left\{\sum_{n=1}^{\infty} \left(\psi_j(nx)+\frac{\log^j(nx)}{2 n x}- \frac{\log^{j+1}(nx)}{j+1}\right) \right. \notag \\
			&\left. \hspace{-5mm}+\sum_{\ell=0}^{j}   \binom{j}{\ell} \frac{\gamma_\ell\log^{j-\ell}(x) }{2  x}  -\frac{\log^{j+1}(x)+ 2(-1)^{j+1} \zeta^{(j+1)}(0)}{2 (j+1) x}	\right\}.
		\end{align}
		Then for any $\alpha$, $\beta$  with \textup{Re}$(\alpha)>0$, \textup{Re}$(\beta)>0$ and $\alpha\beta=1$, we have
		\begin{equation}
				\mathscr{F}_k(\alpha)=	\mathscr{F}_k(\beta). \label{Ramanujan Identity}
		\end{equation}
	\end{theorem}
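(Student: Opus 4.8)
The plan is to start from the Hurwitz-zeta modular relation \eqref{mainneq2}, regard its first equality as an identity between two analytic functions of $z$ on the strip $0<\Re(z)<2$, and then differentiate $k$ times with respect to $z$ and let $z\to1$. Writing
$$G(z,\alpha):=\alpha^{z/2}\left(\sum_{n=1}^{\infty}\varphi(z,n\alpha)-\frac{\zeta(z)}{2\alpha^{z}}-\frac{\zeta(z-1)}{\alpha(z-1)}\right),$$
the relation \eqref{mainneq2} reads $G(z,\alpha)=G(z,\beta)$ whenever $\alpha\beta=1$. The first task is to verify that $G(\cdot,\alpha)$ extends analytically across $z=1$: the Euler--Maclaurin estimate $\varphi(z,x)=O(x^{-1-\Re(z)})$ shows that $\sum_{n}\varphi(z,n\alpha)$ converges locally uniformly on $0<\Re(z)<2$ and hence is analytic there, while the simple poles of $\zeta(z)/(2\alpha^{z})$ and of $\zeta(z-1)/(\alpha(z-1))$ at $z=1$ have opposite residues $\mp\tfrac{1}{2\alpha}$ and cancel. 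Thus $G(\cdot,\alpha)$ is analytic near $z=1$, and one may legitimately differentiate $G(z,\alpha)=G(z,\beta)$ and then set $z=1$.

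Next I would apply the Leibniz rule to $G=\alpha^{z/2}H$, where $H$ is the bracketed factor. Since $\frac{d^{m}}{dz^{m}}\alpha^{z/2}\big|_{z=1}=\sqrt{\alpha}\,\log^{m}(\sqrt{\alpha})$, this gives
$$\frac{d^{k}}{dz^{k}}G(z,\alpha)\Big|_{z=1}=\sum_{j=0}^{k}\binom{k}{j}\sqrt{\alpha}\,\log^{k-j}(\sqrt{\alpha})\,H^{(j)}(1,\alpha),$$
which already reproduces the outer prefactor $\sqrt{x}\binom{k}{j}\log^{k-j}(\sqrt{x})$ appearing in \eqref{ramanujan generalization psi_k infinite eqn}. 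It then remains to identify each $H^{(j)}(1,\alpha)$ with $(-1)^{j+1}$ times the corresponding brace in the theorem.

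For the series part I would expand $\varphi(z,x)$ in powers of $(z-1)$: inserting the Laurent expansion \eqref{hurwitzlaurent} together with $\gamma_{k}(x)=-\psi_{k}(x)$ from \eqref{psigamma}, and the elementary expansions of $x^{-z}$ and $x^{1-z}/(1-z)$, yields
$$\varphi(z,x)=\sum_{j=0}^{\infty}\frac{(z-1)^{j}}{j!}(-1)^{j+1}\left(\psi_{j}(x)+\frac{\log^{j}x}{2x}-\frac{\log^{j+1}x}{j+1}\right),$$
so that the $j$-th derivative of $\sum_{n}\varphi(z,n\alpha)$ at $z=1$ is exactly $(-1)^{j+1}$ times the infinite series in \eqref{ramanujan generalization psi_k infinite eqn} (termwise differentiation being justified by the local uniform convergence noted above). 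For the singular part $S(z,\alpha):=-\zeta(z)/(2\alpha^{z})-\zeta(z-1)/(\alpha(z-1))$, I would form the Cauchy product of the Laurent series of $\zeta(z)$ with the Taylor series of $\alpha^{-z}$, and of $\zeta(z-1)$ with $1/(z-1)$, confirm the cancellation of the $1/(z-1)$ terms, and read off the coefficient of $(z-1)^{j}$. Expressed through $\gamma_{\ell}$ and $\zeta^{(j+1)}(0)$, this coefficient equals $(-1)^{j+1}$ times precisely the remaining terms $\sum_{\ell=0}^{j}\binom{j}{\ell}\gamma_{\ell}\log^{j-\ell}(\alpha)/(2\alpha)-\big(\log^{j+1}\alpha+2(-1)^{j+1}\zeta^{(j+1)}(0)\big)/(2(j+1)\alpha)$ in the brace. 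Assembling the two parts gives $\frac{d^{k}}{dz^{k}}G(z,\alpha)\big|_{z=1}=\mathscr{F}_{k}(\alpha)$, and the identical computation with $\beta$ in place of $\alpha$, combined with $G(z,\alpha)=G(z,\beta)$, yields \eqref{Ramanujan Identity}.

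The main obstacle is twofold and essentially analytic and bookkeeping rather than conceptual. First, one must rigorously justify differentiating the infinite series $\sum_{n}\varphi(z,n\alpha)$ termwise $k$ times at $z=1$; this requires the decay estimate $\varphi(z,x)=O(x^{-1-\Re(z)})$ to persist, uniformly, on a small disk about $z=1$, which can be secured from the asymptotic expansion \eqref{asymptotic coffey} of $\psi_{j}$ together with Cauchy's estimates for Taylor coefficients of analytic functions. Second, the Cauchy product for $S(z,\alpha)$ must be handled carefully so that the pole cancellation is exact and the signs of $(-1)^{j+1}$ match, ensuring that the $\zeta^{(j+1)}(0)$ and $\gamma_{\ell}$ terms emerge in the precise form displayed in \eqref{ramanujan generalization psi_k infinite eqn}.
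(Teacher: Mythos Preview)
Your proposal is correct and follows the same strategy as the paper: differentiate the first equality of \eqref{mainneq2} $k$ times with respect to $z$, apply the Leibniz rule to $\alpha^{z/2}F(z,\alpha)$, and evaluate at $z=1$. Your execution is somewhat more direct than the paper's --- you read off Taylor coefficients of $\varphi(z,x)$ and of the singular part about $z=1$ from known Laurent expansions, whereas the paper differentiates first and then resolves the resulting indeterminate forms by repeated L'H\^{o}pital, showing that certain auxiliary expressions $S_3,S_4+S_5$ have zeros of order $j+1$ at $z=1$ --- but the underlying argument is the same.
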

	When $k=1$, we explicitly obtain and state in the following theorem the integral containing the Riemann $\Xi$-function equal to each of the expressions in the above modular relation.
	\begin{theorem}\label{ramanujan analogue psi_1 infinite}
		Let \textup{Re}$(x)>0$. Let $\phi(x)$ be defined in \eqref{ihp}. Moreover, define 
		\begin{align*}
			\phi_1(x)&:= \psi_1(x) + \frac{1}{2x}\log(x)- \frac{1}{2}\log^2(x),
		\end{align*}
		\begin{align}\label{ef1}
				\mathscr{F}_1(x)&:=\sqrt{x} \left\{ \sum_{n=1}^{\infty} \phi_1(nx) + \frac{\log^2(2\pi)-(\gamma-\log(x))^2}{4x} + \frac{\pi^2}{48x} \right\} -\frac{\sqrt{x} \log(x)}{2} \left\{ \sum_{n=1}^{\infty} \phi(nx) +\frac{\gamma - \log (2\pi x)}{2x} \right\},
		\end{align}
		and
		\begin{align}\label{mathscr i alpha}
			\mathscr{I}(x)&:= \frac{2}{(4 \pi)^{\frac{3}{2}}}  \int_{0}^{\infty} \left| \Gamma\left(\frac{-1+i t}{4}\right)  \Xi\left(\frac{t}{2} \right) \right|^2  \notag\\
			&\qquad\qquad\quad\times\left\{  \psi \left( \frac{-1+it}{4} \right) +  \psi \left( \frac{-1-it}{4} \right) - \frac{8}{1+t^2} + 2\log(4\pi) + 4\gamma \right\} \frac{\cos\left(\frac{1}{2}t \log x\right)}{1+t^2} dt.
		\end{align}
		For $\alpha,\beta $ such that \textup{Re}$(\alpha)>0$, \textup{Re}$(\beta)>0$ and $\alpha \beta =1$, we have
		\begin{align}\label{k=1}
				\mathscr{F}_1(\alpha)=	\mathscr{F}_1(\beta)=\mathscr{I}(\alpha)=\mathscr{I}(\beta).
		\end{align}
	\end{theorem}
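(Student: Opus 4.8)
The plan is to deduce \eqref{k=1} from the Hurwitz-zeta transformation \eqref{mainneq2} by differentiating once with respect to $z$ and then letting $z\to1$, which is the case $k=1$ of the heuristic described just before Theorem \ref{ramanujan generalization psi_k infinite}. The four asserted equalities split naturally. The first equality $\mathscr{F}_1(\alpha)=\mathscr{F}_1(\beta)$ is precisely the case $k=1$ of Theorem \ref{ramanujan generalization psi_k infinite}, once one checks that the compact expression \eqref{ef1} agrees with the specialization of \eqref{ramanujan generalization psi_k infinite eqn}; the remaining equalities identify each side with the $\Xi$-integral \eqref{mathscr i alpha}.

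First I would record the two Taylor coefficients of $\varphi(z,x)$ at $z=1$ that drive everything. Expanding $\zeta(z,x)$ through \eqref{hurwitzlaurent} and \eqref{psigamma}, and expanding $-\tfrac12 x^{-z}$ and $x^{1-z}/(1-z)$ as power series in $z-1$, the simple poles of $\zeta(z,x)$ and of $x^{1-z}/(1-z)$ cancel, so $\varphi(z,x)$ is analytic at $z=1$ with $\varphi(1,x)=-\phi(x)$ and $\partial_z\varphi(1,x)=\phi_1(x)$, where $\phi$ and $\phi_1$ are as in the statement. Writing the bracket on the left of \eqref{mainneq2} as $\alpha^{z/2}H(z)$, term-by-term differentiation then gives $H(1)=-\sum_{n\ge1}\phi(n\alpha)+(\cdots)$ and $H'(1)=\sum_{n\ge1}\phi_1(n\alpha)+(\cdots)$. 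The finite corrections come from expanding $\zeta(z)/(2\alpha^{z})$ and $\zeta(z-1)/(\alpha(z-1))$ about $z=1$: their simple poles again cancel, and using $\zeta'(0)=-\tfrac12\log(2\pi)$ for $H(1)$ together with the closed form $\zeta''(0)=\gamma_1+\tfrac12\gamma^2-\tfrac{\pi^2}{24}-\tfrac12\log^2(2\pi)$ for $H'(1)$, the quantity $\tfrac{d}{dz}\big(\alpha^{z/2}H(z)\big)\big|_{z=1}=\sqrt{\alpha}\big(\tfrac12\log\alpha\,H(1)+H'(1)\big)$ collapses exactly to $\mathscr{F}_1(\alpha)$ in \eqref{ef1}. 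The same $\zeta''(0)$ identity is what reconciles \eqref{ef1} with the $k=1$ case of \eqref{ramanujan generalization psi_k infinite eqn}, so that $\mathscr{F}_1(\alpha)=\mathscr{F}_1(\beta)$ follows from Theorem \ref{ramanujan generalization psi_k infinite}.

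Next I would differentiate the $\Xi$-integral on the right of \eqref{mainneq2}. Taking the logarithmic derivative of $\omega(z,t)$ in \eqref{omdef}, the two $\Xi$-factors contribute $\pm\tfrac{i}{2}(\Xi'/\Xi)$, which cancel at $z=1$; the factor $1/(z^2+t^2)$ contributes $-2/(1+t^2)$; and the two $\Gamma$-factors contribute $\tfrac14\psi\big(\tfrac{-1+it}{4}\big)+\tfrac14\psi\big(\tfrac{-1-it}{4}\big)$ by the chain rule. Differentiating the prefactor $8(4\pi)^{(z-4)/2}/\Gamma(z)$ adds $\tfrac12\log(4\pi)-\psi(z)$, equal at $z=1$ to $\tfrac12\log(4\pi)+\gamma$. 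Since $\omega(1,t)=|\Gamma(\tfrac{-1+it}{4})|^2\,|\Xi(\tfrac t2)|^2/(1+t^2)$ and $8(4\pi)^{-3/2}\cdot\tfrac14=2(4\pi)^{-3/2}$, pulling out the common factor $\tfrac14$ turns the bracket into exactly $\psi(\tfrac{-1+it}{4})+\psi(\tfrac{-1-it}{4})-\tfrac{8}{1+t^2}+2\log(4\pi)+4\gamma$, which is $\mathscr{I}(\alpha)$. This proves $\mathscr{F}_1(\alpha)=\mathscr{I}(\alpha)$; the identity $\mathscr{F}_1(\beta)=\mathscr{I}(\beta)$ follows verbatim with $\beta$ in place of $\alpha$, while $\mathscr{I}(\alpha)=\mathscr{I}(\beta)$ is immediate because $\log\beta=-\log\alpha$ and cosine is even.

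The main obstacle is analytic rather than algebraic: justifying the two interchanges of differentiation with summation and with integration. For the series, the Euler--Maclaurin estimate $\varphi(z,n\alpha)=O\big(n^{-\Re(z)-1}\big)$ (consistent with the asymptotics \eqref{asymptotic coffey}) yields local uniform convergence of $\sum_n\varphi(z,n\alpha)$ and of its $z$-derivative near $z=1$, legitimizing term-by-term differentiation and the emergence of $\sum_n\phi$ and $\sum_n\phi_1$. For the integral, Stirling's formula forces the factor $\Gamma(\tfrac{z-2+it}{4})\Gamma(\tfrac{z-2-it}{4})$ to decay exponentially in $t$, dominating the polynomial growth of the $\Xi$-factors and of the newly produced digamma factors, so differentiation under the integral sign is valid in a neighborhood of $z=1$. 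The remaining care is purely bookkeeping: tracking the cancellation of the simple poles on each side and verifying the two closed-form constants for $\zeta'(0)$ and $\zeta''(0)$.
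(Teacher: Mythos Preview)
Your proposal is correct and follows essentially the same route as the paper: differentiate \eqref{mainneq2} once in $z$, let $z\to1$, compute the series side via the Laurent expansion at $z=1$ (the paper does this by invoking the general formula \eqref{lastlimit} from the proof of Theorem~\ref{ramanujan generalization psi_k infinite}, whereas you re-derive the $k=1$ case directly through the Taylor coefficients $\varphi(1,x)=-\phi(x)$, $\partial_z\varphi(1,x)=\phi_1(x)$), and handle the integral side via the logarithmic derivative of $\omega(z,t)$ exactly as the paper does. The justification of the two interchanges and the use of the closed forms for $\zeta'(0)$ and $\zeta''(0)$ also match the paper's treatment.
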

	\begin{remark}
	For the expression on the right-hand side of \eqref{ef1}, we use the notation $\mathscr{F}_1$ because it is the $k=1$ case of the function $\mathscr{F}_k$ defined in \eqref{ramanujan generalization psi_k infinite eqn}, as can be seen by employing \eqref{zeta''0}.
	\end{remark}
	\subsection{Asymptotic expansion of a new $\Xi$-function integral}
	
	\begin{theorem}\label{asymptotic_xit}
		Let $\mathscr{I}(\alpha)$ be defined in \eqref{mathscr i alpha}. Let $\mathfrak{D}=\{\eta\in\mathbb{C}: |\arg(\eta)|<\pi/2\}$.  As $\alpha\to\infty$ along any path in $\mathfrak{D}$,
		\begin{align}\label{alpha infinity}
			\mathscr{I}(\alpha)\sim& \frac{-1}{4\sqrt{\alpha}} \left( \gamma + \log(2\pi) \right) \left( \gamma- \log(2\pi\alpha) \right) +\frac{\pi^2}{48\sqrt{\alpha}}\notag \\
			&+ 2\sqrt{\alpha} \sum_{m=0}^{\infty} \frac{(-1)^{m} \Gamma(2m+2) \zeta^2(2m+2)}{(2\pi\alpha)^{2m+2}} \left(- \frac{1}{2}\log(\alpha) +\gamma+ \psi(2m+2) + \frac{\zeta'(2m+2)}{\zeta(2m+2)} \right).
		\end{align}
		Moreover, the invariance $\mathscr{I}(\alpha)=\mathscr{I}(1/\alpha)$ shows that as $\alpha\to0$, $\mathscr{I}(\alpha)$ is asymptotic  to the expression obtained by replacing $\alpha$ on the right-hand side of \eqref{alpha infinity} by $1/\alpha$.
	\end{theorem}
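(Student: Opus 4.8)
The plan is to reduce everything to an asymptotic analysis of $\mathscr{F}_1$. By \eqref{k=1} we have $\mathscr{I}(\alpha)=\mathscr{F}_1(\alpha)$, and this holds throughout $\mathfrak{D}$, since $\mathfrak{D}=\{|\arg(\eta)|<\pi/2\}$ is exactly the half-plane $\textup{Re}(\eta)>0$ on which Theorem \ref{ramanujan analogue psi_1 infinite} is valid. So it suffices to expand $\mathscr{F}_1(\alpha)$, as defined in \eqref{ef1}, as $\alpha\to\infty$. The function splits into a $\phi_1$-part and a $\phi$-part, and I would treat the two convergent series $\sum_{n\ge1}\phi_1(n\alpha)$ and $\sum_{n\ge1}\phi(n\alpha)$ separately. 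For the $\phi$-part nothing new is needed: combining \eqref{w1.26} with the known expansion \eqref{ramanujan asymptotic} gives at once $\sum_{n=1}^{\infty}\phi(n\alpha)\sim 2\sum_{m\ge1}(-1)^{m}\Gamma(2m)\zeta^{2}(2m)(2\pi\alpha)^{-2m}$, so the entire quantity $-\tfrac{\sqrt{\alpha}\log\alpha}{2}\big(\sum_{n}\phi(n\alpha)+\tfrac{\gamma-\log(2\pi\alpha)}{2\alpha}\big)$ is already known asymptotically. (Conceptually, $\mathscr{F}_1$ is the $z$-derivative at $z=1$ of the $\varphi$-transform in \eqref{mainneq2}, which is what forces this squared-zeta shape; but I would run the argument directly from \eqref{ef1}.)

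The real work is the $\phi_1$-part. First I would insert Coffey's expansion \eqref{asymptotic coffey} with $j=1$ into $\phi_1(x)=\psi_1(x)+\tfrac{1}{2x}\log(x)-\tfrac12\log^{2}(x)$; the two elementary pieces cancel the first two terms of \eqref{asymptotic coffey}, leaving $\phi_1(x)\sim\sum_{m\ge1}\frac{B_{2m}}{(2m)!}x^{-2m}\big(-(2m-1)!\log(x)+s(2m,2)\big)$, where I used $s(2m,1)=-(2m-1)!$. Putting $x=n\alpha$, writing $\log(n\alpha)=\log n+\log\alpha$, and summing over $n$ then introduces the convergent sums $\sum_n n^{-2m}=\zeta(2m)$ and $\sum_n n^{-2m}\log n=-\zeta'(2m)$. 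Converting the Bernoulli numbers through $\frac{B_{2m}}{(2m)!}=\frac{2(-1)^{m+1}\zeta(2m)}{(2\pi)^{2m}}$ turns every surviving $\zeta(2m)$-factor into $\zeta^{2}(2m)$, producing precisely the squared-zeta series of \eqref{alpha infinity}.

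The remaining point is to match coefficients after the reindexing $m\mapsto m+1$. The $\zeta(2m)\zeta'(2m)$ terms give the $\zeta'(2m+2)/\zeta(2m+2)$ contribution; the $\log\alpha$ generated inside $\sum_n\phi_1(n\alpha)$ (with coefficient $-2$) combines with the $\log\alpha$ from the $\phi$-part (coefficient $+1$) to produce the $-\tfrac12\log(\alpha)$ of the bracket in \eqref{alpha infinity}; and the $s(2m,2)$ terms are handled by the classical evaluation $s(2m,2)=(2m-1)!\,H_{2m-1}$ together with $\psi(2m)=-\gamma+H_{2m-1}$, giving exactly $(2m-1)!\,(\gamma+\psi(2m))\,\zeta^{2}(2m)$, i.e. the $\gamma+\psi(2m+2)$ part of the bracket. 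Finally the elementary terms of \eqref{ef1}, namely $\frac{\log^{2}(2\pi)-(\gamma-\log\alpha)^{2}}{4\alpha}+\frac{\pi^{2}}{48\alpha}$ together with $-\frac{\log\alpha}{2}\cdot\frac{\gamma-\log(2\pi\alpha)}{2\alpha}$, combine to give the non-series part $-\frac{1}{4\sqrt{\alpha}}(\gamma+\log(2\pi))(\gamma-\log(2\pi\alpha))+\frac{\pi^{2}}{48\sqrt{\alpha}}$; no value of $\zeta''(0)$ is needed at this stage, since it has already been absorbed into the $\pi^{2}/48$ of \eqref{ef1} via \eqref{zeta''0}.

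The main obstacle is rigor rather than bookkeeping: \eqref{asymptotic coffey} is an asymptotic, not a convergent, expansion, so interchanging $\sum_n$ with the $m$-expansion of $\psi_1(n\alpha)$ must be justified. I would do this in the standard way—truncate \eqref{asymptotic coffey} at order $m=M$ with an explicit remainder $R_M(x)=O(x^{-2M-2}\log|x|)$ that holds \emph{uniformly} on each subsector $|\arg(x)|\le\pi/2-\Delta$, then sum over $n$ and estimate $\sum_n R_M(n\alpha)=O(\alpha^{-2M-2}\log\alpha)$, which is of the next order and hence negligible. Securing this uniformity of Coffey's expansion on sectors, so that the conclusion is valid along any path in $\mathfrak{D}$, is the delicate step. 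Once the $\alpha\to\infty$ expansion is established, the statement as $\alpha\to0$ follows with no further computation from the invariance $\mathscr{I}(\alpha)=\mathscr{I}(1/\alpha)$ contained in \eqref{k=1}, applied to $1/\alpha\to\infty$.
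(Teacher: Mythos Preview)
Your proposal is correct and takes a genuinely different route from the paper. The paper does not touch Coffey's expansion \eqref{asymptotic coffey} at all in proving Theorem \ref{asymptotic_xit}. Instead it starts from the Laplace-transform identity \eqref{omegaXi2}, which expresses $\tfrac{\Gamma(z+1)}{(2\pi)^{z+1}}J(z+1,\alpha)$ as $\alpha^{(z+1)/2}\int_0^\infty e^{-2\pi\alpha x}x^{z/2}\bigl(\Omega(x,z)-\tfrac{1}{2\pi}\zeta(z)x^{z/2-1}\bigr)\,dx$; it differentiates in $z$, lets $z\to0$, and then breaks the resulting integral into four pieces $H_1,\dots,H_4$ whose $\alpha\to\infty$ asymptotics are extracted via Watson's lemma (Theorem \ref{watlem}) and its Wong--Wyman extension (Theorem \ref{watsongen}). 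The passage back to $\mathscr{I}(\alpha)$ goes through \eqref{used in asymptote} and the already known expansion \eqref{RHSknownasy} of $J(1,\alpha)$. In particular, the paper \emph{derives} the expansion \eqref{infinity alpha} of $\sum_n\phi_1(n\alpha)$ as a corollary of Theorem \ref{asymptotic_xit}, rather than using it as input.

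Your approach reverses this logic: you obtain the expansion of $\sum_n\phi_1(n\alpha)$ directly from \eqref{asymptotic coffey} and then read off the asymptotics of $\mathscr{I}(\alpha)$ from $\mathscr{I}(\alpha)=\mathscr{F}_1(\alpha)$. This is more elementary in that it completely avoids the $\Omega(x,z)$ integral representation, the Bessel-function machinery imported from \cite{drzacta}, and both forms of Watson's lemma. The price is the one you already flagged: you need the remainder in \eqref{asymptotic coffey} to be uniform on subsectors $|\arg(x)|\le\pi/2-\Delta$ so that $\sum_n R_M(n\alpha)=O(\alpha^{-2M-2}\log\alpha)$. The paper in fact confirms this uniformity in its concluding remarks (via the Ishibashi integral for $\psi_j$ and \cite[Theorem 2.2]{temme2015}), so your delicate step is available, and your argument goes through.
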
	
	The above result has a nice application in that it can be used to get the asymptotic behavior of $ \sum_{n=1}^{\infty} \phi_1(nx) $ as $x\to0$ and $x\to\infty$, which is recorded in the following corollary.
	\begin{corollary}\label{phi1 asymptotic}
	Let $\phi_1(x)$ and $\mathfrak{D}$  be defined in Theorems \ref{ramanujan analogue psi_1 infinite} and \ref{asymptotic_xit} respectively. As $\alpha\to0$ along any path in $\mathfrak{D}$,
	\begin{align}\label{alpha zero}
		\sum_{n=1}^{\infty}\phi_1(n\alpha)&\sim\frac{\pi^2}{48}\left(1-\frac{1}{\alpha}\right)-\frac{1}{4\alpha}\left(\gamma+\log\left(\tfrac{2\pi}{\alpha}\right)\right)\left(\alpha\left(\gamma-\log\left(\tfrac{2\pi}{\alpha}\right)\right)-\gamma+\log(2\pi\alpha)\right)\notag\\
		&\quad+2\sum_{m=0}^{\infty} \frac{(-1)^{m} \Gamma(2m+2) \zeta^2(2m+2)}{(2\pi)^{2m+2}} \left(\gamma+ \psi(2m+2) + \frac{\zeta'(2m+2)}{\zeta(2m+2)} \right)\alpha^{2m+1}.
	\end{align}
	Also, as $\alpha\to\infty$ along any path in $\mathfrak{D}$,
	\begin{align}\label{infinity alpha}
	\sum_{n=1}^{\infty}\phi_1(n\alpha)\sim2 \sum_{m=0}^{\infty} \frac{(-1)^{m} \Gamma(2m+2) \zeta^2(2m+2)}{(2\pi\alpha)^{2m+2}} \left(-\log(\alpha) +\gamma+ \psi(2m+2) + \frac{\zeta'(2m+2)}{\zeta(2m+2)} \right).
	\end{align}
	\end{corollary}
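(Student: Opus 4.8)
The plan is to treat the modular relation of Theorem~\ref{ramanujan analogue psi_1 infinite} as an \emph{equation} for the unknown series $\sum_{n=1}^{\infty}\phi_1(n\alpha)$ and then feed in the already-established asymptotics of the two ingredients appearing on the right. Concretely, the chain $\mathscr{F}_1(\alpha)=\mathscr{I}(\alpha)$ in \eqref{k=1}, together with the definition \eqref{ef1} of $\mathscr{F}_1$, can be solved for the target series to give
\begin{equation*}
\sum_{n=1}^{\infty}\phi_1(n\alpha)=\frac{\mathscr{I}(\alpha)}{\sqrt{\alpha}}-\frac{\log^2(2\pi)-(\gamma-\log(\alpha))^2}{4\alpha}-\frac{\pi^2}{48\alpha}+\frac{\log(\alpha)}{2}\left(\sum_{n=1}^{\infty}\phi(n\alpha)+\frac{\gamma-\log(2\pi\alpha)}{2\alpha}\right),
\end{equation*}
with $\phi$ as in \eqref{ihp}. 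Thus everything reduces to the asymptotics of $\mathscr{I}(\alpha)$ and of the Ramanujan combination $S_0(\alpha):=\sum_{n=1}^{\infty}\phi(n\alpha)+(\gamma-\log(2\pi\alpha))/(2\alpha)$. The former is exactly Theorem~\ref{asymptotic_xit}; for the latter, combining \eqref{w1.26} with the expansion \eqref{ramanujan asymptotic} gives $S_0(\alpha)\sim \frac{\gamma-\log(2\pi\alpha)}{2\alpha}+2\sum_{k=1}^{\infty}\frac{(-1)^k}{(2\pi\alpha)^{2k}}\Gamma(2k)\zeta^2(2k)$ as $\alpha\to\infty$.

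For the case $\alpha\to\infty$ (equation~\eqref{infinity alpha}) I would substitute these two expansions directly into the displayed identity. After reindexing the $\zeta^2$-series of $S_0$ by $k=m+1$ so that it runs over $m\ge0$ like the series in \eqref{alpha infinity}, the term $\tfrac12\log(\alpha)\,S_0(\alpha)$ contributes an extra $-\log(\alpha)$ to each coefficient; adding this to the bracket $2\bigl(-\tfrac12\log(\alpha)+\gamma+\psi(2m+2)+\zeta'(2m+2)/\zeta(2m+2)\bigr)$ of \eqref{alpha infinity} produces precisely $2\bigl(-\log(\alpha)+\gamma+\psi(2m+2)+\zeta'(2m+2)/\zeta(2m+2)\bigr)$, the series in \eqref{infinity alpha}. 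The remaining terms are proportional to $1/\alpha$ (the two $\pi^2/(48\alpha)$ pieces cancel outright), and writing $L=\log(\alpha)$, $P=\log(2\pi)$ a short expansion shows the five contributions sum to $0$, leaving only the series.

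For $\alpha\to0$ (equation~\eqref{alpha zero}) the idea is the same, but I would exploit the invariances to pass to the regime $1/\alpha\to\infty$ where the expansions hold. In the displayed identity I replace $\mathscr{I}(\alpha)$ by $\mathscr{I}(1/\alpha)$ (Theorem~\ref{asymptotic_xit}) and use the $\phi$-modular relation \eqref{w1.26} in the form $\sqrt{\alpha}\,S_0(\alpha)=\sqrt{1/\alpha}\,S_0(1/\alpha)$, that is $S_0(\alpha)=\alpha^{-1}S_0(1/\alpha)$, expanding $S_0(1/\alpha)$ by the preceding $\alpha\to\infty$ expansion. The $\sqrt{\alpha}$ bookkeeping is the only delicate point: the term $\pi^2/(48\sqrt{1/\alpha})$ in $\mathscr{I}(1/\alpha)$, divided by $\sqrt{\alpha}$, produces a \emph{constant} $\pi^2/48$ rather than a $1/\alpha$ term, which together with the explicit $-\pi^2/(48\alpha)$ yields the prefactor $\tfrac{\pi^2}{48}(1-1/\alpha)$ of \eqref{alpha zero}. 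As before, combining $-\tfrac12\log(1/\alpha)=\tfrac12\log(\alpha)$ from $\mathscr{I}(1/\alpha)$ with the $-\log(\alpha)$ coming from $\tfrac12\log(\alpha)\,S_0(\alpha)$ leaves the clean bracket $\gamma+\psi(2m+2)+\zeta'(2m+2)/\zeta(2m+2)$. With $L=\log(\alpha)$, $P=\log(2\pi)$ the residual algebraic terms collect into $\tfrac14\bigl((P-L)^2-\gamma^2\bigr)-\tfrac{1}{4\alpha}\bigl(P^2-(\gamma-L)^2\bigr)$, which is exactly the expansion of the product in \eqref{alpha zero} once one reads $\log(2\pi/\alpha)=P-L$ and $\log(2\pi\alpha)=P+L$.

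The main obstacle is not conceptual but the disciplined algebra of the final step in each case: verifying that the accumulated non-series terms---several products of $\gamma$, $\log(\alpha)$ and $\log(2\pi)$ weighted by $1/\alpha$ or by constants---collapse to the compact closed forms in \eqref{infinity alpha} and \eqref{alpha zero}. One should also confirm that the term-by-term substitution is legitimate, i.e.\ that multiplying a genuine asymptotic expansion by $\log(\alpha)$ and by the factor $1/\sqrt{\alpha}$ and then reindexing preserves the asymptotic relation along paths in $\mathfrak{D}$; this is routine, since all manipulations are finite linear combinations and reindexings of the expansions supplied by Theorem~\ref{asymptotic_xit} and \eqref{w1.26}--\eqref{ramanujan asymptotic}.
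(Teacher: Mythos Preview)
Your proposal is correct and follows essentially the same approach as the paper: solve $\mathscr{F}_1(\alpha)=\mathscr{I}(\alpha)$ for the series $\sum\phi_1(n\alpha)$, then substitute the asymptotics of $\mathscr{I}(\alpha)$ from Theorem~\ref{asymptotic_xit} and of the Ramanujan combination from \eqref{w1.26}--\eqref{ramanujan asymptotic}, using the invariances $\mathscr{I}(\alpha)=\mathscr{I}(1/\alpha)$ and $\sqrt{\alpha}\,S_0(\alpha)=\sqrt{\beta}\,S_0(\beta)$ for the $\alpha\to0$ case. The only difference is cosmetic: the paper compresses the algebraic verification into ``upon simplification,'' whereas you spell out the reindexing and the cancellation of the non-series terms.
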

	\subsection{Carlitz-type transformations}
	
	\begin{theorem} \label{finitetrans2}
		Let $\mathfrak{D'}$ be defined in \eqref{ddash}. 
		For $x\in\mathfrak{D'}$, $k\in\mathbb{N}\cup\{0\}$ and $m,n \in \mathbb{N}$,
		\begin{align} \label{finitetrans2_eqn}
			&n \left\{  \sum_{\ell=0}^{k}  (-1)^{\ell} \binom{k}{\ell} \log^{k-\ell}(n)  \sum_{j=1}^{m} \psi_\ell \left(nx + \frac{n(j-1)}{m}\right) - \frac{1}{k+1} m \log^{k+1}(n) \right\} \notag\\
			=&m \left\{ \sum_{\ell=0}^{k} (-1)^{\ell} \binom{k}{\ell} \log^{k-\ell}(m)  \sum_{j=1}^{n}  \psi_\ell\left(mx + \frac{m(j-1)}{n}\right) - \frac{1}{k+1} n \log^{k+1}(m) \right\}.
		\end{align}
	\end{theorem}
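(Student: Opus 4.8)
The plan is to extract the Laurent coefficients about $z=1$ on both sides of Carlitz's relation \eqref{carlitz relation}. That relation is an equality of meromorphic functions of $z$, each side being a finite sum of Hurwitz zeta functions and hence carrying a single simple pole at $z=1$; consequently the two Laurent expansions about $z=1$ must agree coefficient by coefficient, and it suffices to compute and equate the coefficient of $(z-1)^k$.

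First I would record the two ingredient expansions. From \eqref{hurwitzlaurent} together with $\gamma_q(a)=-\psi_q(a)$ (see \eqref{psigamma}), we have, for $a\in\mathfrak{D'}$,
\begin{equation*}
\zeta(z,a)=\frac{1}{z-1}+\sum_{q=0}^{\infty}\frac{(-1)^{q+1}\psi_q(a)}{q!}(z-1)^q,
\end{equation*}
while the entire function $n^z$ expands as $n^z=n\sum_{p=0}^{\infty}\frac{\log^p(n)}{p!}(z-1)^p$. Forming the Cauchy product of these two series yields the Laurent expansion of $n^z\zeta(z,a)$ about $z=1$; for $k\ge 0$ the coefficient of $(z-1)^k$ equals
\begin{equation*}
n\,\frac{\log^{k+1}(n)}{(k+1)!}+n\sum_{\ell=0}^{k}\frac{(-1)^{\ell+1}\log^{k-\ell}(n)}{(k-\ell)!\,\ell!}\,\psi_\ell(a),
\end{equation*}
where the first term arises from multiplying the Taylor series of $n^z$ against the pole $1/(z-1)$ (taking the index $p=k+1$), and the second from the regular part.

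Next I would set $a=nx+n(j-1)/m$, sum over $1\le j\le m$, and read off the coefficient of $(z-1)^k$ on the left of \eqref{carlitz relation}. Multiplying this coefficient by $-k!$ and using $k!/(k+1)!=1/(k+1)$ together with $(-1)^{\ell+1}(-k!)/\big((k-\ell)!\,\ell!\big)=(-1)^{\ell}\binom{k}{\ell}$ turns it precisely into $n$ times the expression inside the braces on the left of \eqref{finitetrans2_eqn}. The right side of \eqref{carlitz relation} is treated identically with the roles of $m$ and $n$ interchanged, producing the right side of \eqref{finitetrans2_eqn}; equating the two then yields the theorem.

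I do not anticipate a genuine difficulty here: the only points that need care are the bookkeeping in the Cauchy product and the recognition of the binomial coefficient after the multiplication by $-k!$. As a consistency check one may verify that the principal parts agree, namely that the coefficient of $(z-1)^{-1}$ is $mn$ on each side, reflecting the common simple pole of residue $mn$ at $z=1$; this confirms that the comparison of the regular parts is legitimate. Finally, one notes that for $x\in\mathfrak{D'}$ and $j,m,n\in\mathbb{N}$ the arguments $nx+n(j-1)/m$ again lie in $\mathfrak{D'}$, so every $\psi_\ell$ appearing is well defined.
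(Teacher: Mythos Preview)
Your proof is correct and rests on the same idea as the paper's---extracting the $(z-1)^k$-coefficient of Carlitz's identity \eqref{carlitz relation}---but your execution is considerably more economical. The paper differentiates \eqref{carlitz relation} $k$ times, then subtracts off the singular parts on each side, collects them into a quotient $R(z)/(z-1)^{k+1}$, proves via a binomial calculation that $R$ vanishes to order $k+1$ at $z=1$, and finally computes the limit by L'H\^opital's rule before invoking \eqref{hurwitz_psi_j}. You instead write down the Laurent expansion of $n^z\zeta(z,a)$ directly as the Cauchy product of the Taylor series of $n^z$ with the Laurent series \eqref{hurwitzlaurent}, read off the coefficient of $(z-1)^k$, and normalize by $-k!$ to obtain the stated form. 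This bypasses the L'H\^opital step and the verification that $R^{(i)}(1)=0$ for $0\le i\le k$, replacing them with a one-line Cauchy product; the price is only the (trivial) observation that the principal parts agree, which you note as the residue check $mn=mn$. Both arguments are equivalent in content, but yours is the more transparent route to the same destination.
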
 
	
	\begin{corollary} \label{carlitzcorollary}
		Let $x\in\mathfrak{D'}$. For any $m, n \in \mathbb{N}$,
		\begin{align*}
			n \left[ \sum_{j=1}^{m} \psi \left(nx + \frac{n(j-1)}{m}\right) + m\log (m) \right] =m  \left[ \sum_{j=1}^{n} \psi  \left(mx + \frac{m(j-1)}{n}\right) + n \log (n) \right].
		\end{align*} 
	\end{corollary}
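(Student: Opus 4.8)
The plan is to obtain Corollary \ref{carlitzcorollary} as the $k=0$ specialization of Theorem \ref{finitetrans2}. Setting $k=0$ in \eqref{finitetrans2_eqn}, the inner sum over $\ell$ collapses to its single term $\ell=0$, for which $\binom{0}{0}=1$, $(-1)^0=1$ and $\log^{0}(n)=1$; recalling from \eqref{psikx} that $\psi_0=\psi$, and that $\tfrac{1}{k+1}\log^{k+1}(n)=\log(n)$ when $k=0$, the identity reduces to
\begin{equation*}
	n\left\{\sum_{j=1}^{m}\psi\left(nx+\frac{n(j-1)}{m}\right)-m\log(n)\right\}=m\left\{\sum_{j=1}^{n}\psi\left(mx+\frac{m(j-1)}{n}\right)-n\log(m)\right\}.
\end{equation*}
Expanding the braces gives $n\sum_{j=1}^{m}\psi(\cdots)-nm\log(n)=m\sum_{j=1}^{n}\psi(\cdots)-mn\log(m)$, and transferring the logarithmic terms to the opposite sides (using $nm=mn$) yields exactly the asserted symmetric form with $+m\log(m)$ inside the bracket on the left and $+n\log(n)$ inside the bracket on the right. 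There is no genuine obstacle beyond the bookkeeping of these $\log$ terms and the verification that $nx+\tfrac{n(j-1)}{m}\in\mathfrak{D'}$ (which holds since adding a nonnegative real cannot increase $|\arg|$); the entire content of the statement is already carried by Theorem \ref{finitetrans2}.

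For a self-contained derivation that also explains the origin of the logarithms, one may instead start directly from Carlitz's relation \eqref{carlitz relation} and pass to the limit $z\to1$. Writing $s=z-1$ and using $n^{z}=n(1+s\log(n)+O(s^2))$ together with the Laurent expansion $\zeta(z,y)=\tfrac{1}{s}+\gamma_0(y)+O(s)=\tfrac{1}{s}-\psi(y)+O(s)$ from \eqref{hurwitzlaurent} and \eqref{psigamma}, the left-hand side of \eqref{carlitz relation} becomes
\begin{equation*}
	\frac{nm}{s}+n\left(m\log(n)-\sum_{j=1}^{m}\psi\left(nx+\frac{n(j-1)}{m}\right)\right)+O(s),
\end{equation*}
with the analogous expansion on the right. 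The simple poles $nm/s$ cancel, and equating the constant ($s^0$) coefficients reproduces the displayed identity. The only point requiring care on this route is the correct matching of the principal parts so that the finite coefficients may legitimately be compared; this is precisely the $k=0$ shadow of the differentiation-and-limit argument that underlies Theorem \ref{finitetrans2}.
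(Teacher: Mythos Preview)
Your proposal is correct and follows exactly the paper's approach, which is simply to put $k=0$ in Theorem \ref{finitetrans2}; you have merely supplied the routine bookkeeping that the paper omits. Your second, self-contained derivation via the Laurent expansion of $\zeta(z,y)$ around $z=1$ is also valid and is precisely the $k=0$ specialization of the argument used to prove Theorem \ref{finitetrans2} itself.
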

	The above corollary can be viewed as a finite analogue of Ramanujan's transformation \eqref{w1.26}. Guinand essentially derived it; see \cite[Equation (8)]{apg3} for more details.
	\subsection{A fundamental inversion formula}
	
	Inversion formulas are extremely useful in combinatorics and number theory. In what follows, we state a fundamental inversion formula which will help us realize our objective of finding modular relations between derivatives of a fixed order of $\psi(x), \psi_1(x), \cdots, \psi_k(x)$ for every $k\in\mathbb{N}$.    
	\begin{theorem} \label{Inv}
		Let f, g and s be arithmetic functions with $s(1)\neq0$ such that for $k \in \mathbb{N} \cup \{0\} $,
		\begin{align*}
			g(k)= \sum_{r=0}^{k} s(k-r+1) f(r).
		\end{align*}
		Then,
		\begin{align*}
			f(k)= \sum_{r=0}^{k}  \frac{(-1)^{r}}{s^{r+1}(1)} h(r) g(k-r),
		\end{align*}
		where
		\begin{align}\label{h(r)}
			h(r)= \sum (-1)^{b_1} \prod_{i=1}^{r+1} s^{b_i}(i) \frac{(r-{b_1})! }{b_2!...b_{r+1}!},
		\end{align}
		where the sum runs over all non-negative integer solutions of $1 b_1+...+ (r+1)b_{r+1}= 2r, \\ b_1+...+b_{r+1}=r$.
	\end{theorem}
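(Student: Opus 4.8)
The plan is to translate the triangular convolution in the hypothesis into an identity between formal power series, and then to read off the coefficients of the inverse series combinatorially. First I would note that, on substituting $m=k-r$, the defining relation becomes the ordinary Cauchy convolution
\begin{equation*}
g(k)=\sum_{m=0}^{k}s(m+1)\,f(k-m).
\end{equation*}
Introducing the formal generating functions $F(x)=\sum_{k\ge0}f(k)x^{k}$, $G(x)=\sum_{k\ge0}g(k)x^{k}$ and $\widehat{S}(x)=\sum_{m\ge0}s(m+1)x^{m}$, this is exactly $G(x)=\widehat{S}(x)F(x)$. Since $s(1)\neq0$, the series $\widehat{S}(x)$ is invertible in the ring of formal power series, so $F(x)=\widehat{S}(x)^{-1}G(x)$. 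Writing $\widehat{S}(x)^{-1}=\sum_{r\ge0}t(r)x^{r}$ then gives $f(k)=\sum_{r=0}^{k}t(r)g(k-r)$, and the theorem reduces to the single identity
\begin{equation*}
t(r)=\frac{(-1)^{r}}{s^{r+1}(1)}\,h(r).
\end{equation*}

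Next I would compute $t(r)$ explicitly. Factoring $\widehat{S}(x)=s(1)\left(1+u(x)\right)$ with $u(x)=\sum_{m\ge1}\frac{s(m+1)}{s(1)}x^{m}$, the geometric expansion yields $\widehat{S}(x)^{-1}=\frac{1}{s(1)}\sum_{j\ge0}(-1)^{j}u(x)^{j}$. Extracting $[x^{r}]$ from $u(x)^{j}$ by the multinomial theorem and grouping the ordered factorizations of $r$ according to the number $a_{i}$ of factors equal to $i$ gives
\begin{equation*}
[x^{r}]\,u(x)^{j}=\sum_{\substack{\sum_{i}a_{i}=j\\ \sum_{i}i\,a_{i}=r}}\frac{j!}{\prod_{i}a_{i}!}\prod_{i\ge1}\left(\frac{s(i+1)}{s(1)}\right)^{a_{i}}.
\end{equation*}
Summing over $j$ with the sign $(-1)^{j}$ merges the two constraints into the single weighted condition $\sum_{i}i\,a_{i}=r$, so that, writing $J:=\sum_{i}a_{i}$,
\begin{equation*}
t(r)=\sum_{\sum_{i}i\,a_{i}=r}(-1)^{J}\,s(1)^{-J-1}\,\frac{J!}{\prod_{i}a_{i}!}\prod_{i\ge1}s(i+1)^{a_{i}}.
\end{equation*}

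Finally I would match this expression with $h(r)$. Setting $b_{i+1}=a_{i}$ for $i\ge1$ and $b_{1}:=r-J$, one checks that the two homogeneous constraints defining $h(r)$, namely $\sum_{i=1}^{r+1}b_{i}=r$ and $\sum_{i=1}^{r+1}i\,b_{i}=2r$, are together equivalent to the single condition $\sum_{i}i\,a_{i}=r$ supplemented by the definition of $b_{1}$; this is precisely the encoding of a partition of $2r$ into exactly $r$ parts, in which $b_{i}$ counts the parts equal to $i$ and the largest admissible part is $r+1$. Substituting $s^{b_{1}}(1)=s(1)^{\,r-J}$, $(r-b_{1})!=J!$ and $\prod_{i\ge2}s^{b_{i}}(i)=\prod_{i\ge1}s(i+1)^{a_{i}}$ into the definition \eqref{h(r)} of $h(r)$ and then multiplying by $(-1)^{r}/s^{r+1}(1)$, the powers of $s(1)$ combine to $s(1)^{-J-1}$ and the signs $(-1)^{r}(-1)^{b_{1}}=(-1)^{2r-J}$ collapse to $(-1)^{J}$, reproducing the displayed formula for $t(r)$. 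I expect the only delicate point to be this final bookkeeping: verifying that the pair of constraints on the $b_{i}$ genuinely reduces to the single weighted constraint on the $a_{i}$, and that the sign $(-1)^{b_{1}}$ together with the prefactor $(-1)^{r}$ and the various powers of $s(1)$ reassemble correctly. Everything preceding it is a routine coefficient extraction.
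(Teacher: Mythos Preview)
Your proof is correct and takes a genuinely different route from the paper's. The paper proceeds by strong induction on $k$: assuming the inversion formula for all $m\le k$, it substitutes the induction hypothesis into $g(k+1)=\sum_{r=0}^{k+1}s(k-r+2)f(r)$ and reduces the inductive step to the partition identity
\[
\sum_{i=0}^{\ell}(-1)^{i}\frac{s(\ell-i+2)}{s^{i+1}(1)}\,h(i)=\frac{(-1)^{\ell}}{s^{\ell+1}(1)}\,h(\ell+1),
\]
which it then proves by a direct combinatorial argument on multisets of partitions, tracking how a partition of $2i$ into $i$ parts augments to a partition of $2\ell+2$ into $\ell+1$ parts. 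Your approach instead recognises the hypothesis as an ordinary Cauchy convolution, passes to formal power series, and identifies the sought coefficients $t(r)$ with $[x^{r}]\widehat{S}(x)^{-1}$ via the geometric/multinomial expansion; the final bijection $b_{1}=r-J$, $b_{i+1}=a_{i}$ between compositions of $r$ and partitions of $2r$ into $r$ parts is exactly what makes the paper's formulation of $h(r)$ appear. What your approach buys is a transparent explanation of \emph{why} $h(r)$ has the shape it does (it is simply the standard formula for the reciprocal of a power series, repackaged), and it avoids the somewhat delicate multiset bookkeeping of the inductive proof. What the paper's approach buys is the recursion for $h(r)$ displayed above as an independent partition identity, which may be of interest in its own right; indeed the paper remarks that this inversion formula is connected to the inverse of a triangular Toeplitz matrix, and the inductive proof makes that recursive structure explicit.
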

	The proof is a nice application of mathematical induction.
	
	\subsection{Modular relations involving the derivatives of first $k$ generalized digamma functions}
	
	\begin{theorem} \label{finitetrans1}
		Let $x\in\mathfrak{D'}$, $z\in\mathbb{N}, z>1$, and $m,n\in\mathbb{N}$. For any non-negative integer $k$,
		\begin{align}\label{meeting equation}
			&\sum_{\ell=0}^{k} \frac{2^\ell}{(k-\ell)!} \left(\frac{m}{n}\right)^{\frac{-z}{2}} \log^{k-\ell}\left(\frac{m}{n}\right) \sum_{r=0}^{\ell} \frac{(-1)^r }{(\ell-r)!} \frac{h(r)}{s^r(z,1)} \sum_{j=1}^{m}  \psi_{\ell-r}^{(z-1)} \left(nx + \frac{n(j-1)}{m}\right)\notag\\
			=&\sum_{\ell=0}^{k} \frac{2^\ell}{(k-\ell)!} \left(\frac{n}{m}\right)^{\frac{-z}{2}} \log^{k-\ell}\left(\frac{n}{m}\right)  \sum_{r=0}^{\ell} \frac{(-1)^r }{(\ell-r)!} \frac{h(r)}{s^r(z,1)}  \sum_{j=1}^{n} \psi_{\ell-r}^{(z-1)} \left(mx + \frac{m(j-1)}{n}\right),
		\end{align}
		where $h(r)$, defined in Theorem \ref{Inv}, is considered with $s(i)=s(z, i)$, the Stirling number of the first kind.
	\end{theorem}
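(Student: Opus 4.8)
The plan is to obtain \eqref{meeting equation} by symmetrizing the Carlitz transformation \eqref{carlitz relation}, differentiating it $k$ times with respect to the order variable $z$, and then converting the resulting $z$-derivatives of the Hurwitz zeta function into generalized digamma functions by means of \eqref{Inv6}.

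First I would divide both sides of \eqref{carlitz relation} by $(mn)^{z/2}$ to bring it into the manifestly symmetric form
\begin{equation*}
\Phi_{m,n}(z):=\left(\frac{m}{n}\right)^{-z/2}\sum_{j=1}^{m}\zeta\left(z,nx+\frac{n(j-1)}{m}\right)=\left(\frac{n}{m}\right)^{-z/2}\sum_{j=1}^{n}\zeta\left(z,mx+\frac{m(j-1)}{n}\right)=\Phi_{n,m}(z),
\end{equation*}
so that $\Phi_{m,n}(z)$ is invariant under the interchange $m\leftrightarrow n$. Since each inner sum over $j$ is finite, both sides are analytic in $z$ away from the simple pole of $\zeta$ at $z=1$, and in particular in a neighbourhood of the fixed integer $z>1$; the identity may therefore be differentiated in $z$ term by term. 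A short check shows that every argument $nx+n(j-1)/m$ lies in $\mathfrak{D'}$ whenever $x$ does, since scaling by a positive real and adding a non-negative real cannot increase $|\arg|$; hence the functions $\psi_{\ell-r}^{(z-1)}$ arising below are analytic there.

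Next I would differentiate $\Phi_{m,n}(z)=\Phi_{n,m}(z)$ exactly $k$ times in $z$. Applying the Leibniz rule to the product $(m/n)^{-z/2}\sum_{j}\zeta(z,\cdot)$, using $\frac{d^{k-\ell}}{dz^{k-\ell}}(m/n)^{-z/2}=(-1/2)^{k-\ell}\log^{k-\ell}(m/n)\,(m/n)^{-z/2}$ and $\binom{k}{\ell}\ell!=k!/(k-\ell)!$, yields
\begin{equation*}
\frac{d^{k}}{dz^{k}}\Phi_{m,n}(z)=k!\left(\frac{m}{n}\right)^{-z/2}\sum_{\ell=0}^{k}\frac{(-1/2)^{k-\ell}}{(k-\ell)!}\log^{k-\ell}\left(\frac{m}{n}\right)\frac{1}{\ell!}\sum_{j=1}^{m}\zeta^{(\ell)}\left(z,nx+\frac{n(j-1)}{m}\right).
\end{equation*}
Fixing the integer $z>1$ and inserting \eqref{Inv6} (whose ``$n$'' is the present $z$) to replace each $\zeta^{(\ell)}(z,\cdot)$ by $\sum_{r=0}^{\ell}\frac{(-1)^{\ell+r+1}}{s^{r+1}(z,1)}\frac{\ell!}{(\ell-r)!}h(r)\psi_{\ell-r}^{(z-1)}(\cdot)$ is legitimate because $z>1$ keeps us off the pole; the factor $\ell!$ coming from \eqref{Inv6} cancels the $1/\ell!$ above.

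Finally I would reconcile the constants. Comparing the resulting summand with the summand on the left-hand side of \eqref{meeting equation}, all of $(k-\ell)!$, $(\ell-r)!$, $h(r)$ and the global $(m/n)^{-z/2}$ cancel, and the remaining ratio of coefficients is
\begin{equation*}
\frac{2^{\ell}(-1)^{r}/s^{r}(z,1)}{(-1/2)^{k-\ell}(-1)^{\ell+r+1}/s^{r+1}(z,1)}=(-1)^{k-\ell}2^{k}\cdot(-1)^{\ell+1}s(z,1)=(-1)^{k+1}2^{k}s(z,1),
\end{equation*}
which is independent of $\ell$, $r$, $m$ and $n$. Consequently the left-hand side of \eqref{meeting equation} equals $\frac{(-1)^{k+1}2^{k}s(z,1)}{k!}\frac{d^{k}}{dz^{k}}\Phi_{m,n}(z)$, and by the identical computation its right-hand side equals the same constant times $\frac{d^{k}}{dz^{k}}\Phi_{n,m}(z)$. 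Since $\Phi_{m,n}=\Phi_{n,m}$ as analytic functions of $z$, their $k$-th derivatives coincide at $z$, and \eqref{meeting equation} follows. I expect the genuine work to be confined to this last step, namely tracking the signs, the powers of $2$, and the Stirling-number indices so that the discrepancy between the naturally occurring coefficients and those in \eqref{meeting equation} collapses to the single $m,n$-independent factor $(-1)^{k+1}2^{k}s(z,1)/k!$, together with the routine verification that \eqref{Inv6} is applicable at the integer $z$.
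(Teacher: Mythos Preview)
Your proposal is correct and follows essentially the same approach as the paper: symmetrize Carlitz's relation \eqref{carlitz relation} to obtain $\Phi_{m,n}(z)=\Phi_{n,m}(z)$ (the paper's $F_{m,n}$), differentiate $k$ times in $z$ via Leibniz, and then invoke the inversion formula \eqref{Inv6} to convert $\zeta^{(\ell)}(z,\cdot)$ into the $\psi_{\ell-r}^{(z-1)}$ sum. Your explicit bookkeeping that the discrepancy between the resulting coefficients and those in \eqref{meeting equation} collapses to the single $m,n$-independent factor $(-1)^{k+1}2^{k}s(z,1)/k!$ is more detailed than the paper, which simply writes ``upon simplification'' after displaying the analogue of your differentiated expression.
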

	Two special cases of Theorem \ref{finitetrans1} are stated next. These can be considered to be analogues of the duplication formula of $\psi(x)$ \cite[p.~76]{temme}.
	\begin{corollary} \label{finitetrans1 cor1}
		For $x\in\mathfrak{D'}$,
		\small\begin{align*}
			\left( 1 - \frac{1}{2} \log2 \right) \psi'(2x) + \psi_1'(2x) = \frac{1}{4} \left\{ \left( 1 + \frac{1}{2} \log2 \right) \left( \psi'(x) + \psi'\left(x+\frac{1}{2}\right) \right) + \left( \psi_1'(x) + \psi_1'\left(x+\frac{1}{2}\right) \right) \right\}.
		\end{align*}
	\end{corollary}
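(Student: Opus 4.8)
The plan is to obtain this corollary as the specialization $z = 2$, $k = 1$, $m = 2$, $n = 1$ of Theorem \ref{finitetrans1}, so the whole task reduces to evaluating the combinatorial and Stirling-number constants appearing in \eqref{meeting equation} and then tidying up. First I would observe that with $z - 1 = 1$ the derivatives $\psi_{\ell-r}^{(z-1)}$ become first derivatives $\psi_{\ell-r}'$, and that the two inner sums collapse cleanly: taking $m = 2$, $n = 1$ turns $\sum_{j=1}^{m}\psi_{\ell-r}'\bigl(nx + n(j-1)/m\bigr)$ into $\psi_{\ell-r}'(x) + \psi_{\ell-r}'(x + \tfrac12)$ on the left-hand side, while $\sum_{j=1}^{n}\psi_{\ell-r}'\bigl(mx + m(j-1)/n\bigr)$ is just the single term $\psi_{\ell-r}'(2x)$ on the right-hand side.

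Next I would record the constants. Since $k = 1$, only $\ell \in \{0,1\}$ occur, so I need only $h(0)$ and $h(1)$ from \eqref{h(r)}, together with $s(2,1)$ and $s(2,2)$. The empty-partition case gives $h(0) = 1$; for $r = 1$ the only non-negative solution of $b_1 + 2b_2 = 2$, $b_1 + b_2 = 1$ is $(b_1, b_2) = (0,1)$, whence $h(1) = s(2,2) = 1$. Using the standard values $s(2,1) = -1$ and $s(2,2) = 1$ of the signed Stirling numbers of the first kind, every factor $h(r)/s^{r}(2,1)$ and every power of $(m/n)^{-z/2} = 2^{-1}$ becomes explicit.

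Then I would substitute these into \eqref{meeting equation}. On the left, the $\ell = 0$ term contributes $\tfrac12 \log 2\,[\psi'(x) + \psi'(x+\tfrac12)]$ and the $\ell = 1$ term contributes $[\psi_1'(x) + \psi_1'(x+\tfrac12)] + [\psi'(x) + \psi'(x+\tfrac12)]$, so the left-hand side equals $\bigl(1 + \tfrac12\log 2\bigr)[\psi'(x)+\psi'(x+\tfrac12)] + [\psi_1'(x)+\psi_1'(x+\tfrac12)]$. On the right, using $(n/m)^{-z/2} = 2$ and $\log(n/m) = -\log 2$, the $\ell = 0$ term gives $-2\log 2\,\psi'(2x)$ and the $\ell = 1$ term gives $4\psi_1'(2x) + 4\psi'(2x)$, so the right-hand side is $(4 - 2\log 2)\,\psi'(2x) + 4\psi_1'(2x)$. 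Equating the two sides and dividing by $4$ produces exactly the claimed identity.

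The computation is entirely mechanical once Theorem \ref{finitetrans1} is in hand; the only place demanding care is the sign bookkeeping, specifically the cancellation $(-1)^{r}/s^{r}(2,1) = (-1)^{r}/(-1)^{r} = 1$ at $r = 1$, which converts the subtracted $r = 1$ contribution on each side into a positive $\psi'$-term and ultimately produces the $\bigl(1 \pm \tfrac12\log 2\bigr)$ coefficients. I expect verifying $h(1) = 1$ and this sign interplay to be the main, though modest, obstacle.
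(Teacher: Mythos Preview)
Your proposal is correct and essentially mirrors the paper's own proof, which simply says to take $z=2$, $k=1$, $m=1$, $n=2$ in Theorem~\ref{finitetrans1} and use $s(2,2)=1$, $h(0)=h(1)=1$; your choice $(m,n)=(2,1)$ just swaps the two sides of \eqref{meeting equation}, and the detailed bookkeeping you carry out---including the sign check $(-1)^r/s^r(2,1)=1$---is accurate.
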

	
	\begin{corollary} \label{finitetrans1 cor2}
		For $x\in\mathfrak{D'}$,
		\small\begin{align*}
			&\left( \frac{1}{4} \log^2 2 + \log2 +2 \right)  \left\{ \psi'(x) + \psi'\left(x + \frac{1}{2}\right) \right\} + 	\left( \log2 +2 \right) \left\{ \psi_1'(x) + \psi_1'\left(x + \frac{1}{2}\right) \right\} + 	\left\{ \psi_2'(x) + \psi_2'\left(x + \frac{1}{2}\right) \right\} \\
			&= 4 \left[ \left( \frac{1}{4} \log^2 2 - \log2 +2 \right)\psi'(2x) + \left( - \log2 +2 \right) \psi_1'(2x)+ \psi_2'(2x) \right].
		\end{align*}
	\end{corollary}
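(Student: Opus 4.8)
The plan is to obtain Corollary~\ref{finitetrans1 cor2} as a direct specialization of Theorem~\ref{finitetrans1}. Since the corollary involves the first derivatives $\psi'=\psi_0'$, $\psi_1'$ and $\psi_2'$, the natural choice is $z-1=1$, that is $z=2$, together with $k=2$ so that the outer index $\ell$ runs over $0,1,2$. To reproduce the two groups of arguments, I would take $m=2$ and $n=1$: then the inner sum $\sum_{j=1}^{m}\psi_{\ell-r}^{(z-1)}\!\left(nx+\tfrac{n(j-1)}{m}\right)$ collapses to $\psi_{\ell-r}'(x)+\psi_{\ell-r}'\!\left(x+\tfrac12\right)$, while $\sum_{j=1}^{n}\psi_{\ell-r}^{(z-1)}\!\left(mx+\tfrac{m(j-1)}{n}\right)$ reduces to the single term $\psi_{\ell-r}'(2x)$. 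The prefactors specialize to $(m/n)^{-z/2}=\tfrac12$ with $\log^{k-\ell}(m/n)=\log^{2-\ell}(2)$ on the left, and to $(n/m)^{-z/2}=2$ with $\log^{2-\ell}(n/m)=(-1)^{2-\ell}\log^{2-\ell}(2)$ on the right.

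Next I would evaluate the arithmetic data entering \eqref{meeting equation}. For $z=2$ the Stirling numbers of the first kind are $s(2,1)=-1$, $s(2,2)=1$, and $s(2,i)=0$ for $i\ge 3$. Feeding these into the partition sum \eqref{h(r)} of Theorem~\ref{Inv} gives $h(0)=1$ from the empty partition, $h(1)=1$ from the sole solution $b_2=1$, and $h(2)=1$: among the two partitions of $4$ into two parts, $(b_1,b_2,b_3)=(0,2,0)$ contributes $s^2(2,2)=1$ while $(1,0,1)$ vanishes because $s(2,3)=0$. Since $s^r(2,1)=(-1)^r$, the combined weight in each inner sum therefore simplifies uniformly to
\begin{equation*}
\frac{(-1)^r}{(\ell-r)!}\,\frac{h(r)}{s^r(2,1)}=\frac{(-1)^r}{(\ell-r)!}\cdot(-1)^r=\frac{1}{(\ell-r)!}.
\end{equation*}

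With this the remaining work is purely algebraic. Reindexing the inner sum by $a=\ell-r$ turns each side into a finite double sum over $\ell\in\{0,1,2\}$ and $a\in\{0,\dots,\ell\}$ with weights $\tfrac{2^\ell}{(2-\ell)!}\,\tfrac{1}{a!}$. Collecting the coefficients of $\psi_0'$, $\psi_1'$ and $\psi_2'$ on the left, after applying the overall factor $\tfrac12$, yields $\tfrac14\log^2 2+\log 2+2$, $\log 2+2$ and $1$, which is exactly the left-hand side of the corollary; on the right the sign factor $(-1)^{2-\ell}$ together with the prefactor $2$ yields the coefficients $\tfrac14\log^2 2-\log 2+2$, $-\log 2+2$ and $1$ scaled by the displayed factor $4$. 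The only genuinely nontrivial point is the evaluation of $h(2)$, where one must enumerate the partitions correctly and exploit the vanishing $s(2,3)=0$; everything else is routine bookkeeping of the factorial weights.
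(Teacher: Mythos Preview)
Your proposal is correct and follows exactly the approach of the paper: specialize Theorem~\ref{finitetrans1} with $z=2$, $k=2$, $m=2$, $n=1$, use $s(2,1)=-1$, $s(2,2)=1$, $s(2,3)=0$, and $h(0)=h(1)=h(2)=1$, then carry out the routine algebra. Your treatment is in fact more detailed than the paper's, which merely lists these parameter values and facts; your explicit enumeration of the two partitions for $h(2)$ and the coefficient bookkeeping are accurate.
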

	
	\subsection{Guinand-type transformations}
	
	\subsubsection{The case $z\in\mathbb{N}, z>2$}
	
	\begin{theorem}\label{guinand gen psi_k z>2}
		Fix $z\in\mathbb{N}, z>2$. For $\alpha, \beta>0$ such that $\alpha \beta =1$, we have
		\begin{align}\label{z>2}
			&\sum_{\ell=0}^{k} \frac{2^\ell}{(k-\ell)!} \alpha^{\frac{-z}{2}} \log^{k-\ell}(\alpha) \sum_{r=0}^{\ell} \frac{(-1)^r }{(\ell-r)!} \frac{h(r)}{s^r(z,1)} \sum_{j=1}^{\infty}  \psi_{\ell-r}^{(z-1)} \left(1 + \frac{j}{\alpha}\right)\notag\\
			=&\sum_{\ell=0}^{k} \frac{2^\ell}{(k-\ell)!} \beta^{\frac{-z}{2}} \log^{k-\ell}(\beta)  \sum_{r=0}^{\ell} \frac{(-1)^r }{(\ell-r)!} \frac{h(r)}{s^r(z,1)} \sum_{j=1}^{\infty} \psi_{\ell-r}^{(z-1)} \left(1 + \frac{j}{\beta}\right),
		\end{align}
		where $h(r)$, defined in Theorem \ref{Inv}, is considered with $s(i)=s(z, i)$, the Stirling number of the first kind.
	\end{theorem}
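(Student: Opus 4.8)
The plan is to derive \eqref{z>2} as a limiting case of the Carlitz-type transformation \eqref{meeting equation} of Theorem \ref{finitetrans1}. The first observation is that the two sides of \eqref{meeting equation} already carry exactly the outer structure of \eqref{z>2}: with $\alpha=m/n$ and $\beta=n/m$ the prefactors match, since $(m/n)^{-z/2}=\alpha^{-z/2}$, $\log^{k-\ell}(m/n)=\log^{k-\ell}(\alpha)$, and likewise on the right, and the coefficients $\tfrac{2^\ell}{(k-\ell)!}$, $\tfrac{(-1)^r}{(\ell-r)!}$, $\tfrac{h(r)}{s^r(z,1)}$ are identical. Only the inner sums over $j$ (finite in \eqref{meeting equation}, infinite in \eqref{z>2}) and the arguments of $\psi_{\ell-r}^{(z-1)}$ differ. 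So I would choose $x$ in \eqref{meeting equation} to collapse those arguments onto $1+j/\alpha$ and $1+j/\beta$, and then let $m,n\to\infty$ with $m/n$ held fixed. When $k=0$ this specializes to Guinand's relation \eqref{guigen}.

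Concretely, I would fix a positive rational $\alpha=p/q$ in lowest terms, set $\beta=q/p=1/\alpha$, and in \eqref{meeting equation} take $m=Np$, $n=Nq$ (so $m/n=\alpha$ for every $N\in\mathbb{N}$) together with $x=\tfrac{1}{m}+\tfrac{1}{n}\in\mathfrak{D'}$. A one-line computation then gives
\begin{align*}
nx+\frac{n(j-1)}{m} &= 1+\frac{n}{m}\,j = 1+\frac{j}{\alpha}\qquad(1\le j\le m),\\
mx+\frac{m(j-1)}{n} &= 1+\frac{m}{n}\,j = 1+\frac{j}{\beta}\qquad(1\le j\le n),
\end{align*}
the summands being independent of $N$ while the ranges $1\le j\le Np$ and $1\le j\le Nq$ grow with $N$. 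Thus, for each $N$, \eqref{meeting equation} is precisely \eqref{z>2} with $\sum_{j=1}^{\infty}$ replaced by the partial sums $\sum_{j=1}^{Np}$ on the left and $\sum_{j=1}^{Nq}$ on the right. Letting $N\to\infty$ (the outer sums over $\ell$ and $r$ being finite) would then yield \eqref{z>2} for $\alpha=p/q$, provided the relevant series converge, and continuity in $\alpha$ would upgrade this from rationals to all $\alpha>0$.

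The substantive point, and the place where the hypothesis $z>2$ is essential, is exactly this convergence. Differentiating the asymptotic expansion \eqref{asymptotic coffey} term by term (legitimate, since it arises from the series \eqref{psikx}) gives
\begin{equation*}
\psi_{\ell-r}^{(z-1)}(w)=O\!\left(\frac{\log^{\ell-r}(w)}{w^{z-1}}\right)\qquad(w\to\infty),
\end{equation*}
so the terms of $\sum_{j\ge1}\psi_{\ell-r}^{(z-1)}(1+j/\alpha)$ are $O\big(\log^{\ell-r}(j)/j^{z-1}\big)$, whence the series converges absolutely precisely when $z-1>1$, i.e.\ $z>2$. This is also why $z=2$ fails and must be handled separately in Theorem \ref{curious}: there $\psi_{\ell-r}'(w)\sim\log^{\ell-r}(w)/w$ forces divergence like $\sum\log^{\ell-r}(j)/j$. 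The same estimate shows, via the Weierstrass $M$-test, that each series converges uniformly on compact subsets of $\{\alpha>0\}$ and is therefore continuous in $\alpha$; multiplying by the continuous factors $\alpha^{-z/2}\log^{k-\ell}(\alpha)$ and summing the finitely many $(\ell,r)$-terms, both sides of \eqref{z>2} become continuous on $(0,\infty)$, and agreement on the dense set of positive rationals forces agreement everywhere.

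The main obstacle, then, is not the (essentially bookkeeping) substitution $x=\tfrac1m+\tfrac1n$ but the verification of the absolute and uniform convergence estimate for $\psi_{\ell-r}^{(z-1)}$. This is what confines the argument to $z>2$ and isolates the borderline case $z=2$ for separate treatment. I would remark, as a consistency check, that by the inversion identity \eqref{Inv6} the inner $r$-sum equals a constant multiple of $\zeta^{(\ell)}(z,\cdot)$, so \eqref{z>2} may equivalently be read as a Guinand-type modular relation for $\sum_{j}\zeta^{(\ell)}(z,1+j/\alpha)$, but this reformulation is not needed for the proof itself.
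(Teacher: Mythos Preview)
Your proposal is correct and follows essentially the same route as the paper: the same substitution $m=Np$, $n=Nq$, $x=\tfrac{1}{m}+\tfrac{1}{n}$ in Theorem~\ref{finitetrans1}, the same limit $N\to\infty$, and the same density-plus-continuity passage from $\alpha\in\mathbb{Q}^{+}$ to $\alpha>0$. The one soft spot is your justification of the estimate $\psi_{\ell-r}^{(z-1)}(w)=O(\log^{\ell-r}(w)/w^{z-1})$ by ``differentiating the asymptotic expansion \eqref{asymptotic coffey} term by term''; asymptotic expansions are not in general termwise differentiable, and the paper instead establishes this bound directly as Lemma~\ref{8.2a} (via the integral representation in Lemma~\ref{lemma minus two} together with \eqref{coffey special}), which you should invoke in place of that step.
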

	
	The infinite version of Theorem \ref{finitetrans1} for $z=2$ cannot be directly obtained in a similar way as the one obtained above for $z\in\mathbb{N}, z>2$ because the associated infinite series involving $\psi_j'$ diverge. By a careful limiting process, however, one can obtain the following result.
	\subsubsection{The curious case of $z=2$}
	
	\begin{theorem}\label{curious}
For $\alpha, \beta>0$ such that $\alpha\beta=1$, 
		\small\begin{align}
			&\sum_{\ell=0}^{k}\frac{1}{\ell!} \sum_{r=0}^{k-\ell}  \frac{1}{r!} \left\{ (-1)^\ell 2^{k-\ell}  \alpha \log^{\ell} \left( \alpha \right)  \sum_{j=1}^{\infty} \left(  \psi_{r}' \left(1 + \alpha j\right) - \frac{\log^{r}(\alpha j)}{\alpha j} \right)  + a_\ell 2^{k-r} \log^{r} \left( \alpha \right)  \right\} - \frac{1}{2(k+1)!}  \log^{k+1}\left( \alpha \right) \notag\\
				&\sum_{\ell=0}^{k}\frac{1}{\ell!} \sum_{r=0}^{k-\ell}  \frac{1}{r!} \left\{ (-1)^\ell 2^{k-\ell}  \beta \log^{\ell} \left( \beta \right)  \sum_{j=1}^{\infty} \left(  \psi_{r}' \left(1 + \beta j\right) - \frac{\log^{r}(\beta j)}{\beta j} \right)  + a_\ell 2^{k-r} \log^{r} \left( \beta \right)  \right\} - \frac{1}{2(k+1)!}  \log^{k+1}\left( \beta \right),\label{curious eqn}
		\end{align}
		\normalsize
		where $a_{\ell}$ is $\gamma-1$ if $\ell=0$, and $\gamma_\ell$ otherwise.
	\end{theorem}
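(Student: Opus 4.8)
The plan is to obtain Theorem~\ref{curious} as the delicate boundary case $z\to2$ of the same Hurwitz-zeta modular relation \eqref{mainneq2} that underlies Theorem~\ref{guinand gen psi_k z>2}, the difficulty being that the correction term $\zeta(z-1)/(\alpha(z-1))$ in \eqref{mainneq2} acquires a simple pole at $z=2$ (inherited from the pole of $\zeta(s)$ at $s=1$) and the attendant series of $\psi_j'$ diverge. First I would record the collapse of the combinatorial data at $z=2$: since $s(2,1)=-1$, $s(2,2)=1$ and $s(2,i)=0$ for $i>2$, the two constraints $\sum_i i\,b_i=2r$ and $\sum_i b_i=r$ in \eqref{h(r)} force $b_2=r$ with all other $b_i=0$, whence $h(r)=1$ and $(-1)^r h(r)/s^r(z,1)\big|_{z=2}=1$. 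Combined with \eqref{Inv6} at $n=2$, this yields the clean identity $\tfrac{(-1)^\ell}{\ell!}\,\zeta^{(\ell)}(2,x)=\sum_{p=0}^{\ell}\psi_p'(x)/p!$, which is the bridge that turns $z$-derivatives of $\zeta(z,x)$ into the first-derivative generalized digamma sums of \eqref{curious eqn}.

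Next I would apply $\tfrac1{k!}\partial_z^k$ to the series equality in \eqref{mainneq2}, analytically continued in $z$ to a punctured neighbourhood of $z=2$ inside the convergence region $0<\textup{Re}(z)<2$. By the Leibniz rule, derivatives landing on $\alpha^{z/2}$ generate the factors $(\tfrac12\log\alpha)^{k-\ell}$, i.e.\ the powers $2^{-(k-\ell)}\log^{k-\ell}(\alpha)$ and the reciprocal factorials that assemble into the convolution structure $\ell+r\le k$ of \eqref{curious eqn}, while derivatives on $\zeta(z,n\alpha)$ produce $\zeta^{(\ell)}(z,n\alpha)$. I would then set $z=2+\varepsilon$ and expand everything to order $\varepsilon^0$. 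The decisive expansions are $\zeta(z-1)=\tfrac1\varepsilon+\sum_{\ell\ge0}\tfrac{(-1)^\ell\gamma_\ell}{\ell!}\varepsilon^\ell$, together with the extra factor $1/(z-1)=1-\varepsilon+\cdots$; these two are exactly the source of the constants $a_\ell$, the shift from $\gamma$ to $\gamma-1$ at $\ell=0$ being produced by the $1/(z-1)$ factor. The product of the resulting simple pole $\tfrac1\varepsilon$ with $\alpha^{z/2}=\alpha(1+\tfrac{\varepsilon}{2}\log\alpha+\cdots)$ manufactures the term $-\tfrac{1}{2(k+1)!}\log^{k+1}(\alpha)$.

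The regularization inside the series is dictated by the large-argument asymptotics of $\psi_r'$: differentiating Coffey's expansion \eqref{asymptotic coffey} gives $\psi_r'(x)\sim\log^r(x)/x$, so $\psi_r'(1+\alpha j)-\log^r(\alpha j)/(\alpha j)$ is summable, and the divergent tail $\sum_j\log^r(\alpha j)/(\alpha j)$ is precisely the piece absorbed by the polar part of $\zeta(z-1)$. Here I would invoke Lemmas~\ref{lemma minus two}, \ref{8.2a} and \ref{estimatelog} to justify differentiating \eqref{mainneq2} term by term, to interchange $\varepsilon\to0$ with the summation, and to pin down the asymptotics of the truncated log-sums that carry the constant and $\log^{k+1}$ contributions. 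After extracting the $\varepsilon^0$ coefficient on both sides, the $\zeta$-part of $\varphi$ becomes, via \eqref{Inv6} and the Hurwitz shift $\zeta(z,1+x)=\zeta(z,x)-x^{-z}$ (equivalently the functional equation \eqref{dilcher fe}), the regularized $\psi_r'(1+\alpha j)$ sums, while the defining subtractions $-\tfrac12 x^{-z}+x^{1-z}/(1-z)$ and the correction $\zeta(z)/(2\alpha^z)$ supply the remaining explicit log- and constant-terms; equating the two $\varepsilon^0$ coefficients gives \eqref{curious eqn}.

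The main obstacle is the bookkeeping of this limit. Since $\partial_z^k$ applied to a simple pole yields a pole of order $k+1$, one must show that all of the negative powers of $\varepsilon$ cancel simultaneously between the $\alpha$- and $\beta$-sides (the principal parts agree because \eqref{mainneq2} is an identity in $z$), while tracking the finite remainders exactly, namely the constants $a_\ell$, the polynomials in $\log\alpha$, and the precise form of the regularizing subtraction. As a consistency check I would verify $k=0$, where the construction collapses to $\alpha\sum_j(\psi'(1+\alpha j)-1/(\alpha j))-\tfrac12\log\alpha$ and recovers Guinand's relation \eqref{guigen1}, with the constant $a_0=\gamma-1$ cancelling on both sides.
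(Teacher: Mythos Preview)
Your route is genuinely different from the paper's. The paper does \emph{not} return to \eqref{mainneq2} and let $z\to2$; instead it works entirely at $z=2$ from the \emph{finite} identity of Theorem~\ref{finitetrans1}: replace $m,n$ by $mN,nN$, set $x=\tfrac1N(\tfrac1m+\tfrac1n)$, subtract $\log^{\ell-r}(nj/m)/(nj/m)$ to regularize the finite sums, and then let $N\to\infty$. The asymptotics of the subtracted log-sums are controlled by Lemma~\ref{estimatelog}, and a rather long combinatorial computation shows that the resulting expression is a polynomial in $\log N$ whose positive-degree coefficients vanish identically; the constant term yields \eqref{curious eqn} for $\alpha=m/n\in\mathbb{Q}^+$, and a continuity argument extends to all $\alpha>0$. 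Your plan instead stays in the infinite setting, differentiates the first equality of \eqref{mainneq2} $k$ times, and extracts the $\varepsilon^0$ coefficient as $z=2+\varepsilon\to2$; this parallels the paper's derivation of Theorem~\ref{ramanujan generalization psi_k infinite} at $z\to1$ and is in principle sound. What the paper's approach buys is that no analytic continuation of \eqref{mainneq2} beyond $0<\textup{Re}(z)<2$ is needed (the paper only asserts validity for $\textup{Re}(z)>0$, $z\neq2$ in the concluding remarks, without proof), and the singular behaviour is packaged as ordinary $N\to\infty$ asymptotics rather than as cancellation of a pole of order $k+1$. What your approach buys is that one works directly over the positive reals and the regularization $\psi_p'(x)-\log^p(x)/x$ emerges structurally from the $x^{1-z}/(1-z)$ piece of $\varphi$, as you note. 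Two small corrections to your sketch: Lemma~\ref{estimatelog} is tailored to the paper's $N\to\infty$ argument and plays no obvious role in your $z\to2$ limit (you really need uniform convergence of $\sum_n\partial_z^j\varphi(z,n\alpha)$ near $z=2$, which follows from Lemma~\ref{lemma minus two}); and the arguments in \eqref{mainneq2} are $n\alpha$, not $1+n\alpha$, so the shift via \eqref{dilcher fe} introduces additional summable $O(\log^r(n\alpha)/(n\alpha)^2)$ terms whose constants must be tracked to land exactly on the $a_\ell$ and the $-\tfrac{1}{2(k+1)!}\log^{k+1}(\alpha)$ of \eqref{curious eqn}.
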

	
	\section{Preliminaries}
	
	Throughout the paper, we will be using the notation $\zeta^{(j)}(z,x)$ to denote the $j$th derivative of $\zeta(z, x)$ with respect to $z$. Stirling's formula for $\Gamma(\sigma+it)$ in the vertical strip $p\leq\sigma\leq q$ states that as $|t|\to \infty$, we have \cite[p.~224]{cop}
	\begin{equation}\label{strivert}
		|\Gamma(\sigma+it)|=\sqrt{2\pi}|t|^{\sigma-\frac{1}{2}}e^{-\frac{1}{2}\pi |t|}\left(1+O\left(\frac{1}{|t|}\right)\right).
	\end{equation}
Watson's lemma \cite[p.~71]{olver} is given by
	\begin{theorem}\label{watlem}
		If $q(t)$ is a function of the positive real variable $t$ such that
		\begin{equation*}
			q(t)\sim\sum_{s=0}^{\infty}a_st^{(s+\lambda-\mu)/\mu}\hspace{3mm} (t\to 0)
		\end{equation*}
		for positive constants $\lambda$ and $\mu$, then
		\begin{equation*}
			\int_{0}^{\infty}e^{-yt}q(t)\, dt\sim\sum_{s=0}^{\infty}\Gamma\left(\frac{s+\lambda}{\mu}\right)\frac{a_s}{y^{(s+\lambda)/\mu}}\hspace{3mm} (y\to\infty),
		\end{equation*}
		provided that this integral converges throughout its range for all sufficiently large $y$.
	\end{theorem}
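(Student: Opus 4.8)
The plan is to reduce the statement to the elementary Laplace transform $\int_0^\infty e^{-yt} t^{\nu-1}\, dt = \Gamma(\nu)/y^\nu$, valid for $\Re(\nu)>0$ and $y>0$, which is exactly what produces each summand $\Gamma((s+\lambda)/\mu)\, a_s / y^{(s+\lambda)/\mu}$ on the right-hand side: the exponent $(s+\lambda-\mu)/\mu$ appearing in the hypothesis equals $\nu-1$ with $\nu = (s+\lambda)/\mu$, and the positivity of $\lambda$ (with $s\ge 0$, $\mu>0$) guarantees $\nu>0$, so every such monomial is integrable against $e^{-yt}$ at the origin. The goal is to prove, for each fixed $N$, that the integral differs from its $N$-term partial sum by a quantity of order $y^{-(N+\lambda)/\mu}$, which is precisely the meaning of the claimed asymptotic expansion.

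First I would fix $y_0$ large enough that the integral converges (the standing hypothesis of the theorem) and split $\int_0^\infty = \int_0^\delta + \int_\delta^\infty$ for a small $\delta>0$ to be chosen. On the tail $[\delta,\infty)$ I would write $e^{-yt} = e^{-(y-y_0)t}\, e^{-y_0 t}$ and use the bound $e^{-(y-y_0)t} \le e^{-(y-y_0)\delta}$; since $\int_\delta^\infty e^{-y_0 t} q(t)\, dt$ is finite, the entire tail is $O(e^{-(y-y_0)\delta})$, hence exponentially small and negligible against every negative power of $y$.

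Next, on $[0,\delta]$ I would invoke the asymptotic hypothesis in its quantitative form: for each $N$ there exist a constant $C_N$ and a $\delta>0$ such that the remainder $R_N(t) := q(t) - \sum_{s=0}^{N-1} a_s t^{(s+\lambda-\mu)/\mu}$ satisfies $|R_N(t)| \le C_N\, t^{(N+\lambda-\mu)/\mu}$ for $0 < t \le \delta$. For each explicit term I would complete the truncated integral to $[0,\infty)$, writing $\int_0^\delta e^{-yt} t^{(s+\lambda)/\mu-1}\, dt = \Gamma((s+\lambda)/\mu)/y^{(s+\lambda)/\mu} - \int_\delta^\infty(\cdots)$, the correction again being exponentially small by the tail estimate. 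The remainder contributes at most $C_N \int_0^\delta e^{-yt} t^{(N+\lambda)/\mu-1}\, dt \le C_N\, \Gamma((N+\lambda)/\mu)/y^{(N+\lambda)/\mu} = O(y^{-(N+\lambda)/\mu})$. Collecting the pieces yields $\int_0^\infty e^{-yt} q(t)\, dt = \sum_{s=0}^{N-1} \Gamma((s+\lambda)/\mu)\, a_s/y^{(s+\lambda)/\mu} + O(y^{-(N+\lambda)/\mu})$, and since $N$ is arbitrary this is exactly the asserted expansion.

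The main obstacle is the bookkeeping around the convergence hypothesis: one must check that the exponentially small contributions (both the tail of $q$ and the tails of the completed monomial integrals) are genuinely dominated by $e^{-cy}$ for some $c>0$ as $y\to\infty$, so that they never interfere with the power-of-$y$ scale of the expansion; this is precisely where the requirement that the integral converge for all sufficiently large $y$ enters. A secondary subtlety is passing from the pointwise relation $q(t)\sim\sum a_s t^{(s+\lambda-\mu)/\mu}$ to the uniform remainder bound $|R_N(t)|\le C_N t^{(N+\lambda-\mu)/\mu}$ on a full interval $[0,\delta]$, which follows directly from the definition of an asymptotic expansion applied at each truncation order $N$.
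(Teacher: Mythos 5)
This statement is Watson's lemma; the paper does not prove it at all, but simply quotes it from Olver's book (reference \cite{olver}, p.~71), so there is no internal proof to compare against. Your argument is the classical textbook proof of the lemma and it is essentially correct: splitting at $\delta$, completing each monomial integral $\int_0^\delta e^{-yt}t^{(s+\lambda)/\mu-1}\,dt$ to $[0,\infty)$ at the cost of an exponentially small correction, and using the remainder bound $|R_N(t)|\le C_N t^{(N+\lambda-\mu)/\mu}$ on $(0,\delta]$ (with $\delta$ allowed to depend on $N$, which is harmless since $N$ is fixed before the splitting) together give the $N$-term expansion with error $O\big(y^{-(N+\lambda)/\mu}\big)$, and $(s+\lambda)/\mu>0$ guarantees all the gamma factors exist.

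The one step that needs reinforcement is your tail estimate. The hypothesis is only that $\int_0^\infty e^{-yt}q(t)\,dt$ \emph{converges} for all sufficiently large $y$, not that it converges absolutely; so from the finiteness of $\int_\delta^\infty e^{-y_0t}q(t)\,dt$ you cannot conclude $\big|\int_\delta^\infty e^{-(y-y_0)t}e^{-y_0t}q(t)\,dt\big|\le e^{-(y-y_0)\delta}\int_\delta^\infty e^{-y_0t}|q(t)|\,dt$, since pulling the supremum of one factor outside the integral silently requires $\int_\delta^\infty e^{-y_0t}|q(t)|\,dt<\infty$. The standard repair keeps your conclusion intact: set $\Phi(t):=\int_\delta^t e^{-y_0u}q(u)\,du$, which is bounded on $[\delta,\infty)$, say by $M$, because it has a finite limit as $t\to\infty$; then, for $y>y_0$, integration by parts gives
\begin{equation*}
\int_\delta^\infty e^{-yt}q(t)\,dt=(y-y_0)\int_\delta^\infty e^{-(y-y_0)t}\Phi(t)\,dt,
\end{equation*}
since the boundary terms vanish ($\Phi(\delta)=0$), and the right-hand side is at most $M e^{-(y-y_0)\delta}$ in modulus. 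With this adjustment every exponentially small contribution is genuinely $O(e^{-cy})$ for some $c>0$, and your proof is complete; it is then the same argument as in the source the paper cites.
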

	The above result also holds \cite[p.~32]{temme} for complex $\lambda$ with Re$(\lambda)>0$, and for $y\in\mathbb{C}$ with the integral being convergent for all sufficiently large values of Re$(y)$. 
	
	The following generalization of Watson's lemma which will also be needed in the sequel is due to Wong and Wyman \cite[Theorem 4.1]{wong-wyman}.
	\begin{theorem} \label{watsongen}
		For $\gamma\in\mathbb{R}$, define a function 
		\begin{align*}
			F(z):=\int_{0}^{\infty e^{i\g}}f(t)e^{-zt}dt. 
		\end{align*}
		Assume that $F(z)$ exists for some $z=z_0$. If 
		\begin{enumerate}
			\item for each integer $N\in \mathbb{N}\cup\{0\}$ 
			\begin{align*}
				f(t)=\sum_{n=0}^{N}c_nt^{\l_n-1}P_n(\log t)+o(t^{\l_N-1}(\log t)^{m(N)}),
			\end{align*}
			as $t\to 0$ along $\arg(t)=\g$.
			\item $P_n(\omega)$ is a polynomial of degree $m=m(n)$.
			\item $\{\l_n\}$ is a sequence of complex numbers, with $\Re (\l_{n+1})>\Re( \l_{n}), \Re (\l_{0})>0,$ for all $n$ such that $n$ and $n+1$ are in $\mathbb{N}\cup\{0\}$.
			\item $\{c_n\}$ is a sequence of complex numbers. 
		\end{enumerate}
		Then as $z\to \infty $ in $S(\Delta)$,
		\begin{align*}
			F(z)\sim \sum_{n=0}^{N} c_nP_n(D_n)[\Gamma(\l_n)z^{-\l_n}]+o\left(z^{-\l_N}(\log z)^{m(N)}\right),
		\end{align*}
		where $S(\Delta): |\arg (ze^{i\g})|\leqq{\pi\over 2}-\Delta$, and $D_n$ is the operator $D_n:={d\over d\l_n}$. This result is uniform in the approach of $z\to \infty$ in $S(\Delta)$.
	\end{theorem}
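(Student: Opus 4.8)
The plan is to reduce the general statement to the single model integral $\int_{0}^{\infty e^{i\gamma}} t^{\lambda-1} e^{-zt}\,dt = \Gamma(\lambda)\,z^{-\lambda}$, valid for $\Re(\lambda)>0$ and $z$ in the sector $S(\Delta)$, and then to account for the logarithmic factors $P_n(\log t)$ by differentiating this identity with respect to the parameter $\lambda$. First I would establish the model integral by the substitution $t=ue^{i\gamma}$, which turns it into $e^{i\gamma\lambda}\int_{0}^{\infty} u^{\lambda-1} e^{-(ze^{i\gamma})u}\,du = e^{i\gamma\lambda}\,\Gamma(\lambda)(ze^{i\gamma})^{-\lambda} = \Gamma(\lambda)z^{-\lambda}$; here the inner integral is the standard Gamma integral, convergent because $\Re(ze^{i\gamma})\geq|z|\sin\Delta>0$ throughout $S(\Delta)$ and because condition (3) forces $\Re(\lambda_n)>0$ for every $n$. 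The rotation of the contour from the ray $\arg t=\gamma$ to the positive real axis is justified by Cauchy's theorem together with the decay of the integrand, so this is simply the rotated version of the Gamma integral underlying the classical Watson's lemma (Theorem \ref{watlem}).

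Next I would exploit the observation that $\partial_{\lambda} t^{\lambda-1} = t^{\lambda-1}\log t$ along the ray $\arg t=\gamma$, so that differentiating the model identity $m$ times under the integral sign (justified by its local uniform convergence in $\lambda$) gives $\int_{0}^{\infty e^{i\gamma}} t^{\lambda-1}(\log t)^m e^{-zt}\,dt = D^m\bigl[\Gamma(\lambda)z^{-\lambda}\bigr]$, where $D=d/d\lambda$. Since the identity $\int t^{\lambda-1}e^{-zt}\,dt=\Gamma(\lambda)z^{-\lambda}$ is an equality of functions holomorphic in $\lambda$, all its $\lambda$-derivatives agree and no branch ambiguity arises. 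By linearity, for any polynomial $P_n$ one obtains $\int_{0}^{\infty e^{i\gamma}} t^{\lambda_n-1} P_n(\log t) e^{-zt}\,dt = P_n(D_n)\bigl[\Gamma(\lambda_n)z^{-\lambda_n}\bigr]$. This is precisely the mechanism producing the operator $P_n(D_n)$ in the stated expansion: each term $c_n t^{\lambda_n-1}P_n(\log t)$ of the assumed expansion of $f$ is transformed term-by-term into $c_n P_n(D_n)[\Gamma(\lambda_n)z^{-\lambda_n}]$.

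I would then fix $N$, write $f(t)=\sum_{n=0}^{N} c_n t^{\lambda_n-1}P_n(\log t) + R_N(t)$ with $R_N(t)=o\bigl(t^{\lambda_N-1}(\log t)^{m(N)}\bigr)$ as $t\to0$ along $\arg t=\gamma$, and split $F(z)$ accordingly. The main terms integrate exactly to $\sum_{n=0}^{N} c_n P_n(D_n)[\Gamma(\lambda_n)z^{-\lambda_n}]$ by the previous paragraph, so everything reduces to showing that the remainder integral is $o\bigl(z^{-\lambda_N}(\log z)^{m(N)}\bigr)$. To control the remainder I would split its range at a fixed point $\delta e^{i\gamma}$. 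The hard part is the near-origin piece $\int_{0}^{\delta e^{i\gamma}} R_N(t)e^{-zt}\,dt$: given $\varepsilon>0$, the $o$-hypothesis supplies $\delta$ so that $|R_N(t)|\leq \varepsilon\,|t|^{\Re(\lambda_N)-1}|\log t|^{m(N)}$ on this segment, and after the substitution $t=ue^{i\gamma}$ one bounds the piece by $\varepsilon$ times a model integral $\int_{0}^{\infty} u^{\Re(\lambda_N)-1}(|\log u|+|\gamma|)^{m(N)} e^{-\Re(ze^{i\gamma})u}\,du$. Estimating this last integral by the same Gamma-integral and differentiation argument gives a bound of order $\varepsilon\,(\Re(ze^{i\gamma}))^{-\Re(\lambda_N)}(\log \Re(ze^{i\gamma}))^{m(N)}$, which, since $\Re(ze^{i\gamma})\asymp|z|$ uniformly in $S(\Delta)$ and $|z^{-\lambda_N}|\asymp|z|^{-\Re(\lambda_N)}$ because $\arg z$ is bounded there, is $\varepsilon$ times a quantity of the same magnitude as $|z^{-\lambda_N}|(\log|z|)^{m(N)}$; letting $\varepsilon\to0$ yields the required $o$-estimate.

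Finally, the tail $\int_{\delta e^{i\gamma}}^{\infty e^{i\gamma}} f(t)e^{-zt}\,dt$ must be shown to be negligible, and here the hypothesis that $F(z_0)$ exists becomes essential. Writing $e^{-zt}=e^{-z_0 t}\,e^{-(z-z_0)t}$ and using that $\Re\bigl((z-z_0)e^{i\gamma}\bigr)u$ grows with $u$ on the ray, one extracts an exponentially small factor of order $e^{-c\,\Re(ze^{i\gamma})\delta}$ from the tail, while the convergence of $\int_{0}^{\infty e^{i\gamma}} f(t)e^{-z_0 t}\,dt$ (handled by an integration-by-parts/Abel argument on this convergent, possibly non-absolutely convergent, integral) keeps the remaining factor bounded. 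This exponentially small contribution is dominated by every algebraic power $z^{-\lambda_N}$ and is therefore absorbed into the error term. The uniformity of all estimates in $S(\Delta)$ stems entirely from the uniform lower bound $\Re(ze^{i\gamma})\geq|z|\sin\Delta$, which controls both the decay of $e^{-zt}$ and the boundedness of $\arg z$. I expect the principal technical obstacle to be the near-origin remainder estimate together with its uniformity in the sector, since it requires converting the pointwise $o$-bound on $f$ into a uniform $o$-bound on its Laplace transform in the presence of the logarithmic weights.
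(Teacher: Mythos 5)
The paper itself does not prove Theorem \ref{watsongen}: it is imported verbatim as a preliminary tool from Wong and Wyman \cite[Theorem 4.1]{wong-wyman}, so there is no internal proof to compare yours against. Judged on its own merits, your proposal is correct in outline and follows what is essentially the classical route (and, in substance, the Wong--Wyman route): establish the model identity $\int_{0}^{\infty e^{i\gamma}} t^{\lambda-1}e^{-zt}\,dt=\Gamma(\lambda)z^{-\lambda}$ on the rotated ray (the branch bookkeeping $e^{i\gamma\lambda}\Gamma(\lambda)(ze^{i\gamma})^{-\lambda}=\Gamma(\lambda)z^{-\lambda}$ is legitimate precisely because $|\arg(ze^{i\gamma})|\le\tfrac{\pi}{2}-\Delta$ keeps all arguments principal); generate the logarithmic weights by differentiating in $\lambda$ under the integral sign, which is exactly what produces the operators $P_n(D_n)$; split $f$ into the partial sum plus $R_N$; convert the pointwise $o$-bound on $R_N$ near $t=0$ into the uniform $o$-bound on its Laplace transform; and control the tail using the existence of $F(z_0)$ via Abel summation/integration by parts, with all uniformity traced to $\Re(ze^{i\gamma})\ge |z|\sin\Delta$. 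These are the right ingredients in the right places, and your identification of the near-origin remainder estimate as the technical crux is accurate.

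One loose end should be tied up for completeness. After you split the remainder integral at $\delta e^{i\gamma}$, the tail piece is $\int_{\delta e^{i\gamma}}^{\infty e^{i\gamma}} R_N(t)e^{-zt}\,dt$, and on that range $R_N$ equals $f$ \emph{minus} the $N+1$ main terms; your final paragraph only disposes of the $f$-part. You must also note that each $\int_{\delta e^{i\gamma}}^{\infty e^{i\gamma}} t^{\lambda_n-1}P_n(\log t)e^{-zt}\,dt$ is exponentially small: writing $X=\Re(ze^{i\gamma})$ and $e^{-Xu}\le e^{-X\delta/2}e^{-X_0u/2}$ for $u\ge\delta$ and any fixed $X_0$ with $X\ge X_0$, the integral is $O\bigl(e^{-X\delta/2}\bigr)$ uniformly in $S(\Delta)$, hence absorbed into the error term just like the $f$-tail. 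This is a routine addition, not a flaw in the method; with it included, your argument is a sound, self-contained proof of the quoted theorem.
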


	\section{Proofs of the extension of Ramanujan's formula \eqref{w1.26} and its analogue}

 \begin{proof}[Theorem \textup{\ref{ramanujan generalization psi_k infinite}}][]
	We first prove the result for $\alpha, \beta>0$ and then extend it to Re$(\alpha)>0$, Re$(\beta)>0$ by analytic continuation. For any positive real number $x$ and $0<\textup{Re}(z)<2$, let us denote  
	\begin{align}
		F(z,x) := \sum_{n=1}^{\infty} \varphi(z, n x)-\frac{\zeta(z)}{2 x^z}-\frac{\zeta(z-1)}{x (z-1)} \label{F(x)} ,
	\end{align}
	where $\varphi(z,x)= \zeta(z,x)-\frac{1}{2} x^{-z}+\frac{x^{1-z}}{1-z}$, for any $x \in \mathbb{R}^{+}$, so that from \eqref{mainneq2}, for $\alpha \beta =1$,
	\begin{align}
	 \alpha^{\frac{z}{2}} F(z,\alpha)=\beta^{\frac{z}{2}} F(z,\beta).\label{IJNT-Ramanujan}
	\end{align}
	Differentiate \eqref{IJNT-Ramanujan} $k$ times with respect to $z$ and the let $z \rightarrow 1$, thereby obtaining
	\begin{align}
		\lim_{z \to 1} \frac{d^k}{dz^k} \left(\alpha^{\frac{z}{2}} F(z,\alpha) \right) =
		\lim_{z \to 1} \frac{d^k}{dz^k} \left(\beta^{\frac{z}{2}} F(z,\beta) \right) \label{diffIJNT} . 
	\end{align}
	By Leibnitz rule,
	\begin{align}
		\lim_{z \to 1} \frac{d^k}{dz^k} \left(x^{\frac{z}{2}} F(z,x) \right) = 	\sqrt{x}	\sum_{j=0}^{k} \binom{k}{j} \log^{k-j}(\sqrt{x})\lim_{z \to 1} \frac{d^j}{dz^j} F(z,x) \label{Leibniz}.
	\end{align}
	We shall now compute $	\lim_{z \to 1} \frac{d^j}{dz^j} F(z,x)$ for $x \in \mathbb{R}$, $0 \leq j \leq k$.   
	Differentiate both sides of \eqref{F(x)} $j$ times with respect to $z$ (which is justified since the series $\sum_{n=1}^{\infty}\varphi(z, nx)$ is uniformly convergent in $0<\textup{Re}(z)<2$; see \cite[p.~1159]{dixitijnt} for details) to get
	\begin{align}
		\frac{d^j}{dz^j}F(z,x)  &= \sum_{n=1}^{\infty} \left( \zeta^{(j)}(z, nx)- \frac{1}{2}(-1)^j \log^j(nx) (nx)^{-z} - \sum_{m=0}^{j} \binom{j}{m} \frac{(-1)^j m! \log^{j-m}(nx) (nx)^{1-z}}{(z-1)^{m+1}} \right) \notag \\
		&\hspace{1cm}-\frac{1}{2}\sum_{\ell=0}^{j} \binom{j}{\ell} (-1)^{j-\ell} \log^{j-\ell}(x) \zeta^{(\ell)}(z) x^{-z} - \sum_{\ell=0}^{j} \binom{j}{\ell} \frac{(-1)^{j-\ell} (j-\ell)! \zeta^{(\ell)}(z-1) }{x (z-1)^{j-\ell+1}} \notag \\
		&	=S_1(z,x) + S_2(z,x) \label{differentiated},
	\end{align}
	where
	\begin{align}
		&S_1(z,x):= \sum_{n=1}^{\infty} \left( \zeta^{(j)}(z, nx)- \frac{1}{2}(-1)^j \log^j(nx) (nx)^{-z} - \sum_{m=0}^{j} \binom{j}{m} \frac{(-1)^j m! \log^{j-m}(nx) (nx)^{1-z}}{(z-1)^{m+1}} \right), \label{S1} \\
		&S_2(z,x) := \sum_{\ell=0}^{j} \binom{j}{\ell} (-1)^{j-\ell+1} \left( \frac{ \log^{j-\ell}(x) \zeta^{(\ell)}(z) }{2x^z}+ \frac{ (j-\ell)! \zeta^{(\ell)}(z-1) }{x (z-1)^{j-\ell+1}} \right). \label{S2}
	\end{align}
	Our first goal is to explicitly evaluate $S_1(z,x)$ as $z \rightarrow 1 $. From \eqref{hurwitzlaurent} and \eqref{psigamma},
	\begin{align*}
		\zeta(z,a) = \frac{1}{z-1} + \sum_{\ell=0}^{\infty} (-1)^{\ell+1} \frac{\psi_\ell(a)}{\ell!} (z-1)^\ell
	\end{align*}
so that		
	\begin{align}
		\lim_{z \to 1}\left( \zeta^{(j)}(z, nx) + \frac{(-1)^{j+1} j!}{(z-1)^{j+1}  } \right) = (-1)^{j+1}  \psi_j(nx) \label{hurwitz_psi_j} .
	\end{align}	
	Now add and subtract $\frac{(-1)^{j+1} j!}{(z-1)^{j+1}}$ from the summand of $S_1(z,x)$ in \eqref{S1}, let $z \rightarrow 1$, and then use \eqref{hurwitz_psi_j} to have
	\begin{align}
		\lim_{z \to 1}	S_1(z,x) 
&=\sum_{n=1}^{\infty} \Bigg(\lim_{z \to 1}\left( \zeta^{(j)}(z, nx) + \frac{(-1)^{j+1} j!}{(z-1)^{j+1}  } \right)  +\lim_{z \to 1} \frac{1}{2}(-1)^{j+1} \log^j(nx) (nx)^{-z} \notag \\
		&\hspace{1cm}- \lim_{z \to 1} \left\{ \sum_{m=0}^{j} \binom{j}{m} \frac{(-1)^j m! \log^{j-m}(nx) (nx)^{1-z}}{(z-1)^{m+1}}+\frac{(-1)^{j+1} j!}{(z-1)^{j+1}  }\right\} \Bigg) \notag \\
		&= \sum_{n=1}^{\infty} \Bigg((-1)^{j+1}  \psi_j(nx)+    \frac{(-1)^{j+1} \log^j(nx) }{2nx} - \lim_{z \to 1} \frac{S_3(z, x)}{(z-1)^{j+1}} \Bigg) \label{firstlimitS1} ,
	\end{align}
	where 
	\begin{align}
		S_3(z, x)=  (-1)^{j+1} j! +\sum_{m=0}^{j} \binom{j}{m} (-1)^j m! \log^{j-m}(nx) (nx)^{1-z} (z-1)^{j-m} \label{S3(z,x)} .
	\end{align}
	Clearly, $S_3(1, x)=0$ since $j\geq1$. We shall now show that $S_3(z, x)$ has zero of order $j+1$ at $z=1$.
	For $1 \leq i \leq j$, differentiate  both sides of  \eqref{S3(z,x)} $i$ times with respect to $z$, and then let $z \rightarrow 1$ so as to have
	\begin{align*}
		\lim_{z \to 1} 	\frac{d^i}{dz^i} S_3(z,x)=  \sum_{m=0}^{j} \binom{j}{m} (-1)^j m! \log^{j-m}(nx) \sum_{r=0}^{i} \binom{i}{r} \lim_{z \to 1}\left\{  \frac{d^{i-r}}{dz^{i-r}} (nx)^{1-z} \frac{d^r}{dz^r}(z-1)^{j-m}\right\}.
	\end{align*}
	Here we note that, if $ j- m< r$, then $\frac{d^{r}}{dz^{r}} (z-1)^{j-m} =0 $. Also, if $ j-m>r$, then $ 	\lim_{z \to 1}  \frac{d^{r}}{dz^{r}}(z-1)^{j-m} =0 $. Hence the inner sum survives only for $ r=j-m$. Also, this is possible only when $ j-m \leq i$, i.e. $ m \geq j-i $. Therefore,
	\begin{align*}
		\lim_{z \to 1} 	\frac{d^i}{dz^i} S_3(z,x)
		&= \sum_{m=j-i}^{j} \binom{j}{m}  (-1)^j m! \log^{j-m}(nx) \binom{i}{j-m}  \log^{i-j+m}(nx) (-1)^{i-j+m} (j-m)!
		\\
		&= j! \log^i(nx) \sum_{m=j-i}^{j} (-1)^{i+m} \binom{i}{j-m} \\
		&= (-1)^j j! \log^i(nx) \sum_{m=0}^{i} (-1)^m \binom{i}{i-m}\\
		&=0,
	\end{align*}
	where, in the penultimate step we replaced $m$ by $m+j-i$ and then used the binomial theorem. Therefore, using L'Hospital's rule and observing that only the $r=m+1$ term survives in the inner finite sum in the second step below, we find that
	\begin{align}
		\lim_{z \to 1} \frac{S_3(z,x)}{(z-1)^{j+1}} 
		&= \lim_{z \to 1} \frac{  \frac{d^{j+1}}{dz^{j+1}} S_3(z,x)}{ \frac{d^{j+1}}{dz^{j+1}} (z-1)^{j+1}}	\notag \\
		&= \frac{1}{(j+1)!}\lim_{z \to 1} \sum_{m=0}^{j} \binom{j}{m} (-1)^j m! \log^{j-m}(nx) \sum_{r=0}^{j+1} \binom{j+1}{r} \frac{d^r}{dz^r} (nx)^{1-z} \frac{d^{j+1-r}}{dz^{j+1-r}}(z-1)^{j-m} \notag \\
		&= \frac{1}{(j+1)!} \sum_{m=0}^{j} \binom{j}{m}  (-1)^j m! \log^{j-m}(nx) \binom{j+1}{m+1}  \log^{m+1}(nx) (-1)^{m+1} (j-m)! \notag \\
		&=  \frac{(-1)^{j+1} j! \log^{j+1}(nx)}{(j+1)!}  \sum_{m=0}^{j}  (-1)^{m}   \binom{j+1}{m+1} \notag \\
		&= \frac{(-1)^{j+1} j! \log^{j+1}(nx)}{(j+1)!} \label{limitS3} , 
	\end{align}
	where in the last step we used the binomial theorem. From \eqref{firstlimitS1} and \eqref{limitS3}, we thus have
	\begin{align}
		\lim_{z \to 1}	S_1(z,x) = (-1)^{j+1} \sum_{n=1}^{\infty} \Bigg\{ \psi_j(nx)+    \frac{ \log^j(nx) }{2nx} -  \frac{ \log^{j+1}(nx)}{j+1} \Bigg\} \label{secondlimitS1} .
	\end{align}
	Next, we compute $\lim_{z \to 1}	S_2(z,x)$. Add and subtract $\binom{j}{\ell}\frac{(-1)^{j+1}\ell!\log^{j-\ell}(x)}{2x^z(z-1)^{\ell+1}}$ from the summand of  $S_2(z,x)$ in \eqref{S2} to get
	\begin{align}
		S_2(z,x)  
		= \sum_{\ell=0}^{j} \binom{j}{\ell} (-1)^{j-\ell+1}  \frac{ \log^{j-\ell}(x) }{2x^z}\left\{ \zeta^{(\ell)}(z) - \frac{(-1)^\ell \ell!}{(z-1)^{\ell+1}}\right\}-\frac{\left(S_4(z,x)+S_5(z,x)\right)}{2x^z(z-1)^{j+1}} \label{S2simplified} ,
	\end{align}
	where, with $g_\ell(z) := (z-1)^\ell x^{z-1} \zeta^{(\ell)}(z-1), 0 \leq \ell \leq j$, we define
	\begin{align}
		S_4(z,x)&:= 2\sum_{\ell=0}^{j} (-1)^{j-\ell}  \binom{j}{\ell}   (j-\ell)! g_\ell(z), \label{S4} \\
		S_5(z,x)&:= \sum_{\ell=0}^{j} (-1)^j \binom{j}{\ell} \ell! \log^{j-\ell}(x) (z-1)^{j-\ell}. \label{S5}
	\end{align}
	Now we show that $S_4(z,x)+S_5(z,x)$ has a zero of order $j+1$ at $z=1$. 
	Note that $g_\ell(z) =  h_1(z) h_2(z)$, where $h_1(z) = (z-1)^\ell, h_2(z)= x^{z-1} \zeta^{(\ell)}(z-1). $ Note that $ \lim_{z \to 1} \frac{d^s}{dz^s}h_1(z) =0 $ unless $s=\ell$. Also,  $ \lim_{z \to 1} \frac{d^\ell}{dz^\ell}h_1(z) =\ell!. $ Therefore,
	\begin{align}
		\lim_{z \to 1} \frac{d^i}{dz^i}g_\ell(z) &= \lim_{z \to 1} \sum_{s=0}^{i}  \binom{i}{s} \frac{d^s}{dz^s}h_1(z) \frac{d^{i-s}}{dz^{i-s}} h_2(z) \notag \\
		&=    \begin{cases}
			0 & \text{if}~ i<\ell, \\
			\binom{i}{\ell} \ell! \lim_{z \to 1} \frac{d^{i-\ell}}{dz^{i-\ell}} h_2(z)  & \text{if} ~ i \geq \ell.
		\end{cases} \label{derivativeg_l}
	\end{align}
	For $i \geq \ell$, 
	\begin{align}
		\lim_{z \to 1} \frac{d^{i-\ell}}{dz^{i-\ell}}h_2(z)&=
		\lim_{z \to 1} \sum_{r=0}^{i-\ell} \binom{i-\ell}{r} \log^r(x) x^{z-1} \zeta^{(i-r)}(z-1) \notag \\
		&= \sum_{r=0}^{i-\ell} \binom{i-\ell}{r} \log^r(x)  \zeta^{(i-r)}(0) \label{derivativeh_2} .
	\end{align}
	Substituting \eqref{derivativeh_2} in \eqref{derivativeg_l}, we get
	\begin{align}
		\lim_{z \to 1} \frac{d^i}{dz^i}g_\ell(z) =    \begin{cases}
			0 & \text{if}~ i<\ell,  \\
			\binom{i}{\ell} \ell! \sum_{r=0}^{i-\ell} \binom{i-\ell}{r} \log^r(x)  \zeta^{(i-r)}(0) & \text{if}~ i \geq \ell.
		\end{cases}  \label{finallimitg_l}
	\end{align}
	Substitute \eqref{finallimitg_l} in \eqref{S4} and simplify to obtain
	\begin{align}
		\lim_{z \to 1} 	\frac{d^i}{dz^i} S_4(z,x)
		=  2 j !\sum_{\ell=0}^{i} (-1)^{j-\ell}   \binom{i}{\ell} \sum_{r=0}^{i-\ell} \binom{i-\ell}{r} \log^r(x)  \zeta^{(i-r)}(0).\label{claimlimitS4} 
	\end{align}
	Similarly, it can be seen that
	\begin{align}
		\lim_{z \to 1} 	\frac{d^i}{dz^i} S_5(z,x)
		=  j! (-1)^j \log^i(x) \label{limitS5} .
	\end{align}
	Hence from \eqref{claimlimitS4} and \eqref{limitS5}, for $0 \leq i \leq j$,
	\begin{align*}
		&\lim_{z \to 1} 	\frac{d^i}{dz^i} (S_4(z,x)+S_5(z,x))  \\
		&= j !\Big\{2 (-1)^{j-i}\sum_{\ell=0}^{i} (-1)^{i-\ell}   \binom{i}{\ell} \sum_{r=0}^{i-\ell} \binom{i-\ell}{r} \log^r(x)  \zeta^{(i-r)}(0)+  (-1)^j \log^i(x) \Big\}\\
		&= j !\Big\{2 (-1)^{j-i}\sum_{r=0}^{i} \log^r(x)  \zeta^{(i-r)}(0)  \sum_{\ell=0}^{i-r} (-1)^{i-\ell}   \binom{i}{\ell} \binom{i-\ell}{r} +  (-1)^j \log^i(x) \Big\}\\
		&=j !\Big\{2 (-1)^{j-i}\sum_{r=0}^{i} (-1)^{r}  \binom{i}{r}  \log^r(x)  \zeta^{(i-r)}(0)  \sum_{\ell=0}^{i-r} (-1)^{i-r-\ell} \binom{i-r}{i-r-\ell}  +  (-1)^j \log^i(x) \Big\}\\
		&= j! \Big\{2 (-1)^{j} \zeta(0)   \log^i(x) +  (-1)^j \log^i(x) \Big\} \\
		&=0,
	\end{align*}
where, in the penultimate step, we used the binomial theorem to see that the inner sum vanishes unless $r=i$, and in which case is equal to $1$. Also, in the last step we have used the fact that $\zeta(0) = -\frac{1}{2}$. Hence $S_4(z,x)+S_5(z,x)$ has a zero of order $j+1$ at $z=1$. Therefore by L'Hospital's rule,
	\begin{align*}
		\lim_{z \to 1} \frac{S_4(z,x)+S_5(z,x)}{2x^z(z-1)^{j+1}} =\frac{1}{2x}\lim_{z \to 1} \frac{ 	\frac{d^{j+1}}{dz^{j+1}} (S_4(z,x)+S_5(z,x))}{\frac{d^{j+1}}{dz^{j+1}} (z-1)^{j+1}}.
	\end{align*}
	Using \eqref{S4} and \eqref{S5}, we see that the above limit equals
	\begin{align*}
		 \frac{\lim_{z \to 1}\left[2\sum_{\ell=0}^{j} (-1)^{j-\ell}  \binom{j}{\ell}   (j-\ell)! 	\frac{d^{j+1}}{dz^{j+1}} g_\ell(z) +   \sum_{\ell=0}^{j} (-1)^j \binom{j}{\ell} \ell! \log^{j-\ell}(x) \frac{d^{j+1}}{dz^{j+1}}  (z-1)^{j-\ell}\right]}{2x(j+1)!}.
	\end{align*}
	Observe that for all $0 \leq \ell \leq j$, $\frac{d^{j+1}}{dz^{j+1}} (z-1)^{j-\ell}=0$, and hence, from \eqref{finallimitg_l},
	\begin{align*}
		\lim_{z \to 1} \frac{d^{j+1}}{dz^{j+1}}g_\ell(z) =   
		\binom{j+1}{\ell} \ell! \sum_{r=0}^{j+1-\ell} \binom{j+1-\ell}{r} \log^r(x)  \zeta^{(j+1-r)}(0).
	\end{align*}
	Therefore,
	\begin{align}
		\lim_{z \to 1} & \frac{S_4(z,x)+S_5(z,x)}{2x^z(z-1)^{j+1}} = \frac{1}{x(j+1)} \sum_{\ell=0}^{j} (-1)^{j-\ell} \binom{j+1}{\ell}  \sum_{r=0}^{j+1-\ell} \binom{j+1-\ell}{r} \log^r(x)  \zeta^{(j+1-r)}(0) \notag \\
		&= \frac{1}{x(j+1)}\Big[\sum_{\ell=0}^{j} (-1)^{j-\ell} \binom{j+1}{\ell} \zeta^{(j+1)}(0) + \sum_{\ell=0}^{j} (-1)^{j-\ell} \binom{j+1}{\ell}  \sum_{r=1}^{j+1-\ell} \binom{j+1-\ell}{r} \log^r(x)  \zeta^{(j+1-r)}(0)\Big] \notag \\
		&= \frac{1}{x(j+1)}\Big[ \zeta^{(j+1)}(0)\Big\{1-\sum_{\ell=0}^{j+1} (-1)^{j+1-\ell} \binom{j+1}{\ell}\Big\} \notag \\
		&\hspace{2cm}+ \sum_{\ell=0}^{j} (-1)^{j-\ell} \binom{j+1}{\ell}  \sum_{r=0}^{j-\ell} \binom{j+1-\ell}{r+1} \log^{r+1}(x)  \zeta^{(j-r)}(0)\Big] \notag \\
		&=\frac{1}{x(j+1)}\Big[ \zeta^{(j+1)}(0) + \sum_{r=0}^{j} \log^{r+1}(x)  \zeta^{(j-r)}(0) \sum_{\ell=0}^{j-r} (-1)^{j-\ell} \binom{j+1}{\ell}  \binom{j+1-\ell}{r+1} \Big] \notag \\
		&=\frac{1}{x(j+1)}\Big[ \zeta^{(j+1)}(0) + \sum_{r=0}^{j} (-1)^r \binom{j+1}{r+1} \log^{r+1}(x)  \zeta^{(j-r)}(0)  \sum_{\ell=0}^{j-r} (-1)^{j-r-\ell} \binom{j-r}{j-r-\ell}  \Big] \notag \\
		&=\frac{1}{x(j+1)}\Big[ \zeta^{(j+1)}(0) + (-1)^j \log^{j+1}(x)  \zeta(0)    \Big] \notag \\
		&=\frac{2 \zeta^{(j+1)}(0)+(-1)^{j+1} \log^{j+1}(x)}{2x(j+1)}. \label{limitS4S5}
	\end{align}
	Let $z \rightarrow 1$ in \eqref{S2simplified} and employ \eqref{limitS4S5} to get
	\begin{align}
		\lim_{z \to 1}	S_2(z,x) 
		&= \sum_{\ell=0}^{j} \binom{j}{\ell} (-1)^{j+1}  \frac{ \log^{j-\ell}(x) \gamma_\ell }{2x}  - \frac{2 \zeta^{(j+1)}(0)+(-1)^{j+1} \log^{j+1}(x)}{2x(j+1)}	\label{limitS2},
	\end{align}
	since 
	\begin{align*}
		\lim_{z \to 1}\left\{ \zeta^{(\ell)}(z) - \frac{(-1)^\ell \ell!}{(z-1)^{\ell+1}}\right\}= (-1)^\ell \gamma_\ell.
	\end{align*}
	From \eqref{differentiated}, \eqref{secondlimitS1} and \eqref{limitS2},
	\begin{align}
		\lim_{z \to 1} \frac{d^j}{dz^j} F(z,x)
		&= (-1)^{j+1} \left[ \sum_{n=1}^{\infty} \Bigg\{ \psi_j(nx)+    \frac{ \log^j(nx) }{2nx} -  \frac{ \log^{j+1}(nx)}{(j+1)} \Bigg\} \right.\notag \\
		&\hspace{5mm} +\left.\sum_{\ell=0}^{j} \binom{j}{\ell}  \frac{ \log^{j-\ell}(x) \gamma_\ell }{2x}  - \frac{(-1)^{j+1} 2 \zeta^{(j+1)}(0)+ \log^{j+1}(x)}{2x(j+1)}	 \right] \label{lastlimit} .
	\end{align}
	Finally, substituting \eqref{lastlimit} in \eqref{Leibniz} and using \eqref{diffIJNT} we arrive at \eqref{Ramanujan Identity} for $\alpha, \beta>0$. Since both sides of \eqref{Ramanujan Identity} are analytic in Re$(\alpha)>0$, Re$(\beta)>0$, the result holds in the extended region as well.  
\end{proof}

When $k=1$, in addition to the modular relation in the theorem proved above, we also derive an analogue of the integral containing the $\Xi$-function in \eqref{w1.26}.
 \begin{proof}[Theorem \textup{\ref{ramanujan analogue psi_1 infinite}}][]
	Define 
	\begin{align}\label{j z alpha}
		J(z, \alpha)	:= \frac{8 (4 \pi)^{\frac{z-4}{2}}} {\Gamma(z)} I(z, \alpha),
	\end{align}
	where 
	\begin{equation}\label{i zee}
	I(z, \alpha):= \int_{0}^{\infty} \omega(z,t)\cos\left(\tf{1}{2}t\log\a\right)\, dt,
\end{equation}
 and  $\omega(z, t)$ is defined in \eqref{omdef}. Differentiate extreme sides of \eqref{mainneq2} with respect to $z$, then let $z\to 1$ and use \eqref{F(x)} to get
	\begin{align}\label{extra}
		\sqrt{\alpha} \sum_{j=0}^{1} \binom{1}{j} \frac{\log^{1-j}(x)}{2^{1-j}} 	\lim_{z \to 1} \frac{d^j}{dz^j} \left(F(z, \alpha) \right)
		=\lim_{z\to 1}\frac{d}{dz}J(z, \alpha).
		\end{align}
		Substituting \eqref{lastlimit} with $x=\alpha$ on the left-hand side of the above equation, we see that
		\begin{align}\label{lhs extra}
			&\sqrt{\alpha} \sum_{j=0}^{1} \binom{1}{j} \frac{\log^{1-j}(x)}{2^{1-j}} 	\lim_{z \to 1} \frac{d^j}{dz^j} \left(F(z, \alpha) \right)\notag\\
			&= \sqrt{\alpha} \sum_{j=0}^{1} \frac{\log^{1-j}(\alpha)}{2^{1-j}} (-1)^{j+1} \left[ \sum_{n=1}^{\infty} \Bigg\{ \psi_j(n\alpha)+    \frac{ \log^j(n\alpha) }{2n\alpha} -  \frac{ \log^{j+1}(n\alpha)}{(j+1)} \Bigg\} \right. \notag \\
			& \left. +\sum_{\ell=0}^{j} \binom{j}{\ell}  \frac{ \log^{j-\ell}(\alpha) \gamma_\ell }{2\alpha}  - \frac{(-1)^{j+1} 2 \zeta^{(j+1)}(0)+ \log^{j+1}(\alpha)}{2\alpha(j+1)}	 \right]\notag\\
			&=\mathscr{F}_1(\alpha),
		\end{align}
		where $\mathscr{F}_1(\alpha)$ is defined in \eqref{ef1}, and where we used the representation \cite[p.~226]{apostol1}
		\begin{equation}\label{zeta''0} \zeta''(0)=-\frac{1}{2}\log^{2}(2\pi)-\frac{\pi^2}{24}+\frac{1}{2}\gamma^2+\gamma_1.
		\end{equation}
	Also,
		\begin{align}
		\lim_{z\to 1}\frac{d}{dz}J(z, \alpha)&= \lim_{z \to 1} \left[ \frac{8 (4 \pi)^{\frac{z-4}{2}}} {\Gamma(z)} I'(z, \alpha) + \frac{\Gamma(z)8 (4 \pi)^{\frac{z-4}{2}} \log(4\pi) \frac{1}{2}-8 (4 \pi)^{\frac{z-4}{2}}\Gamma'(z)}{\Gamma^{2}(z)} I(z, \alpha) \right]\notag\\
		&=\pi^{\frac{-3}{2}}\left\{ \lim_{z \to 1}  I'(z, \alpha) + \left(  \frac{1}{2} \log(4\pi)+\gamma \right) \lim_{z \to 1}  I(z, \alpha) \right\}. \label{II'}
	\end{align}
	Take logarithmic derivative on both sides of \eqref{omdef}  with respect to $z$ to get
	\begin{align*}
		\frac{\omega'(z,t)}{\omega(z,t)}= \frac{1}{4} \psi \left( \frac{z-2+it}{4} \right) + \frac{1}{4} \psi \left( \frac{z-2-it}{4} \right) + \frac{i}{2} \frac{\Xi'\left( \frac{t+iz-i}{2} \right)}{\Xi \left( \frac{t+iz-i}{2} \right)} - \frac{i}{2} \frac{\Xi'\left( \frac{t-iz+i}{2} \right)}{\Xi \left( \frac{t-iz+i}{2} \right)} - \frac{2z}{z^2+t^2}.		
	\end{align*}
	Now let $z \to 1$ so that
	\begin{align*}
		\omega'(1,t)= \omega(1,t) \left\{ \frac{1}{4} \psi \left( \frac{-1+it}{4} \right) + \frac{1}{4} \psi \left( \frac{-1-it}{4} \right) - \frac{2}{1+t^2} \right\}.		
	\end{align*}
	Observe that from \eqref{i zee},
	\begin{align*}
		\lim_{z \to 1} I'(z, \alpha) &= \int_{0}^{\infty} \lim_{z \to 1} \omega'(z,t)\cos\left(\tf{1}{2}t\log\a\right) \, dt\\
		&=\int_{0}^{\infty} \omega(1,t) \left\{ \frac{1}{4} \psi \left( \frac{-1+it}{4} \right) + \frac{1}{4} \psi \left( \frac{-1-it}{4} \right) - \frac{2}{1+t^2} \right\} \cos\left(\tf{1}{2}t\log\a\right)\, dt,
	\end{align*}
	where the interchange of the order of limit and integration is permissible because of the uniform convergence of the integral in $0<\textup{Re}(z)<2$, which, in turn, follows from \eqref{strivert} and elementary bounds of the zeta function. Substitute the above equation in \eqref{II'}, simplify and use \eqref{mathscr i alpha} to arrive at 
	\begin{align}\label{used in asymptote}
	\lim_{z\to 1}\frac{d}{dz}J(z, \alpha)=\mathscr{I}(\alpha).
	\end{align}
	Thus, from \eqref{extra}, \eqref{lhs extra} and \eqref{used in asymptote}, we see that 	$\mathscr{F}_1(\alpha)=\mathscr{I}(\alpha)$. Noting that $\mathscr{I}(\alpha)$ is invariant upon replacing $\alpha$ by $\beta$, where $\alpha\beta=1$, we get the remaining equalities in \eqref{k=1}, which completes the proof.
\end{proof}	
	
	\subsection{Proof of the asymptotic expansion of the $\Xi$-function integral}

 \begin{proof}[Theorem \textup{\ref{asymptotic_xit}}][]
From \cite[Equation (6.2)]{drzacta}, for $-1 < \Re z <1$ and Re$(\alpha)>0$\footnote{In \cite{drzacta}, this result is stated for $\alpha>0$, however, by analytic continuation, it can be easily seen to be true for Re$(\alpha)>0$.}, 
	\begin{align}
		\alpha^{(z+1)/2} \int_{0}^{\infty} e^{-2 \pi \alpha x} x^{z/2} \left( \Omega(x,z) - \frac{1}{2\pi} \zeta(z)x^{z/2-1} \right) \, dx
		= \frac{1} {2 \pi^{(z+5)/2}}I(z+1, \alpha), \label{omegaXi}
	\end{align}	
	where, $$\Omega(x,z)= 2 \sum_{n=1}^{\infty} \sigma_{-z}(n) n^{z/2} \left(e^{\pi iz/4}K_z(4\pi e^{\pi i/4} \sqrt{nx})+ e^{-\pi iz/4}K_z(4\pi e^{-\pi i/4} \sqrt{nx})\right),$$
	with $K_z(x)$ denoting the modified Bessel function of the second kind and $\sigma_{-z}(n)= \sum_{d|n} d^{-z}$. Therefore, from \eqref{j z alpha} and \eqref{omegaXi},
\begin{align}
\alpha^{(z+1)/2} \int_{0}^{\infty} e^{-2 \pi \alpha x} x^{z/2} \left( \Omega(x,z) - \frac{1}{2\pi} \zeta(z)x^{z/2-1} \right) \, dx= \frac{\Gamma(z+1)}{(2\pi)^{z+1}} J(z+1, \alpha). \label{omegaXi2}
	\end{align}
	Differentiate both sides of \eqref{omegaXi2} with respect to $z$ and then let $z\to 0$ so as to have
	\begin{align}
		&\lim_{z\to 0}\frac{d}{dz} \left[ \alpha^{(z+1)/2} \int_{0}^{\infty} e^{-2 \pi \alpha x} x^{z/2} \left( \Omega(x,z) - \frac{1}{2\pi} \zeta(z)x^{z/2-1} \right) \, dx \right] \notag \\
		&=\frac{1}{2\pi} \lim_{z\to 0}\frac{d}{dz}J(z+1, \alpha)
		- \frac{\gamma+\log(2\pi)}{(2\pi)} J( 1, \alpha),\label{omegaXidiff}
	\end{align}
	since $\Gamma'(1)=-\gamma$.
	Our next task is to obtain the asymptotic expansion of the left-hand side as $\alpha \to \infty$. 
	Note that from \cite[Proposition 6.1]{dixitmoll}, for $-1<\textup{Re}(z)<1$,
	\begin{align}\label{omega x z}
		\Omega(x,z) - \frac{1}{2\pi} \zeta(z)x^{z/2-1}  =\frac{-\Gamma(z)\zeta(z)}{(2\pi \sqrt{x})^z} -\frac{x^{z/2}}{2}\zeta(z+1) + \frac{x^{z/2+1}}{\pi} \sum_{n=1}^{\infty} \frac{\sigma_{-z}(n)}{n^2+x^2},
	\end{align}
	which, upon letting $z\to0$ gives
	\begin{equation}\label{omega x 0}
		\Omega(x,0) + \frac{1}{4\pi x} = -\gamma -\frac{1}{2}\log(x) +\frac{x}{\pi} \sum_{n=1}^{\infty} \frac{d(n)}{n^2+x^2}.
	\end{equation}
Using \eqref{omega x z}, we have
	\begin{align*}
		&\frac{d}{dz} \left[ (\alpha x)^{z/2} \left( \Omega(x,z) - \frac{1}{2\pi} \zeta(z)x^{z/2-1} \right) \right] \\
		=&  \frac{(\alpha x)^{z/2} \log(\alpha x)}{2}  \left( \Omega(x,z) - \frac{1}{2\pi} \zeta(z)x^{z/2-1} \right) + (\alpha x)^{z/2} \frac{d}{dz} \left( \frac{-\Gamma(z)\zeta(z)}{(2\pi \sqrt{x})^z} -\frac{x^{z/2}}{2}\zeta(z+1) + \frac{x^{z/2+1}}{\pi} \sum_{n=1}^{\infty} \frac{\sigma_{-z}(n)}{n^2+x^2} \right)\\
		=& \frac{(\alpha x)^{z/2} \log(\alpha x)}{2}  \left( \Omega(x,z) - \frac{1}{2\pi} \zeta(z)x^{z/2-1} \right) \\
		&+ (\alpha x)^{z/2} \left( -\frac{(\Gamma'(z)\zeta(z)+\Gamma(z)\zeta'(z)- \log(2\pi \sqrt{x}) \Gamma(z)\zeta(z))}{(2\pi \sqrt{x})^{z}}-\frac{x^{z/2}}{4}\log(x)\zeta(z+1)-\frac{x^{z/2}}{2}\zeta'(z+1)  \right. \\
		&\left.  + \frac{x^{z/2+1}}{2\pi}\log(x) \sum_{n=1}^{\infty} \frac{\sigma_{-z}(n)}{n^2+x^2} -  \frac{x^{z/2+1}}{\pi} \sum_{d=1}^{\infty} \sum_{m=1}^{\infty} \frac{d^{-z}\log(d)}{m^2d^2+x^2} \right).
	\end{align*}
Since 
	\small\begin{align*}
	\lim_{z\to0}(\alpha x)^{z/2} \left( -\frac{(\Gamma'(z)\zeta(z)+\Gamma(z)\zeta'(z)- \log(2\pi \sqrt{x}) \Gamma(z)\zeta(z))}{(2\pi \sqrt{x})^{z}}-\frac{x^{z/2}}{4}\log(x)\zeta(z+1)-\frac{x^{z/2}}{2}\zeta'(z+1)\right) =\frac{\pi^2}{16},
\end{align*}
\normalsize
which can be proved using the well-known power series/Laurent series expansions about $z=0$ of the functions involved, we obtain
	\begin{align}
	&\lim_{z\to0}\frac{d}{dz} \left[ (\alpha x)^{z/2} \left( \Omega(x,z) - \frac{1}{2\pi} \zeta(z)x^{z/2-1} \right) \right] \notag\\	=& \frac{\log(\alpha x)}{2}  \left( \Omega(x,0) + \frac{1}{4\pi x} \right) + \frac{\pi^2}{16} + \frac{1}{\pi} \left( \frac{x\log(x)}{2} \sum_{n=1}^{\infty} \frac{d(n)}{n^2+x^2} - \frac{x}{2} \sum_{n=1}^{\infty} \frac{d(n)\log(n)}{n^2+x^2} \right), \label{omegaderivative}
	\end{align}
where we used the fact $\sum_{d|n}\log(d)=\log(\prod_{d|n}d)=\frac{1}{2}d(n)\log(n)$ using the elementary result $\prod_{d|n}d=n^{d(n)/2}$ given, for example, in \cite[Exercise 10, p.~47]{Apostol}. 
Since the integral on the left-hand side of \eqref{omegaXidiff} converges uniformly in $-1<\textup{Re}(z)<1$ as can be seen from the bounds established for the integrand in \cite[Equations (9.1.2), (9.1.3)]{dkRiMS1}, from \eqref{omegaXidiff}  and \eqref{omegaderivative}, we have
	\begin{align}
		&\sqrt{\alpha} \lim_{z \to 0} \frac{d}{dz} \int_{0}^{\infty} e^{-2 \pi \alpha x} (\alpha x)^{z/2} \left( \Omega(x,z) - \frac{1}{2\pi} \zeta(z)x^{z/2-1} \right) \, dx\notag \\
		&=  \frac{\sqrt{\alpha} \log(\alpha)}{2} \int_{0}^{\infty} e^{-2 \pi \alpha x} \left( \Omega(x,0) + \frac{1}{4\pi x} \right) \, dx + \frac{\sqrt{\alpha}}{2} \int_{0}^{\infty} e^{-2 \pi \alpha x} \log(x) \left( \Omega(x,0) + \frac{1}{4\pi x} \right) \, dx \notag \\
		&+ \frac{\pi^2 \sqrt{ \alpha}  }{16} \int_{0}^{\infty} e^{-2 \pi \alpha x} \, dx  + \frac{\sqrt{\alpha}}{2\pi} \int_{0}^{\infty} xe^{-2 \pi \alpha x} \left( \log(x) \sum_{n=1}^{\infty} \frac{d(n)}{n^2+x^2} -  \sum_{n=1}^{\infty} \frac{d(n)\log(n)}{n^2+x^2} \right) \, dx\notag\\
		&=:H_1(\alpha)+H_2(\alpha)+H_3(\alpha)+H_4(\alpha). \label{LHSasy}
	\end{align}
	Equations (6.3) and (6.4) of \cite{drzacta}, which are obtained there for $\alpha>0$, actually hold for Re$(\alpha)>0$ (as can be seen from the sentence following Theorem \ref{watlem}). Thus together they imply that as $\alpha\to\infty$,
	\begin{align}
		H_1(\alpha) \sim \frac{\sqrt{\alpha} \log(\alpha)}{2} \left( \frac{-\gamma +\log(2\pi \alpha)}{4\pi \alpha} + \sum_{m=0}^{\infty} \frac{(-1)^m}{\pi (2\pi \alpha)^{2m+2}}\Gamma(2m+2) \zeta^2(2m+2) \right). \label{I1}
	\end{align}
	We now find the asymptotic expansion of $H_2(\alpha)$ as $\alpha\to\infty$. Using \eqref{omega x 0}, we have
	\begin{align}\label{H2 decomposition}
	H_2(\alpha)
		&=  \frac{\sqrt{\alpha}}{2} \int_{0}^{\infty} e^{-2 \pi \alpha x} \log(x) \left( -\gamma -\frac{1}{2}\log(x) \right) \, dx + \frac{\sqrt{\alpha} }{2} \int_{0}^{\infty} e^{-2 \pi \alpha x} \log(x) \frac{x}{\pi} \sum_{n=1}^{\infty} \frac{d(n)}{n^2+x^2}  \, dx\notag\\
		&=:H_{21}(\alpha)+H_{22}(\alpha).
	\end{align}
Using \cite[p.~571, formula \textbf{4.331.1}; p.~572, formula \textbf{4.335.1}]{grn},$H_{21}(\alpha)$ simplifies to
	\begin{align}
		H_{21}(\alpha)= \frac{1}{48 \pi \sqrt{\alpha}} \left( 6\gamma^2 - \pi^2 -6 \log^2(2) - 6\log(\pi)\log(4\pi) - 6\log(\alpha)\log(4\pi^2\alpha) \right).\label{I2first}
	\end{align}
	From \cite[p.~30]{drzacta}, as $x\to0$,
	\begin{align*}
		\frac{x}{\pi} \sum_{n=1}^{\infty} \frac{d(n)}{n^2+x^2} \sim \frac{1}{\pi}\sum_{m=0}^{\infty} (-1)^m \zeta^2(2m+2) x^{2m+1}.
	\end{align*}
	We are now ready to employ the generalization of Watson's lemma given in Theorem \ref{watsongen} with $p_m(x)= x$, $\lambda_m=2m+2$, $c_n= \frac{1}{\pi}(-1)^{m} \zeta^2(2m+2)$, $m\geq0$, which leads to
	\begin{align}
	H_{22}(\alpha) &\sim \frac{\sqrt{\alpha}}{2\pi} \sum_{m=0}^{\infty} (-1)^{m} \zeta^2(2m+2) \frac{d}{d\lambda_m} \left( \frac{\Gamma(\lambda_m)}{(2\pi\alpha)^{\lambda_m}} \right) \notag \\
		=& \frac{\sqrt{\alpha}}{2\pi} \sum_{m=0}^{\infty} \frac{(-1)^{m} \Gamma(2m+2) \zeta^2(2m+2)}{(2\pi\alpha)^{2m+2}} \left( \psi(2m+2) - \log(2\pi\alpha) \right). \label{I2second}
	\end{align}
	From \eqref{H2 decomposition}, \eqref{I2first} and \eqref{I2second},
	\begin{align}
		H_2(\alpha) &= \frac{1}{48 \pi \sqrt{\alpha}} \left( 6\gamma^2 - \pi^2 -6 \log^2(2) - 6\log(\pi)\log(4\pi) - 6\log(\alpha)\log(4\pi^2\alpha) \right) \notag \\
		&\quad+\frac{\sqrt{\alpha}}{2\pi} \sum_{m=0}^{\infty} \frac{(-1)^{m} \Gamma(2m+2) \zeta^2(2m+2)}{(2\pi\alpha)^{2m+2}} \left( \psi(2m+2) - \log(2\pi\alpha) \right). \label{I2}
	\end{align}	
	Also, we can directly evaluate $H_3(\alpha)$ to get
	\begin{align}
	H_3(\alpha) = \frac{\pi}{32 \sqrt{\alpha}}. \label{I3}
	\end{align}
	It only remains to find the asymptotic expansion of $H_4(\alpha)$ as $\alpha\to\infty$ for which we employ Watson's lemma, that is, Theorem \ref{watlem}. From the respective definitions of $H_4(\alpha)$ and $H_{22}(\alpha)$ from \eqref{LHSasy} and  \eqref{H2 decomposition}, we observe that
	\begin{equation}\label{H4 decomposition}
		H_4(\alpha)=H_{22}(\alpha)-\frac{\sqrt{\alpha}}{2\pi} \int_{0}^{\infty} xe^{-2 \pi \alpha x} \sum_{n=1}^{\infty} \frac{d(n)\log(n)}{n^2+x^2}  \, dx.
	\end{equation}
	As $x \to 0$,
	\begin{align*}
		x\sum_{n=1}^{\infty} \frac{d(n)\log(n)}{n^2+x^2} 
		=x \sum_{n=1}^{\infty} \frac{d(n)\log(n)}{n^2} \sum_{\ell=0}^{\infty} \left( \frac{-x^2}{n^2} \right)^\ell 
		=  2\sum_{\ell=0}^{\infty} (-1)^{\ell+1} \zeta(2\ell+2) \zeta'(2\ell+2) x^{2\ell+1},
	\end{align*}
	since, for $\textup{Re}(w)>1$, we have $\sum_{n=1}^{\infty}d(n)\log(n)n^{-w}=-2\zeta(w)\zeta'(w)$.
	Now use Watson's Lemma, that is, Theorem \ref{watlem} with $\mu=1/2$, $\lambda=1$, $y=2\pi\alpha$ and $a_s=2(-1)^{s+1} \zeta(2s+2) \zeta'(2s+2) $ to get, as $\alpha \to \infty$,
	\begin{align}
		\frac{\sqrt{\alpha}}{2\pi} \int_{0}^{\infty} xe^{-2 \pi \alpha x}  \sum_{n=1}^{\infty} \frac{d(n)\log(n)}{n^2+x^2}\, dx\sim \frac{\sqrt{\alpha}}{2\pi} \sum_{m=0}^{\infty} \frac{2 (-1)^{m+1}\Gamma(2m+2)\zeta(2m+2)\zeta'(2m+2)}{(2\pi\alpha)^{2m+2}}. \label{I4}
	\end{align}
	From \eqref{LHSasy}, \eqref{I1}, \eqref{I2second}, \eqref{I2}, \eqref{I3}, \eqref{H4 decomposition} and \eqref{I4}, we have, upon simplification,
	\begin{align}
		&\lim_{z \to 0}  \frac{d}{dz} \alpha ^{(z+1)/2}   \int_{0}^{\infty} e^{-2 \pi \alpha x} x^{z/2} \left( \Omega(x,z) - \frac{1}{2\pi} \zeta(z)x^{z/2-1} \right) \, dx \notag \\
		=& \frac{1}{8\pi\sqrt{\alpha}} \left( \gamma + \log(2\pi) \right) \left( \gamma- \log(2\pi\alpha) \right) +\frac{\pi}{96\sqrt{\alpha}} \notag \\ 
		+&\frac{\sqrt{\alpha}}{\pi} \sum_{m=0}^{\infty} \frac{(-1)^{m} \Gamma(2m+2) \zeta^2(2m+2)}{(2\pi\alpha)^{2m+2}} \left( \frac{\log(\alpha)}{2} + \psi(2m+2) - \log(2\pi\alpha)+ \frac{\zeta'(2m+2)}{\zeta(2m+2)} \right). \label{OmegaAsy}
	\end{align}
	Now from the definition of $J(z, \alpha)$ from \eqref{j z alpha} and \cite[Theorem 1.10]{drzacta}, we see that as $\alpha\to\infty$,
	\begin{align}
		J(1, \alpha) \sim \frac{-\gamma+\log(2\pi\alpha)}{2\sqrt{\alpha}}+ 2\sqrt{\alpha} \sum_{m=0}^{\infty} \frac{(-1)^m}{(2\pi\alpha)^{2m+2}} \Gamma(2m+2) \zeta^2(2m+2). \label{RHSknownasy}
	\end{align}
From \eqref{omegaXidiff}, \eqref{OmegaAsy} and \eqref{RHSknownasy},
	\begin{align*}
		\frac{1}{2\pi} \lim_{z\to 0}\frac{d}{dz}J(z+1, \alpha)
		&\sim -\frac{1}{8\pi\sqrt{\alpha}} \left( \gamma + \log(2\pi) \right) \left( \gamma- \log(2\pi\alpha) \right) +\frac{\pi}{96\sqrt{\alpha}} \\
		&\quad+ \frac{\sqrt{\alpha}}{\pi} \sum_{m=0}^{\infty} \frac{(-1)^{m} \Gamma(2m+2) \zeta^2(2m+2)}{(2\pi\alpha)^{2m+2}} \left(- \frac{1}{2}\log(\alpha) +\gamma+ \psi(2m+2) + \frac{\zeta'(2m+2)}{\zeta(2m+2)} \right).
	\end{align*}
	Finally, use \eqref{used in asymptote} in the above equation to complete the proof of \eqref{alpha infinity}.
\end{proof}
	
	 \begin{proof}[Corollary \textup{\ref{phi1 asymptotic}}][]
	 	We first prove \eqref{infinity alpha}. From \eqref{ef1} and \eqref{k=1},
	 	\begin{align}\label{neww}
	 	\sqrt{\alpha} \left\{ \sum_{n=1}^{\infty} \phi_1(n\alpha) + \frac{\log^2(2\pi)-(\gamma-\log(x))^2}{4\alpha} + \frac{\pi^2}{48\alpha} \right\} =\frac{\sqrt{\alpha} \log(\alpha)}{2} \left\{ \sum_{n=1}^{\infty} \phi(n\alpha) +\frac{\gamma - \log (2\pi\alpha)}{2\alpha} \right\}+\mathscr{I}(\alpha).
	 	\end{align}
	 Using 
	 	\eqref{w1.26}, \eqref{ramanujan asymptotic} and Theorem \ref{asymptotic_xit}, we arrive at \eqref{infinity alpha} upon simplification. 
	 	
	 	We next prove \eqref{alpha zero}. Recall from the sentence following \eqref{alpha infinity} that as $\alpha\to 0$,
	 		\begin{align}\label{zero alpha}
	 		\mathscr{I}(\alpha)\sim& -\frac{\sqrt{\alpha}}{4} \left( \gamma + \log(2\pi) \right) \left( \gamma- \log\left(\tfrac{2\pi}{\alpha}\right) \right) +\frac{\pi^2\sqrt{\alpha}}{48}\notag \\
	 		&+ \frac{2}{\sqrt{\alpha}} \sum_{m=0}^{\infty} \frac{(-1)^{m} \Gamma(2m+2) \zeta^2(2m+2)}{(2\pi)^{2m+2}} \left( \frac{1}{2}\log(\alpha) +\gamma+ \psi(2m+2) + \frac{\zeta'(2m+2)}{\zeta(2m+2)} \right)\alpha^{2m+2}.
	 	\end{align}
	 	Similarly, from \eqref{ramanujan asymptotic}, as $\alpha\to0$, 
	 	\begin{align}\label{ramanujan asymptotic1}
	 		&\dfrac{1}{\pi^{3/2}}\int_0^{\i}\left|\Xi\left(\dfrac{1}{2}t\right)\Gamma\left(\dfrac{-1+it}{4}\right)\right|^2
	 		\dfrac{\cos\left(\tf{1}{2}t\log\alpha\right)}{1+t^2}\, dt\nonumber\\
	 		&\sim-\frac{\sqrt{\alpha}}{2}\left(\gamma-\log\left(\frac{2\pi}{\alpha}\right)\right)-\frac{2}{\sqrt{\alpha}}\sum_{m=1}^{\infty}\frac{(-1)^m}{(2\pi)^{2m}}\Gamma(2m)\zeta^2(2m)\alpha^{2m}.
	 	\end{align}
	 Equation \eqref{alpha zero} now follows from \eqref{neww}, \eqref{w1.26}, \eqref{ramanujan asymptotic1} and \eqref{zero alpha}  upon simplification.
	 	\end{proof}
	 	
	\section{Proofs of Carlitz-type transformations}

\begin{proof}[Theorem \textup{\ref{finitetrans2}}][]
	Differentiate \eqref{carlitz relation}  $k$ times with respect to $z$ using the Leibnitz rule to get
	\begin{align*}
		\sum_{\ell=0}^{k} \binom{k}{\ell} n^z \log^{k-\ell}(n) \sum_{j=1}^{m} \zeta^{(\ell)} \left(z, nx + \frac{n(j-1)}{m}\right) = \sum_{\ell=0}^{k} \binom{k}{\ell} m^z \log^{k-\ell}(m) \sum_{j=1}^{n} \zeta^{(\ell)} \left(z, mx + \frac{m(j-1)}{n}\right).
	\end{align*}
	Add and subtract the principal part of the Laurent series expansion of the $\ell$-th derivative of the Hurwitz zeta function from the summands of the inner sums on both sides  and rearrange the terms to get 
	\begin{align}
		& \sum_{\ell=0}^{k} \binom{k}{\ell} n^z \log^{k-\ell}(n) \sum_{j=1}^{m} \frac{(-1)^\ell \ell!}{(z-1)^{\ell+1}} - \sum_{\ell=0}^{k} \binom{k}{\ell} m^z \log^{k-\ell}(m)  \sum_{j=1}^{n}\frac{(-1)^\ell \ell!}{(z-1)^{\ell+1}} \notag  \\ 
		=& \sum_{\ell=0}^{k} \binom{k}{\ell} m^z \log^{k-\ell}(m)   \sum_{j=1}^{n} \left( \zeta^{(\ell)} \left(z, mx + \frac{m(j-1)}{n}\right) - \frac{(-1)^\ell  \ell!}{(z-1)^{\ell+1}} \right) \notag \\
		& -\sum_{\ell=0}^{k} \binom{k}{\ell} n^z \log^{k-\ell}(n)   \sum_{j=1}^{m} \left( \zeta^{(\ell)} \left(z, nx + \frac{n(j-1)}{m}\right) -  \frac{(-1)^\ell \ell!}{(z-1)^{\ell+1}} \right). \label{3.5.1}
	\end{align}
	The left-hand side of \eqref{3.5.1} can be simplified to
	\begin{align}
		&\sum_{\ell=0}^{k} \binom{k}{\ell} n^z \log^{k-\ell}(n) \sum_{j=1}^{m} \frac{(-1)^\ell \ell!}{(z-1)^{\ell+1}} - \sum_{\ell=0}^{k} \binom{k}{\ell} m^z \log^{k-\ell}(m)  \sum_{j=1}^{n}\frac{(-1)^\ell \ell!}{(z-1)^{\ell+1}} \notag  \\ 
		&= \frac{1}{(z-1)^{k+1}} \sum_{\ell=0}^{k}  (-1)^\ell \ell! \binom{k}{\ell} (z-1)^{k-\ell} \left\{ mn^z \log^{k-\ell}(n) - nm^z \log^{k-\ell}(m)  \right\}\notag\\
		&= \frac{R(z)}{(z-1)^{k+1}},\label{3.5.2}
	\end{align} 
	where $R(z)$ is defined by 
	\begin{align*}
		R(z):= \sum_{\ell=0}^{k}  (-1)^{k-\ell} (k-\ell)! \binom{k}{\ell} (z-1)^{\ell} \left( mn^z \log^{\ell}(n) - nm^z \log^{\ell}(m)  \right).
	\end{align*}
	We now show that for $0 \leq i \leq k, \lim_{z \to 1} \frac{d^i}{dz^i}	R(z) = 0.$
To that end, differentiate $R(z)$ $i$-times with respect to $z$ to get
	\begin{align*}
		 \frac{d^i}{dz^i}	R(z)
		= \sum_{\ell=0}^{k} (-1)^{k-\ell} (k-\ell)! \binom{k}{\ell} \sum_{r=0}^{i} \binom{i}{r} \frac{d^r}{dz^r} \left[ (z-1)^{\ell}  \right] \frac{d^{i-r}}{dz^{i-r}} \left[  mn^z \log^{\ell}(n) - nm^z \log^{\ell}(m)  \right].
	\end{align*}
	We wish to let $z \to 1$ on both sides.
	Note that for $r \neq \ell $, $ \lim_{z \to 1} \frac{d^r}{dz^r} (z-1)^{\ell} = 0$. So, only the term $r=\ell$ survives in the inside sum for each $0 \leq \ell \leq k$, which is possible only if $i \geq \ell$. Also for $\ell>i$, $\binom{i}{\ell}=0$, so the sum reduces to
	\begin{align*}
		\lim_{z \to 1} \frac{d^i}{dz^i}	R(z)=& \sum_{\ell=0}^{i} (-1)^{k-\ell} (k-\ell)! \binom{k}{\ell}  \binom{i}{\ell} \lim_{z \to 1} \frac{d^\ell}{dz^\ell} \left[ (z-1)^{\ell}  \right] \lim_{z \to 1} \frac{d^{i-\ell}}{dz^{i-\ell}} \left[  mn^z \log^{\ell}(n) - nm^z \log^{\ell}(m)  \right] \\
		=& \sum_{\ell=0}^{i} (-1)^{k-\ell} (k-\ell)! \binom{k}{\ell}  \binom{i}{\ell} \ell! \left[  mn \log^{i}(n) - nm \log^{i}(m)  \right] \\
		 =&0.
	\end{align*}
	Thus, we can apply L'Hospital's rule $k+1$ times to \eqref{3.5.2} to have
	\begin{align}
		\lim_{z \to 1}  \frac{R(z)}{(z-1)^{k+1}} 
		=& \frac{1}{(k+1)!} \sum_{\ell=0}^{k} (-1)^{k-\ell} (k-\ell)! \binom{k}{\ell} \binom{k+1}{\ell} \ell! \left[  mn \log^{k+1}(n) - nm \log^{k+1}(m)  \right]\notag  \\
		=& \frac{(-1)^k}{k+1} \left[  mn \log^{k+1}(n) - nm \log^{k+1}(m)  \right] \left\{ \sum_{\ell=0}^{k+1} (-1)^{\ell}  \binom{k+1}{\ell} - (-1)^{k+1} \right\} \notag \\
		=& \frac{1}{k+1} \left[  mn \log^{k+1}(n) - nm \log^{k+1}(m)  \right]. \label{3.5.3}
	\end{align}
	Take $\lim_{z \to 1}$ of both sides of \eqref{3.5.1}, and use \eqref{hurwitz_psi_j} and \eqref{3.5.3} to get
	\begin{align*}
		\frac{1}{k+1} \left[  mn \log^{k+1}(n) - nm \log^{k+1}(m)  \right]
		=&\sum_{\ell=0}^{k} \binom{k}{\ell} m \log^{k-\ell}(m)  \sum_{j=1}^{n} (-1)^{\ell+1} \psi_\ell\left(mx + \frac{m(j-1)}{n}\right) \\
		-&\sum_{\ell=0}^{k} \binom{k}{\ell} n \log^{k-\ell}(n)  \sum_{j=1}^{m} (-1)^{\ell+1} \psi_\ell\left(nx + \frac{n(j-1)}{m}\right).
	\end{align*}
	Re-arrange the terms of the above identity to arrive at \eqref{finitetrans2_eqn}, which completes the proof.
%
\end{proof}

\begin{proof}[Corollary \textup{\ref{carlitzcorollary}}][]
	Put $k=0$ in Theorem \ref{finitetrans2}.
\end{proof}

	\section{Proof of the inversion formula}\label{inv}
The inversion formula, which forms the basis of much of the developments in the sequel, is now proved.
\begin{proof}[Theorem \textup{\ref{Inv}}][]
	The proof invokes the principle of strong mathematical induction.
	
	Clearly $h(0)=1$, because the empty partition is the only partition of $0$.  Hence the Theorem holds true for $k=0$.
	Let  us assume that the statement holds true for all  $m, 0\leq m\leq k$, that is,
	\begin{align*}
		f(m)= \sum_{r=0}^{m}  \frac{(-1)^{r}}{s^{r+1}(1)} h(r) g(m-r),
	\end{align*}
	where $h(r)$ is defined in \eqref{h(r)}. We prove that
	\begin{align*}
		f(k+1)= \sum_{r=0}^{k+1}  \frac{(-1)^{r}}{s^{r+1}(1)} h(r) g(k+1-r).
	\end{align*}
	From the definition of $g(n)$,
	\begin{align*}
		g(k+1) &= \sum_{r=0}^{k+1} s(k-r+2) f(r),
		\end{align*}
which, with the help of the induction hypothesis, implies
\begin{align}
		-s(1)f(k+1) &= -g(k+1) + \sum_{r=0}^{k} s(k-r+2) \sum_{i=0}^{r}  \frac{(-1)^{i}}{s^{i+1}(1)} h(i) g(r-i)\notag\\
		 &= -g(k+1) + \sum_{i=0}^{k} \sum_{r=i}^{k} (-1)^{i} s(k-r+2) \frac{g(r-i)}{s^{i+1}(1)}h(i)\notag\\
	 &= -g(k+1) + \sum_{i=0}^{k} \sum_{y=0}^{k-i} (-1)^{i} s(k-y-i+2) \frac{g(y)}{s^{i+1}(1)}h(i)\notag\\
	&= -g(k+1) + \sum_{y=0}^{k} g(y) \sum_{i=0}^{k-y} (-1)^{i} \frac{s(k-y-i+2)}{s^{i+1}(1)}h(i),\label{Inv2}
	\end{align}
	where in the penultimate step, we let $r=y+i$. For $\ell\geq0$, we now show
	\begin{align} \label{Inv3}
		\sum_{i=0}^{\ell} (-1)^{i} \frac{s(\ell-i+2)}{s^{i+1}(1)}h(i) = \frac{(-1)^{\ell}}{s^{\ell+1}(1)}h(\ell+1).
	\end{align}
	Let $\mathscr{P}(2i)$ denote the set of all integer partitions of $2i$ in exactly $i$ parts, that is, the number of non-negative integer solutions of $1 b_1+...+ (r+1)b_{i+1}= 2i, b_1+...+b_{i+1}=i$. If $\pi$ denotes any such partition, then
	\begin{align}\label{justification}
		\sum_{i=0}^{\ell} (-1)^{i} \frac{s(\ell-i+2)}{s^{i+1}(1)}h(i)
		&= \sum_{i=0}^{\ell} (-1)^{i} \frac{s(\ell-i+2)}{s^{i+1}(1)} \sum_{\pi\in\mathscr{P}(2i)} (-1)^{b_1} \prod_{j=1}^{i+1} s^{b_j}(j) \frac{(i-{b_1})! }{b_2!...b_{i+1}!} \notag\\ 
		&= \frac{(-1)^\ell}{s^{\ell+1}(1)} \sum_{i=0}^{\ell} s^{\ell-i}(1) s(\ell-i+2) \sum_{\pi\in\mathscr{P}(2i)} (-1)^{b_1+\ell-i} \prod_{j=1}^{i+1} s^{b_j}(j) \frac{(i-{b_1})! }{b_2!...b_{i+1}!}\notag\\
		&= \frac{(-1)^\ell}{s^{\ell+1}(1)} \sum_{\pi'\in\mathscr{P}(2\ell+2)} (-1)^{c_1} \prod_{j=1}^{\ell+2} s^{c_j}(j) \frac{(\ell+1-{c_1})! }{c_2!...c_{l+2}!}\notag\\
		&=\frac{(-1)^\ell}{s^{\ell+1}(1)} h(\ell+1),
	\end{align}
	where the sum in the penultimate expression runs over all non-negative integer solutions of $1 c_1+...+ (\ell+2)c_{\ell+2}= 2\ell+2$ with $c_1+...+c_{\ell+2}=\ell+1$.
	
	The second-last step above can be justified as follows. For a non-negative integer $n$ and $\lambda\in\mathscr{P}(2n)$, define $\textup{freq}_{2n}(\lambda):=	\frac{(n-{b_1})! }{b_2!...b_{n+1}!}$. 
	Consider $\mathscr{P}'(2n)$ to be the multiset corresponding to the partitions $\pi\in\mathscr{P}(2n)$, where each $\pi$ occurs $\textup{freq}_{2n}(\pi)$ many times so that
\begin{align*}
&\sum_{i=0}^{\ell} s^{\ell-i}(1) s(\ell-i+2) \sum_{\pi\in\mathscr{P}(2i)} (-1)^{b_1+\ell-i} \prod_{j=1}^{i+1} s^{b_j}(j) \frac{(i-{b_1})! }{b_2!...b_{i+1}!} \notag\\
&=\sum_{i=0}^{\ell} \sum_{\pi\in\mathscr{P'}(2i)} (-1)^{b_1+\ell-i} s^{\ell-i}(1) s(\ell-i+2)\prod_{j=1}^{i+1} s^{b_j}(j).
\end{align*}
For $0\leq i\leq\ell$, let $\pi \in \mathscr{P}(2i)$. We add $\ell-i$ many $1$'s and the part $(\ell-i+2)$ to $\pi$ to get $\pi'$ which is now a partition of $2\ell+2$ in exactly $\ell+1$ parts. Moreover, for a fixed $\pi'\in\mathscr{P}(2\ell+2)$, that is, $\pi': 1 c_1+...+ (\ell+2)c_{\ell+2}= 2\ell+2$, where $c_1+...+c_{\ell+2}=\ell+1$, if $c_{\ell-i+2}>0$, then $\pi$ given by
$\pi: 1 (c_1-(\ell-i))+...+(\ell-i+2)(c_{\ell-i+2}-1)+\cdots+ (\ell+2)c_{\ell+2}= 2i$ belongs to $\mathscr{P}(2i)$. 

For the same fixed $\pi'$, we now need to evaluate exact count of $\pi$'s in $\mathscr{P}(2i)$, where $i$ varies from $0$ to $\ell$,  which can generate $\pi'$ on addition of $(\ell-i)$ many $1$'s and the part $(\ell-i+2)$. We show that this count is equal to $\textup{freq}_{2\ell+2}(\pi')$.
	
To that end, we can see that for each $i$ such that $0\leq i\leq\ell$, $\pi\in\mathscr{P}(2i)$ contributes 
\begin{equation*}
\textup{freq}_{2i}(\pi)=\frac{(i-(c_1-(\ell-i)))!}{c_2!\cdots(c_{\ell-i+2}-1)!\cdots c_{\ell+2}!}=\frac{(\ell-c_1)!}{c_2!\cdots c_{\ell+2}!}c_{\ell-i+2}
\end{equation*} 
towards the count irrespective of whether $c_{\ell-i+2}$ is non-zero or not. The second equality above stems from the fact that $c_j=0$ for any $j$ between $i+2$ and $\ell+2$. Since $c_1+...+c_{\ell+2}=\ell+1$, the total count of $\pi$'s as $i$ varies from $0$ to $\ell$ is
\begin{align*}
\sum_{i=0}^{\ell}\frac{(\ell-c_1)!}{c_2!\cdots c_{\ell+2}!}c_{\ell-i+2}=\frac{(\ell+1-c_1)!}{c_2!\cdots c_{\ell+2}!}=\textup{freq}_{2\ell+2}(\pi'),
\end{align*}
by definition. In other words, the cardinality of $\cup_{i=1}^{\ell}\mathscr{P'}(2i)$ is equal to that of $\mathscr{P'}(2\ell+2)$. Therefore, 
\begin{align*}
\sum_{i=0}^{\ell} \sum_{\pi\in\mathscr{P'}(2i)} (-1)^{b_1+\ell-i} s^{\ell-i}(1) s(\ell-i+2)\prod_{j=1}^{i+1} s^{b_j}(j)
&=\sum_{\pi'\in\mathscr{P'}(2\ell+2)} (-1)^{c_1} \prod_{j=1}^{\ell+2} s^{c_j}(j)\notag\\ &=\sum_{\pi'\in\mathscr{P}(2\ell+2)} (-1)^{c_1} \prod_{j=1}^{\ell+2} s^{c_j}(j) \frac{(\ell+1-{c_1})! }{c_2!...c_{l+2}!},
\end{align*}
which justifies the third equality in \eqref{justification}.

	Now substitute $\ell=k-y$ in \eqref{Inv3}, substitute the resultant in \eqref{Inv2} and simplify to obtain
	\begin{align*}
		f(k+1) &= \sum_{y=0}^{k+1} \frac{(-1)^{k+1-y}}{s^{k+2-y}(1)}  h(k+1-y) g(y)\\
	 &= \sum_{y=0}^{k+1} \frac{(-1)^{y}}{s^{y+1}(1)} h(y) g(k+1-y).
	\end{align*}
	By invoking the principle of mathematical induction, we see that the proof is now complete.
\end{proof}

	\section{Combinatorics meets Special Functions: Proof of Theorem \ref{finitetrans1}}
	
Transformations of Guinand-type, that is, \eqref{guigen}, containing second or higher-order derivatives of $\psi_k(x), k\geq0,$ crucially involve the combinatorial object $h(r)$ dealt with in Section \ref{inv}. This can be seen in the proof of Theorem \ref{finitetrans1} which we now give. Let
	\begin{align*}
		F_{m,n}(z) := \left( \frac{m}{n}	\right) ^{\frac{-z}{2}}	\sum_{j=1}^{m} \zeta \left(z, nx + \frac{n(j-1)}{m}\right).
	\end{align*}
	Then from \eqref{carlitz relation},
	\begin{align}
	F_{m, n}(z)=F_{n,m}(z). \label{mnnm}
	\end{align}
Differentiate  $F_{m, n}(z)$ $k$ times with respect to $z$ to obtain
	\begin{align}
		\frac{d^k}{dz^k} (F_{m,n}(z)) = \sum_{\ell=0}^{k} \binom{k}{\ell} \left( \frac{m}{n}	\right) ^{\frac{-z}{2}} \left(\frac{-1}{2}\right)^{k-\ell} \log^{k-\ell}\left(\frac{m}{n}\right) \sum_{j=1}^{m} \zeta^{(\ell)}\left(z,nx + \frac{n(j-1)}{m}\right). \label{Inv5}
	\end{align}
	From \cite[Corollary 2]{coffey} and \eqref{psigamma}, we have for $z\in\mathbb{N}, z>1$,
	\begin{align}\label{coffey special}
		 -\frac{\psi_\ell^{(z-1)} (y)}{\ell!} = \sum_{r=0}^{\ell} s(z, \ell-r+1) \frac{(-1)^{r}}{r!} \zeta^{(r)}(z,y) .
	\end{align}
	We use Theorem \ref{Inv} with $g(\ell) = -\frac{\psi_\ell^{(z-1)} (y)}{\ell!}$, $ f(r) = \frac{(-1)^{r}}{r!} \zeta^{(r)}(z,y)$, and $s(i)=s(z,i)$, the Stirling number of the first kind, to get
	\begin{align}
		\zeta^{(\ell)}(z,y) &= \sum_{r=0}^{\ell} \frac{(-1)^{\ell+r+1}}{s^{r+1}(z,1)} \frac{\ell!}{(\ell-r)!} h(r) \psi_{\ell-r}^{(z-1)} (y).  \label{Inv6} 
	\end{align}
	where $h(r)$ is defined in \eqref{h(r)}. Now substitute \eqref{Inv6} in \eqref{Inv5} to have
	\begin{align} 
		\frac{d^k}{dz^k} (F_{m,n}(z)) = \sum_{\ell=0}^{k} \binom{k}{\ell} \left( \frac{m}{n}	\right) ^{\frac{-z}{2}} \log^{k-\ell}\left(\sqrt{\frac{n}{m}}\right) \sum_{j=1}^{m} \sum_{r=0}^{\ell}  \frac{(-1)^{\ell+r+1}}{s^{r+1}(z,1)} \frac{\ell!}{(\ell-r)!} h(r) \psi_{\ell-r}^{(z-1)} \left(nx + \frac{n(j-1)}{m}\right). \label{Inv7}
	\end{align}
	From \eqref{mnnm} and \eqref{Inv7}, we arrive at \eqref{meeting equation} upon simplification. This completes the proof of Theorem \ref{finitetrans1}.
	
	\qed

\begin{proof}[Corollary \textup{\ref{finitetrans1 cor1}}][]
	Take $z=2, k=1, m=1$ and $n=2$ in Theorem \ref{finitetrans1} and use the facts that $s(2, 2)=1$ and $h(0)=h(1)=1$.
\end{proof}	

\begin{proof}[Corollary \textup{\ref{finitetrans1 cor2}}][]
		Take $z=2, k=2, m=2$ and $n=1$ in Theorem \ref{finitetrans1}, use the facts $s(2,1)=-1$, $s(2,2)=1$, $s(2,3)=0$, and $h(0)=h(1)=h(2)=1$. 
\end{proof}

	\section{Proofs of Guinand-type transformations}
	We begin this section with a couple of lemmas, which are also of independent interest. The second one uses the first, and justifies why the infinite series of 	$\psi_{\ell}^{(z-1)}$ in our theorems are convergent. 
	\begin{lemma} \label{lemma minus two}
		Let $r\in\mathbb{N}\cup\{0\}$, $x>0$ and \textup{Re}$(z)>-1, z\neq 1$. As $x\to\infty$,
		\begin{align}\label{minus two}
			\zeta^{(r)}(z, x)&=\sum_{t=0}^{r}\binom{r}{t}\frac{(-1)^rt!}{(z-1)^{t+1}}\frac{\log^{r-t}(x)}{x^{z-1}}+(-1)^{r}\frac{\log^{r}(x)}{2x^{z}}+O_{r, z}\left(\frac{\log^{r}(x)}{x^{\textup{Re}(z)+1}}\right).
				\end{align}
	\end{lemma}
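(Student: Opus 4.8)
The plan is to isolate the two explicit main terms of the expansion by means of an exact Euler--Maclaurin representation of $\zeta(z,x)$ valid throughout $\Re(z)>-1$, and then to differentiate that representation $r$ times with respect to $z$. Applying the Euler--Maclaurin summation formula to $f(u)=(x+u)^{-z}$ and integrating by parts once against the second periodic Bernoulli function $\overline{B_2}(u):=\{u\}^2-\{u\}+\tfrac16$ yields, after analytic continuation, the identity
\begin{equation*}
\zeta(z,x)=\frac{x^{1-z}}{z-1}+\frac{x^{-z}}{2}+\frac{z}{12}x^{-z-1}-\frac{z(z+1)}{2}\int_0^\infty\frac{\overline{B_2}(u)}{(x+u)^{z+2}}\,du,
\end{equation*}
valid for $\Re(z)>-1$, $z\neq1$. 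I would write $R(z,x)$ for the sum of the last two terms, so that $\zeta(z,x)=\frac{x^{1-z}}{z-1}+\frac12 x^{-z}+R(z,x)$.

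First I would differentiate the two explicit terms. By the Leibniz rule,
\begin{equation*}
\frac{d^r}{dz^r}\frac{x^{1-z}}{z-1}=\sum_{t=0}^{r}\binom{r}{t}\Big((-\log x)^{r-t}x^{1-z}\Big)\Big((-1)^t t!\,(z-1)^{-t-1}\Big)=\sum_{t=0}^{r}\binom{r}{t}\frac{(-1)^r t!}{(z-1)^{t+1}}\frac{\log^{r-t}(x)}{x^{z-1}},
\end{equation*}
which is exactly the first sum in \eqref{minus two}, while $\frac{d^r}{dz^r}\big(\tfrac12 x^{-z}\big)=(-1)^r\log^r(x)/(2x^{z})$ reproduces the second explicit term. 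Thus the whole content of the lemma reduces to showing that $\frac{d^r}{dz^r}R(z,x)=O_{r,z}\big(\log^r(x)\,x^{-\Re(z)-1}\big)$.

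For this last, and main, step I would differentiate $R(z,x)$ under the integral sign. For $\Re(z)\geq -1+\delta$ the integral $\int_0^\infty \overline{B_2}(u)(x+u)^{-z-2}\,du$ converges absolutely and locally uniformly, and each $z$-derivative inserts only a factor $(-\log(x+u))^s$ of at most logarithmic growth, so differentiation under the integral is justified by the standard theorem on parameter integrals. After applying the Leibniz rule to the polynomial prefactor $-\tfrac12 z(z+1)$ and to $\tfrac{z}{12}x^{-z-1}$, every resulting term is controlled, using $|\overline{B_2}(u)|\leq\tfrac16$, by an integral of the shape $\int_0^\infty \log^s(x+u)\,(x+u)^{-\Re(z)-2}\,du=\int_x^\infty \log^s(v)\,v^{-\Re(z)-2}\,dv$. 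Writing $\sigma=\Re(z)$ and substituting $v=e^y$ shows this equals $\int_{\log x}^\infty y^s e^{-(\sigma+1)y}\,dy\sim \log^s(x)\,x^{-\sigma-1}/(\sigma+1)$ as $x\to\infty$, so it is $O(\log^s(x)\,x^{-\sigma-1})$ with $s\leq r$; the terms arising from $\tfrac{z}{12}x^{-z-1}$ are handled identically. Summing these contributions gives $\frac{d^r}{dz^r}R(z,x)=O_{r,z}(\log^r(x)\,x^{-\sigma-1})$, which completes the proof.

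I anticipate that the genuinely delicate point is the rigorous passage from the asymptotic expansion of $\zeta(z,x)$ to that of its $z$-derivatives: term-by-term differentiation of an asymptotic series in a parameter is not valid in general, which is precisely why I would route the argument through the exact integral remainder rather than through the bare asymptotic expansion, and why the bound on $\int_x^\infty \log^s(v)\,v^{-\sigma-2}\,dv$ is the crux of the error estimate. (An equivalent route replaces the Euler--Maclaurin representation by Hermite's integral formula for $\zeta(z,x)$; the $z$-derivatives of that integrand are estimated in the same way, using $\arctan(t/x)\le t/x$ to extract the extra factor $x^{-1}$.)
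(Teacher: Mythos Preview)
Your proof is correct and follows essentially the same route as the paper: an Euler--Maclaurin representation of $\zeta(z,x)$ with the $\overline{B_2}$ integral remainder, differentiation of the explicit terms to produce the two main terms, and the estimate $\int_0^\infty\log^s(x+u)(x+u)^{-\sigma-2}\,du=O(\log^s(x)\,x^{-\sigma-1})$ for the remainder. The only cosmetic differences are that the paper cites the already $r$-times-differentiated form directly (NIST \textbf{25.11.20}) rather than deriving it, and it packages the final tail integral as the incomplete gamma function $\Gamma(n+1,(\sigma+1)\log x)$ before invoking its asymptotic, whereas you do the same estimate via the substitution $v=e^y$.
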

	\begin{proof}
		From \cite[p.~608, formula \textbf{25.11.20}]{nist}, for $x>0$ and \textup{Re}$(z)>-1, z\neq 1$,
		\begin{align}\label{minus one}
			\zeta^{(r)}(z, x)&=\sum_{t=0}^{r}\binom{r}{t}\frac{(-1)^rt!}{(z-1)^{t+1}}\frac{\log^{r-t}(x)}{x^{z-1}}+(-1)^{r}\frac{\log^{r}(x)}{2x^{z}}+(-1)^{r+1}z(z+1)\int_{0}^{\infty}\frac{\tilde{B_2}(u)\log^r(u+x)}{(u+x)^{z+2}}\, du\notag\\
			&\quad+r(-1)^r(2z+1)\int_{0}^{\infty}\frac{\tilde{B_2}(u)\log^{r-1}(u+x)}{(u+x)^{z+2}}\, du+(-1)^{r+1}r(r-1)\int_{0}^{\infty}\frac{\tilde{B_2}(u)\log^{r-2}(u+x)}{(u+x)^{z+2}}\, du,
		\end{align}
		where $\tilde{B_2}(u)$ denotes the second periodized Bernoulli polynomial \cite[p.~5-6]{temme}, that is,
		\begin{align*}
			\tilde{B_2}(u)=B_2(u)&=u^2-u+\frac{1}{6}, 0\leq u\leq1,\notag\\
			\tilde{B_2}(u+1)&=\tilde{B_2}(u), u\in\mathbb{R}.
		\end{align*}
		
Thus, it suffices to show that for any $n\in\mathbb{N}\cup\{0\}$, as $x\to\infty$, we have
\begin{align}\label{zero}
\int_{0}^{\infty}\frac{\tilde{B_2}(u)\log^n(u+x)}{(u+x)^{z+2}}\, du=O_z\left(\frac{\log^{n}(x)}{x^{\textup{Re}(z)+1}}\right).
\end{align}
To that end, observe that since $\tilde{B_2}(u)\leq13/6$ on $(0,\infty)$, 
\begin{align}\label{one}
\left|\int_{0}^{\infty}\frac{\tilde{B_2}(u)\log^n(u+x)}{(u+x)^{z+2}}\, du\right|\leq\frac{13}{6}\int_{0}^{\infty}\frac{\log^n(u+x)}{(u+x)^{\textup{Re}(z)+2}}\, du.
\end{align}
Now let $(\textup{Re}(z)+1)\log(u+x)=y$ so that $du/(u+x)=dy/(\textup{Re}(z)+1)$. Thus,
\begin{align}\label{two}
\int_{0}^{\infty}\frac{\log^n(u+x)}{(u+x)^{\textup{Re}(z)+2}}\, du
=\frac{1}{(\textup{Re}(z)+1)^{n+1}}\Gamma(n+1, (\textup{Re}(z)+1)\log(x)),
\end{align}
where $\Gamma(a, w):=\int_{w}^{\infty}e^{-y}y^{a-1}\, dy$ is the incomplete gamma function. From \cite[p.~179, Formula \textbf{8.11.2}]{nist}, for a fixed $a$, we have $\Gamma(a, w)\sim w^{a-1}e^{-w}$ as $w\to\infty$. Employing this in \eqref{two}, using the resultant in \eqref{one}, and then simplifying, we arrive at \eqref{zero}. 

Finally, using \eqref{zero} three times, with $n=r, r-1$ and $r-2$ to handle the three integrals in \eqref{minus one}, we obtain \eqref{minus two}.		
	\end{proof}

	\begin{lemma}\label{8.2a}
		Let $x>0$, $z\in\mathbb{N}, z>1$ and let $\ell\in\mathbb{N}\cup\{0\}$. As  $x\to\infty$,
\begin{align}\label{8.7ell}
	\psi_{\ell}^{(z-1)}(x)=-\ell! \sum_{r=0}^{\ell} \frac{s(z, \ell-r+1)}{r!} \left\{\sum_{t=0}^{r}\binom{r}{t}\frac{t!}{(z-1)^{t+1}} \frac{\log^{r-t}(x)}{x^{z-1}}+\frac{\log^{r}(x)}{2x^{z}}\right\}+O_{\ell, z}\left(\frac{\log^{\ell}(x)}{x^{\textup{Re}(z)+1}}\right),
		\end{align}
		where $s(a, b)$ is the Stirling number of the first kind. In particular, as $x\to\infty$,
		\begin{align}\label{psil dash}
				\psi_{\ell}'(x)= \frac{\log^{\ell}(x)}{x} + O_{\ell}\left( \frac{\log^{\ell}(x)}{x^2} \right).
		\end{align}
	\end{lemma}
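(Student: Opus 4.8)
The plan is to obtain both expansions by feeding the asymptotic expansion of $\zeta^{(r)}(z,x)$ supplied by Lemma \ref{lemma minus two} into the exact identity \eqref{coffey special}, which writes $\psi_\ell^{(z-1)}$ as a finite linear combination of the $\zeta^{(r)}(z,\cdot)$. Rearranging \eqref{coffey special} gives
\begin{equation*}
\psi_\ell^{(z-1)}(x) = -\ell!\sum_{r=0}^{\ell}\frac{s(z,\ell-r+1)}{r!}(-1)^r\zeta^{(r)}(z,x),
\end{equation*}
so the first step is simply to multiply \eqref{minus two} by $(-1)^r$. This cancels the $(-1)^r$ in front of the two explicit terms of \eqref{minus two}, leaving $(-1)^r\zeta^{(r)}(z,x)$ equal to the bracketed block $\sum_{t=0}^{r}\binom{r}{t}\frac{t!}{(z-1)^{t+1}}\frac{\log^{r-t}(x)}{x^{z-1}}+\frac{\log^{r}(x)}{2x^{z}}$ plus an unchanged error of order $\log^r(x)/x^{\textup{Re}(z)+1}$.

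For the general assertion \eqref{8.7ell} there is then nothing to compute: substituting this block term by term reproduces verbatim the right-hand side of \eqref{8.7ell}. The only genuine point is the aggregation of the error. Its total contribution is a finite sum $-\ell!\sum_{r=0}^{\ell}\frac{s(z,\ell-r+1)}{r!}O_{r,z}\!\left(\log^r(x)/x^{\textup{Re}(z)+1}\right)$, and since $0\le r\le\ell$ forces $\log^r(x)\le\log^\ell(x)$ for $x\ge e$, with finitely many coefficients depending only on $\ell,z$, the whole sum collapses to $O_{\ell,z}\!\left(\log^\ell(x)/x^{\textup{Re}(z)+1}\right)$, exactly as stated.

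The second assertion \eqref{psil dash} is the specialization $z=2$ (so $z-1=1$ and $\textup{Re}(z)+1=3$), and here the real content is a combinatorial telescoping. I would first invoke that the signed Stirling numbers of the first kind satisfy $s(2,1)=-1$, $s(2,2)=1$, and $s(2,m)=0$ otherwise, so only the indices $r=\ell$ and $r=\ell-1$ survive in \eqref{8.7ell}. Isolating the leading $1/x$ part (the terms $\log^r(x)/(2x^2)$ and the Lemma \ref{lemma minus two} error are both $O_\ell(\log^\ell(x)/x^2)$ and get absorbed), using $\binom{r}{t}t!=r!/(r-t)!$ and substituting $u=r-t$, this coefficient becomes
\begin{equation*}
-\frac{\ell!}{x}\sum_{r\in\{\ell-1,\ell\}} s(2,\ell-r+1)\sum_{u=0}^{r}\frac{\log^u(x)}{u!}.
\end{equation*}
Inserting $s(2,1)=-1$ at $r=\ell$ and $s(2,2)=1$ at $r=\ell-1$, the two truncated sums differ by exactly their top term $\log^\ell(x)/\ell!$, so they telescope to $\log^\ell(x)/x$ (the case $\ell=0$, where $r=\ell-1$ is absent, is checked directly). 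I expect this last bookkeeping to be the main obstacle: one must correctly pin down which Stirling numbers persist at $z=2$, confirm that every piece outside the $1/x$ block is genuinely $O_\ell(\log^\ell(x)/x^2)$, and verify that the difference of the two exponential-type partial sums leaves precisely the single intended term. The derivation of \eqref{8.7ell} itself is a direct substitution once Lemma \ref{lemma minus two} is available.
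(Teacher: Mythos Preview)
Your proposal is correct and follows essentially the same route as the paper: substitute the asymptotic of Lemma \ref{lemma minus two} into \eqref{coffey special} to obtain \eqref{8.7ell}, then specialize to $z=2$, observe that only $r=\ell$ and $r=\ell-1$ survive via $s(2,1)=-1$, $s(2,2)=1$, and telescope the two truncated sums to extract $\log^\ell(x)/x$. The paper's proof is terser but identical in structure.
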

	\begin{proof}
Equation \eqref{8.7ell} results from using Lemma \ref{lemma minus two} in \eqref{coffey special} and simplifying. As for \eqref{psil dash}, we let $z=2$ in \eqref{8.7ell} and observe that only the terms corresponding to $r=\ell$ and $r=\ell-1$ survive, since $s(2, k)=0$ for $k>2$. Using the facts that $s(2, 1)=-1$ and $s(2, 2)=1$, this simplifies to 
 \begin{align*}
 		\psi_{\ell}'(x)=\frac{\ell!}{x}\left\{\sum_{t=0}^{\ell}\frac{\log^{\ell-t}(x)}{(\ell-t)!}-\sum_{t=0}^{\ell-1}\frac{\log^{\ell-1-t}(x)}{(\ell-1-t)!}\right\}+ O_{\ell}\left( \frac{\log^{\ell}(x)}{x^2} \right)=\frac{\log^{\ell}(x)}{x} + O_{\ell}\left( \frac{\log^{\ell}(x)}{x^2} \right).
 \end{align*}
	\end{proof}
	\begin{remark}
		The result in the above lemma can also be obtained by differentiating the Plana-type integral for $\psi_k(x)$ \cite[Theorem 2]{ishibashi} and then using the generalization of Watson's lemma given in Theorem \ref{watsongen}.
	\end{remark}
	\subsection{The case $z\in\mathbb{N}, z>2$}	
We are now ready to prove Theorem \ref{guinand gen psi_k z>2}.

	Replace $m$ and $n$ by $mN$ and $nN$ respectively  in Theorem \ref{finitetrans1}, let $s(i)=s(z,i)$ in $h(r)$, $x=\frac{1}{N}\left( \frac{1}{n}+ \frac{1}{m} \right)$, $\frac{m}{n}=\alpha$, $\frac{n}{m}=\beta$. Then let  $N \to \infty$ and observe from Lemma \ref{8.2a} that the resulting infinite series converge. This leads us to \eqref{z>2} for $\alpha, \beta \in \mathbb{Q}^{+}$. 
	
	Note that from \eqref{coffey special}, $\psi_{\ell-r}^{(z-1)} \left(1 + \frac{j}{\alpha}\right)$ is continuous for $\alpha>0$. Also, $\sum\limits_{j=1}^{\infty}  \psi_{\ell-r}^{(z-1)} \left(1 + \frac{j}{\alpha}\right)$ is the uniform limit of continuous functions as can be seen from Lemma \ref{8.2a}. Hence by the uniform limit theorem, $\sum\limits_{j=1}^{\infty}  \psi_{\ell-r}^{(z-1)} \left(1 + \frac{j}{\alpha}\right)$ (and similarly, $\sum\limits_{j=1}^{\infty}  \psi_{\ell-r}^{(z-1)} \left(1 + j\alpha\right)$) is continuous for $\alpha>0$. This implies that \eqref{z>2} holds for $\alpha, \beta>0$.
	
\qed
	
	\subsection{The case $z=2$} We begin with obtaining an asymptotic estimate for a summatory function which is used in the sequel.
\begin{lemma} \label{estimatelog}
	For $x,y>0$ and $j \in \mathbb{N} \cup \{ 0 \}$, as $x \to \infty$, we have 
	\begin{align*}
		\sum_{n \leq x} \frac{\log^{j}(ny)}{n} &=   \sum_{t=0}^{j} \binom{j}{t} \log^{j-t}(y) \left( \frac{\log^{t+1}(x)}{t+1}+\gamma_t  \right) + O_{y} \left( \frac{\log^j(x)}{x} \right) .
	\end{align*}
\end{lemma}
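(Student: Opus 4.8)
The plan is to peel off the dependence on $y$ with the binomial theorem, reducing everything to the single family $T_t(x):=\sum_{n\le x}\log^t(n)/n$, and then to pin down each $T_t(x)$ by Euler--Maclaurin summation, identifying the emerging constant as $\gamma_t$ via its defining limit in \eqref{scz}. First I would write $\log(ny)=\log(n)+\log(y)$ and expand by the binomial theorem, giving
\begin{equation*}
\sum_{n\le x}\frac{\log^j(ny)}{n}=\sum_{t=0}^{j}\binom{j}{t}\log^{j-t}(y)\,T_t(x),\qquad T_t(x):=\sum_{n\le x}\frac{\log^t(n)}{n}.
\end{equation*}
Since each $\log^{j-t}(y)$ is a $y$-dependent constant, matching the claimed right-hand side reduces to proving, for every fixed $t\ge 0$,
\begin{equation*}
T_t(x)=\frac{\log^{t+1}(x)}{t+1}+\gamma_t+O\!\left(\frac{\log^t(x)}{x}\right)\qquad(x\to\infty),
\end{equation*}
after which summing over $t$ and noting that the dominant error (at $t=j$) is $O(\log^j(x)/x)$, while the remaining terms carry lower powers of $\log x$, produces the stated $O_y(\log^j(x)/x)$.

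To prove the estimate for $T_t(x)$, I would set $f(u)=\log^t(u)/u$, take $N=\lfloor x\rfloor$, and apply the first-order Euler--Maclaurin formula
\begin{equation*}
T_t(N)=\int_1^N f(u)\,du+\frac{f(1)+f(N)}{2}+\int_1^N\Bigl(\{u\}-\tfrac12\Bigr)f'(u)\,du.
\end{equation*}
The main term is exact, $\int_1^N f(u)\,du=\log^{t+1}(N)/(t+1)$. Since $f'(u)=\bigl(t\log^{t-1}(u)-\log^t(u)\bigr)/u^2=O(\log^t(u)/u^2)$, the periodic-Bernoulli integral converges absolutely, so splitting $\int_1^N=\int_1^\infty-\int_N^\infty$ lets me collect all $N$-independent contributions into a single constant $C$ and write $T_t(N)=\log^{t+1}(N)/(t+1)+C+E(N)$. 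The error $E(N)$ comprises $f(N)/2=O(\log^t(N)/N)$ together with the tail $\int_N^\infty(\{u\}-\tfrac12)f'(u)\,du$, which is bounded by $\tfrac12\int_N^\infty|f'(u)|\,du=O\bigl(\int_N^\infty \log^t(u)u^{-2}\,du\bigr)=O(\log^t(N)/N)$; the last integral equals $\Gamma(t+1,\log N)$ after the substitution $u=e^v$, which is $O(\log^t(N)/N)$ by the same incomplete-gamma asymptotic used in the proof of Lemma \ref{lemma minus two}.

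Comparing with \eqref{scz} at $x=1$ (after shifting the summation index), which reads $\gamma_t=\lim_{N\to\infty}\bigl(T_t(N)-\log^{t+1}(N)/(t+1)\bigr)$, forces $C=\gamma_t$. It then remains to pass from $N=\lfloor x\rfloor$ back to the real variable $x$: by the mean value theorem applied to the antiderivative $\log^{t+1}(u)/(t+1)$, whose derivative is exactly $f(u)$, one has $\log^{t+1}(\lfloor x\rfloor)/(t+1)-\log^{t+1}(x)/(t+1)=O(\log^t(x)/x)$ since $0\le x-\lfloor x\rfloor<1$. Combining these gives the desired estimate for $T_t(x)$, and hence, via the binomial decomposition of the first paragraph, the lemma.

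The main obstacle is securing the sharp $O(\log^t(x)/x)$ rate rather than a mere $o(1)$ bound: this hinges on the absolute convergence and tail control of the Euler--Maclaurin integral, the elementary asymptotic $\int_N^\infty \log^t(u)u^{-2}\,du=O(\log^t(N)/N)$, and a careful verification that replacing $\lfloor x\rfloor$ by $x$ costs only $O(\log^t(x)/x)$. Everything else is bookkeeping, and the $y$-dependence enters solely through the constants $\log^{j-t}(y)$, which is why the implied constant in the final error term is allowed to depend on $y$.
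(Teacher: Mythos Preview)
Your proof is correct. The binomial reduction to $T_t(x)$ is identical to the paper's, but thereafter the routes diverge: the paper recognises from the series representation \eqref{psikx} that $T_t(\lfloor x\rfloor)=\psi_t(\lfloor x\rfloor+1)+\gamma_t$ (a telescoping identity), and then simply invokes the ready-made asymptotic \eqref{asymptotic coffey} for $\psi_t$ to read off both the main term $\log^{t+1}(x)/(t+1)$ and the $O(\log^t(x)/x)$ error in one stroke. Your argument instead derives the asymptotic for $T_t$ from scratch via first-order Euler--Maclaurin, controlling the tail integral with the incomplete-gamma estimate and identifying the constant through the limit definition \eqref{scz}. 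The paper's route is shorter because the $\psi_t$ asymptotic is already on the shelf in this paper; yours is more self-contained and does not rely on the generalized digamma machinery, which has the mild advantage that it would stand on its own outside the present context.
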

\begin{proof}
	For $x>0$ and $m \in \mathbb{N} \cup \{ 0 \}$,
	\begin{align}
		\sum_{n \leq x} \frac{\log^{m}(n)}{n} 
		&= \sum_{n=1}^{\infty} \left( \frac{\log^{m}(n)}{n} - \frac{\log^{m}(n+ \lfloor x \rfloor)}{n+ \lfloor x \rfloor} \right)  \notag\\
		&= \psi_m(\lfloor x \rfloor + 1) +\gamma_m  \notag \\
		&= \frac{\log^{m+1}(x)}{m+1}+\gamma_m + O \left( \frac{\log^m(x)}{x} \right),\label{AsyLog}
	\end{align}
	where in the last step, we have used \eqref{asymptotic coffey}.  We now consider a slightly different finite sum, for $y>0$:
	\begin{align}
		\sum_{n \leq x} \frac{\log^{j}(ny)}{n} &= \sum_{n\leq x} \frac{\left( \log(n)+\log(y) \right)^j}{n} 
		=  \sum_{t=0}^{j} \binom{j}{t} \log^{j-t}(y) \sum_{n\leq x} \frac{\log^t(n)}{n}. \label{FinAsy} 	
	\end{align}
	Use \eqref{AsyLog} in \eqref{FinAsy} to get
	\begin{align*}
		\sum_{n \leq x} \frac{\log^{j}(ny)}{n} &=   \sum_{t=0}^{j} \binom{j}{t} \log^{j-t}(y) \left( \frac{\log^{t+1}(x)}{t+1}+\gamma_t + O \left( \frac{\log^t(x)}{x} \right) \right),
	\end{align*}
	which completes the proof.
\end{proof}

\begin{proof}[Theorem \textup{\ref{curious}}][]
	Replace $m$ and $n$ by $mN$ and $nN$ respectively  in Theorem \ref{finitetrans1}, then let $z=2$, $s(i)=s(2,i)$ in $h(r)$ to have $h(r)=1$ for all $r \in \mathbb{N} \cup \{ 0\}$, and let $x=\frac{1}{N}\left( \frac{1}{n}+ \frac{1}{m} \right)$ to obtain 
	\begin{align}\label{1curious}
		& \frac{n}{m}\sum_{\ell=0}^{k} \sum_{r=0}^{\ell} \frac{2^\ell}{(k-\ell)!(\ell-r)!} \log^{k-\ell} \left( \frac{m}{n} \right) \sum_{j=1}^{mN} \psi_{\ell-r}' \left(1 + \frac{nj}{m}\right) \notag\\
		=& \frac{m}{n}\sum_{\ell=0}^{k} \sum_{r=0}^{\ell} \frac{2^\ell}{(k-\ell)!(\ell-r)!}\log^{k-\ell} \left( \frac{n}{m} \right) \sum_{j=1}^{nN} \psi_{\ell-r}' \left(1 + \frac{mj}{n}\right).
	\end{align} 
	We cannot let $N\to\infty$ directly in the above result since that would render the two sums over $j$ divergent. This is now explained.
	 	
	 		Differentiate \eqref{dilcher fe} with respect to $x$, invoke \eqref{psil dash} and then simplify to obtain 
	 		\begin{align*}
	 				\psi_{k}'(1+x) = \frac{\log^{k}(x)}{x} +  O \left( \frac{\log^{k}(x)}{x^{2}} \right).
	 			\end{align*}
	 			This implies that $\sum_{j=1}^{\infty} \psi_{\ell-r}' \left(1 + \frac{nj}{m}\right)$ diverges. Hence we add and subtract $\frac{\log^{\ell-r}(\frac{nj}{m})}{\frac{nj}{m}}$ from the summands of the finite sums over $j$ on both sides of \eqref{1curious} (so that later we can let $N\to\infty$), and rearrange to obtain
	\begin{align}
		&\frac{n}{m} \sum_{\ell=0}^{k} \sum_{r=0}^{\ell} \frac{2^\ell}{(k-\ell)!(\ell-r)!} \log^{k-\ell} \left( \frac{m}{n} \right) \sum_{j=1}^{mN} \left[ \psi_{\ell-r}' \left(1 + \frac{nj}{m}\right) - \frac{\log^{\ell-r}(\frac{nj}{m})}{\frac{nj}{m}} \right] \notag \\
		&- \frac{m}{n} \sum_{\ell=0}^{k} \sum_{r=0}^{\ell} \frac{2^\ell}{(k-\ell)!(\ell-r)!} \log^{k-\ell} \left( \frac{n}{m} \right) \sum_{j=1}^{nN} \left[ \psi_{\ell-r}' \left(1 + \frac{mj}{n}\right) - \frac{\log^{\ell-r}(\frac{mj}{n})}{\frac{mj}{n}} \right]\notag\\
	&=	  \sum_{\ell=0}^{k} \sum_{r=0}^{\ell} \frac{2^\ell T(m, n, k)}{(k-\ell)!(\ell-r)!} , 	  \label{maindiff1}
		\end{align}
		where $T(m,n,k)$ is defined by 
	\begin{align*}
		T(m, n, k):=\left( \frac{m}{n} \right) \log^{k-\ell} \left( \frac{n}{m} \right) \sum_{j=1}^{nN} \frac{\log^{\ell-r}(\frac{mj}{n})}{\frac{mj}{n}} - \left( \frac{n}{m} \right) \log^{k-\ell} \left( \frac{m}{n} \right) \sum_{j=1}^{mN} \frac{\log^{\ell-r}(\frac{nj}{m})}{\frac{nj}{m}}.
	\end{align*}
	Use Lemma \ref{estimatelog} with $x=nN$, $y=\frac{m}{n}$ and $j=\ell-r$ in the first sum and $x=mN$, $y=\frac{n}{m}$ and $j=\ell-r$ in the second sum, and simplify to obtain
	\begin{align}
		T(m, n, k)&=
		(-1)^{k-\ell} \sum_{t=0}^{\ell-r} \binom{\ell-r}{t} \left[ \log^{k-r-t} \left( \frac{m}{n} \right) \left\{ \frac{\log^{t+1}(nN)}{t+1}+\gamma_t  \right\} \right.\nonumber\\
	&	\left.\quad- \log^{k-r-t} \left( \frac{n}{m} \right) \left\{ \frac{\log^{t+1}(mN)}{t+1}+\gamma_t  \right\} \right] + O \left( \frac{\log^{\ell-r}(N)}{N} \right). \label{diffsimp1}
	\end{align}
Therefore, using \eqref{diffsimp1}, we have
	\begin{align}
		\sum_{\ell=0}^{k} \sum_{r=0}^{\ell} \frac{2^\ell T(m, n, k)}{(k-\ell)!(\ell-r)!}
		&= \sum_{\ell=0}^{k} \sum_{r=0}^{\ell} \frac{(-1)^{k-\ell}2^\ell }{(k-\ell)!(\ell-r)!}  \sum_{t=0}^{\ell-r} \binom{\ell-r}{t} \left[ \log^{k-r-t} \left( \frac{m}{n} \right) \left\{ \frac{\log^{t+1}(nN)}{t+1}+\gamma_t  \right\} \right. \notag \\ 
		& \quad-\left.  \log^{k-r-t} \left( \frac{n}{m} \right) \left\{ \frac{\log^{t+1}(mN)}{t+1}+\gamma_t  \right\} \right]   + O \left( \frac{\log^{k}(N)}{N} \right)\notag\\
		&=f_1(m,n,k)-f_1(n,m,k)+f_2(m,n,k)-f_2(n,m,k)+O \left( \frac{\log^{k}(N)}{N} \right),\label{T as difference of f1 and f2}
	\end{align}
	where
	\begin{align}
		&f_1(m,n,k):= \sum_{\ell=0}^{k} \sum_{r=0}^{\ell} \frac{2^\ell (-1)^{k-\ell}}{(k-\ell)!(\ell-r)!}  \sum_{t=0}^{\ell-r} \binom{\ell-r}{t}  \log^{k-r-t} \left( \frac{m}{n} \right) \left\{ \frac{\log^{t+1}(nN)}{t+1} \right\}, \label{defn of f1} \\
		&f_{2}(m,n,k):=\sum_{\ell =0}^{k} \sum_{r=0}^{\ell} \frac{2^\ell (-1)^{k-\ell}}{(k-\ell)!(\ell-r)!} \sum_{t=0}^{\ell-r} \binom{\ell-r}{t} \log^{k-r-t} \left( \frac{m}{n} \right) \gamma_t. \label{defn of f2} 
	\end{align}

	We first simplify $f_1$. Consider the inner sum over $t$, that is,
	\begin{align}\label{inner sum of f1}
		&\sum_{t=0}^{\ell-r} \binom{\ell-r}{t}  \log^{k-r-t} \left( \frac{m}{n} \right)  \frac{\log^{t+1}(nN)}{t+1} \notag\\
		=& \sum_{t=0}^{\ell-r} \binom{\ell-r}{t}  \log^{k-r-t} \left( \frac{m}{n} \right) \int_{1}^{nN} \frac{\log^t(x)}{x} \, dx \notag\\
		=& \log^{k-\ell} \left( \frac{m}{n} \right) \int_{1}^{nN} \frac{1}{x} \sum_{t=0}^{\ell-r} \binom{\ell-r}{t}  \log^{\ell-r-t} \left( \frac{m}{n} \right)  \log^{t}(x) \, dx \notag\\
		=& \log^{k-\ell} \left( \frac{m}{n} \right) \int_{1}^{nN}  \log^{\ell-r} \left( \frac{mx}{n} \right)\frac{dx}{x}\notag\\
		=& \frac{\log^{k-\ell}\left( \frac{m}{n} \right)}{\ell-r+1}  \left[ \log^{\ell-r+1} \left( mN \right)  - \log^{\ell-r+1} \left( \frac{m}{n} \right)  \right].
	\end{align}
	Substitute \eqref{inner sum of f1} in \eqref{defn of f1} to see that
		\begin{align}\label{f1 simplified}
		f_1(m,n,k)&= \sum_{\ell=0}^{k} \sum_{r=0}^{\ell} \frac{2^\ell (-1)^{k-\ell} \log^{k-\ell}\left( \frac{m}{n} \right)}{(k-\ell)!(\ell-r+1)!} \left[ \log^{\ell-r+1} \left( mN \right)  - \log^{\ell-r+1} \left( \frac{m}{n} \right)  \right]\nonumber\\
&=f_{11}(m,n,k)+	f_{12}(m,n,k),
\end{align}
where
	\begin{align}
	f_{11}(m,n,k)&:= \sum_{\ell=0}^{k} \sum_{r=0}^{\ell} \frac{2^\ell (-1)^{k-\ell} \log^{k-\ell}(\frac{m}{n})}{(k-\ell)!(\ell-r+1)!} \log^{\ell -r+1} (mN),\label{f11}\\
	f_{12}(m,n,k)&:=\sum_{\ell =0}^{k} \sum_{r=0}^{\ell} \frac{2^\ell (-1)^{k-\ell +1}\log^{k-r+1}\left(\frac{m}{n}\right)}{(k-\ell)!(\ell -r+1)!}.\label{f12}
\end{align}
We now simplify $f_{11}$ and $f_{12}$ separately. Interchange the order of summation in \eqref{f11}, then let $\ell-r=j$ to get
	\begin{align*}
		f_{11}(m,n,k)&= \sum_{j=0}^{k} \sum_{r=0}^{k-j} \frac{2^{r+j}(-1)^{k-r-j}\log^{k-r-j}(\frac{m}{n})}{(k-r-j)!(j+1)!}\log^{j+1}(mN) \\
		&= \sum_{j=0}^{k} \sum_{r=0}^{k-j} \frac{2^{r+j}\log^{k-r-j}(\frac{n}{m})}{(k-r-j)!(j+1)!} \sum_{s=0}^{j+1} \binom{j+1}{s} \log^s(N) \log^{j+1-s}(m).
	\end{align*}
	Hence, 
	\begin{align*}
		&f_{11}(m,n,k)- f_{11}(n,m,k) \notag \\ 
		&= \sum_{j=0}^{k} \sum_{r=0}^{k-j} \sum_{s=0}^{j+1} \frac{2^{r+j}\left( \log^{k-r-j}\left(\tfrac{n}{m}\right) \log^{j+1-s}(m) - \log^{k-r-j}\left( \tfrac{m}{n} \right) \log^{j+1-s}(n) \right) }{(k-r-j)!(j+1-s)!s!}  \log^s(N) 
	\end{align*}
 is a polynomial of $\log(N)$ of degree less than or equal to $k+1$. We now show that the coefficient of $\log^y(N)$ is zero, for any fixed $y$ such that $ 1\leq y \leq k+1$. To that end, note that the terms corresponding to $j \leq y-2$ do not contribute to the coefficient of $\log^y(N)$; thus the coefficient of $\log^y(N)$ is given by
	\begin{align*}
		&\sum_{j=y-1}^{k} \sum_{r=0}^{k-j} \frac{2^{r+j}}{(k-r-j)!(j+1-y)!y!} \left[ \log^{k-r-j}\left( \frac{n}{m} \right) \log^{j+1-y}(m) - \log^{k-r-j}\left( \frac{m}{n} \right) \log^{j+1-y}(n) \right]\notag\\
		&=\frac{1}{y!} \sum_{r=0}^{k-y-1} \sum_{j=y-1}^{k-r} \frac{2^{r+j}}{(k-r-j)!(j+1-y)!} \left[ \log^{k-r-j}\left( \frac{n}{m} \right) \log^{j+1-y}(m) - \log^{k-r-j}\left( \frac{m}{n} \right) \log^{j+1-y}(n) \right]\notag\\ 
		&=\frac{1}{y!} \sum_{r=0}^{k-y-1} \sum_{t=0}^{k-r-y+1} \frac{2^{r+t+y-1}}{(k-r-t-y+1)!t!} \left[ \log^{k-r-t-y+1}\left(\frac{n}{m}\right)\log^{t}(m) - \log^{k-r-t-y+1}\left(\frac{m}{n}\right)\log^{t}(n) \right] \notag \\
		&= \frac{1}{y!} \sum_{r=0}^{k-y-1} \frac{2^{r+y-1}}{(k-r-y+1)!} \left[ \left( \log \left( \frac{n}{m} \right) + 2 \log(m) \right)^{k-r-y+1} - \left( \log \left( \frac{m}{n} \right) + 2 \log(n) \right)^{k-r-y+1} \right] \notag \\
		&= 0,
	\end{align*}
	where, in the second step above, we employed the change of variable $t=j-y+1$. Hence the polynomial $f_{11}(m,n,k)- f_{11}(n,m,k)$ reduces to the constant
	\begin{align*}
		&\sum_{j=0}^{k} \sum_{r=0}^{k-j} \frac{2^{r+j}}{(k-r-j)!(j+1)!} \left[ \log^{k-r-j}\left( \frac{n}{m} \right) \log^{j+1}(m) - \log^{k-r-j}\left( \frac{m}{n} \right) \log^{j+1}(n) \right] ,
	\end{align*}
	which, upon the interchange of the order of summation, and the substitution $t=j+1$ results in
	\begin{align*}
 \sum_{r=0}^{k} \sum_{t=1}^{k-r+1}  \frac{2^{r+t-1}}{(k-r-t+1)!t!} \left[ \log^{k-r-t+1}\left( \frac{n}{m} \right) \log^{t}(m) - \log^{k-r-t+1}\left( \frac{m}{n} \right) \log^{t}(n) \right].
 \end{align*}
 Simplifying the above expression in a similar way as in the previous calculation, we see that 
 \begin{align}
 	f_{11}(m,n,k)- f_{11}(n,m,k)
		= - \sum_{r=0}^{k} \frac{2^{k-r-1}}{(r+1)!} \left[ \log^{r+1}\left( \frac{n}{m} \right) - \log^{r+1}\left( \frac{m}{n} \right) \right].  \label{f11diff}
	\end{align}
We now work on $f_{12}(m,n,k)$. Interchange the order of summation in \eqref{f12} and simplify to get
	\begin{align*}
	f_{12}(m,n,k)	
		= (-1)^{k+1} \sum_{r=0}^{k} \frac{\log^{k-r+1}\left(\frac{m}{n}\right)}{(k-r+1)!} \sum_{\ell=r}^{k} \binom{k-r+1}{k-\ell}(-2)^\ell .
	\end{align*}
	Let $j=\ell-r$ and then replace $r$ by $k-r$ to have
	\begin{align*}
	f_{12}(m,n,k)		&= (-1)^{k+1} \sum_{r=0}^{k} \frac{\log^{k-r+1}\left(\frac{m}{n}\right)}{(k-r+1)!} \sum_{j=0}^{k-r} \binom{k-r+1}{k-j-r}(-2)^{j+r}\notag\\
	&= (-1)^{k+1} \sum_{r=0}^{k} \frac{\log^{r+1}\left(\frac{m}{n}\right)}{(r+1)!} \sum_{j=0}^{r} \binom{r+1}{r-j}(-2)^{k+j-r} \notag \\
		&= (-1)^{k+1} \sum_{r=0}^{k} \frac{(-2)^{k-r}\log^{r+1}\left(\frac{m}{n}\right)}{(r+1)!} \sum_{j=0}^{r+1} \binom{r+1}{r-j}(-2)^{j}\notag\\
		&= (-1)^{k+1} \sum_{r=0}^{k} \frac{(-2)^{k-r}\log^{r+1}\left(\frac{m}{n}\right)}{(r+1)!} \left( \frac{1+(-1)^r}{2} \right).
	\end{align*}  
	Therefore,
	\begin{align}
		f_{12}(m,n,k)-f_{12}(n,m,k)
		=2 \sum_{r=0}^{k} \frac{2^{k-r-1}}{(r+1)!} \left[ \log^{r+1}\left(\frac{n}{m}\right) - \log^{r+1}\left(\frac{m}{n}\right) \right]. \label{f12diff}
	\end{align}
	We next proceed to simplify $f_2(m, n, k)$ defined in \eqref{defn of f2}. Interchange the order of summation over $r$ and $t$ and simplify to get
	\begin{align*}
	f_2(m, n, k)
		&= \sum_{\ell =0}^{k} \sum_{t=0}^{\ell} \gamma_t \frac{2^\ell (-1)^{k-\ell}}{(k-\ell)!t!} \log^{k-t} \left( \frac{m}{n} \right) \sum_{r=0}^{\ell-t} \frac{\log^{-r} \left( \frac{m}{n} \right)}{(\ell-r-t)!}\notag\\
		&= \sum_{t=0}^{k} \frac{\gamma_t}{t!} \sum_{\ell=t}^{k} \frac{2^\ell (-1)^{k-\ell}}{(k-\ell)!} \log^{k-t} \left( \frac{m}{n} \right) \sum_{r=0}^{\ell-t} \frac{\log^{-r} \left( \frac{m}{n} \right)}{(\ell-r-t)!}\notag\\
		&= \sum_{t=0}^{k} \frac{\gamma_t}{t!} \sum_{j=0}^{k-t} \frac{2^{j+t} (-1)^{k-j-t}}{(k-j-t)!} \log^{k-t} \left( \frac{m}{n} \right) \sum_{r=0}^{j} \frac{\log^{-r} \left( \frac{m}{n} \right)}{(j-r)!}\notag\\
		&= \sum_{t=0}^{k} \frac{\gamma_t}{t!} \sum_{r=0}^{k-t} 2^t (-1)^{k-t} \log^{k-t-r} \left( \frac{m}{n} \right) \sum_{j=r}^{k-t} \frac{(-2)^j}{(k-j-t)!(j-r)!},
	\end{align*}
		where, in the third step above, we let $j=\ell-t$.	Let $\ell=j-r$ in the innermost sum above and simplify using the binomial theorem to arrive at
	\begin{align}\label{f2diff}
		f_2(m, n, k)
		= \sum_{t=0}^{k} \frac{\gamma_t}{t!} \sum_{r=0}^{k-t} \frac{2^{t+r} }{(k-t-r)!} \log^{k-t-r} \left( \frac{m}{n} \right).
	\end{align}
	Now substitute \eqref{f1 simplified}, \eqref{f11diff}, \eqref{f12diff} and \eqref{f2diff} in \eqref{T as difference of f1 and f2} and simplify so as to obtain
	\begin{align}
			&\sum_{\ell=0}^{k} \sum_{r=0}^{\ell} \frac{2^\ell T(m, n, k)}{(k-\ell)!(\ell-r)!}\notag\\
			&=\sum_{r=0}^{k} \frac{2^{k-r-1}}{(r+1)!} \left[ \log^{r+1}\left( \frac{n}{m} \right) - \log^{r+1}\left( \frac{m}{n} \right) \right] \notag\\
			&\quad+  \sum_{t=0}^{k} \frac{\gamma_t}{t!} \sum_{r=0}^{k-t} \frac{2^{t+r} }{(k-t-r)!} \left[ \log^{k-t-r} \left( \frac{m}{n} \right) - \log^{k-t-r} \left( \frac{n}{m} \right) \right]+O \left( \frac{\log^{k}(N)}{N} \right)\notag\\
&=\sum_{r=1}^{k+1} \frac{2^{k-r}}{r!} \left[ \log^{r}\left( \frac{n}{m} \right) - \log^{r}\left( \frac{m}{n} \right) \right] +  \sum_{\ell=0}^{k} \frac{\gamma_\ell}{\ell!} \sum_{r=0}^{k-\ell} \frac{2^{\ell+r}  \left( \log^{k-\ell-r} \left( \tfrac{m}{n} \right) - \log^{k-\ell-r} \left( \tfrac{n}{m} \right) \right)}{(k-\ell-r)!}+O \left( \frac{\log^{k}(N)}{N} \right)\notag\\
		&= (\gamma-1) \sum_{r=0}^{k} \frac{2^{k-r}}{r!} \left[ \log^{r}\left( \frac{m}{n} \right) - \log^{r}\left( \frac{n}{m} \right) \right] - \frac{\left( \log^{k+1}\left( \frac{m}{n} \right) - \log^{k+1}\left( \frac{n}{m} \right) \right)}{2(k+1)!}  \notag \\ 
		&\quad + \sum_{\ell=1}^{k} \frac{\gamma_\ell}{\ell!} \sum_{r=0}^{k-\ell} \frac{2^{k-r} }{r!} \left[ \log^{r} \left( \frac{m}{n} \right) - \log^{r} \left( \frac{n}{m} \right) \right]+O \left( \frac{\log^{k}(N)}{N} \right) \notag \\
		&= \sum_{\ell=0}^{k} \frac{a_\ell}{\ell!} \sum_{r=0}^{k-\ell} \frac{2^{k-r} }{r!} \left[ \log^{r} \left( \frac{m}{n} \right) - \log^{r} \left( \frac{n}{m} \right) \right] - \frac{\left( \log^{k+1}\left( \frac{m}{n} \right) - \log^{k+1}\left( \frac{n}{m} \right) \right)}{2(k+1)!} +O \left( \frac{\log^{k}(N)}{N} \right), \label{RHScalc}
	\end{align}
	where 
				\begin{align*}
					a_\ell:= 
					\begin{cases}
							\gamma -1  &  \text{if  } \ell=0, \\
							\gamma_{\ell}  & \text{if  } \ell>0 .
						\end{cases}
				\end{align*}
	From \eqref{maindiff1}, \eqref{RHScalc}, letting $N\to\infty$, rearranging and simplifying, we arrive at
	\small\begin{align*}
		&\sum_{\ell=0}^{k}\frac{1}{\ell!} \sum_{r=0}^{k-\ell}  \frac{1}{r!} \left\{ \frac{n(-1)^\ell 2^{k-\ell}}{m} \log^{\ell} \left( \frac{n}{m} \right)  \sum_{j=1}^{\infty} \left(  \psi_{r}' \left(1 + \frac{nj}{m}\right) - \frac{\log^{r}(\frac{nj}{m})}{\frac{nj}{m}} \right)  + a_\ell 2^{k-r} \log^{r} \left(\frac{n}{m} \right)  \right\} - \frac{\log^{k+1}\left(\frac{n}{m} \right)}{2(k+1)!}   \notag\\
		&=\sum_{\ell=0}^{k}\frac{1}{\ell!} \sum_{r=0}^{k-\ell}  \frac{1}{r!} \left\{ \frac{m(-1)^\ell 2^{k-\ell}}{n}  \log^{\ell} \left( \frac{m}{n} \right)  \sum_{j=1}^{\infty} \left(  \psi_{r}' \left(1 + \frac{mj}{n}\right) - \frac{\log^{r}(\frac{mj}{n})}{\frac{mj}{n}} \right)  + a_\ell 2^{k-r} \log^{r} \left(\frac{m}{n} \right)  \right\} - \frac{\log^{k+1}\left(\frac{m}{n}\right)}{2(k+1)!} . 
	\end{align*}
\normalsize
	Let $\frac{n}{m}=\alpha$ and $\frac{m}{n}=\beta$ to finally obtain \eqref{curious eqn} for $\alpha,\beta \in \mathbb{Q^+}$. Arguing as in the proof of Theorem \ref{guinand gen psi_k z>2}, we see that the result holds for $\alpha, \beta>0$.
\end{proof}

	\section{Concluding remarks}
	
	The results of this paper together fulfill the objective of deriving modular relations of two types for the generalized digamma functions $\psi_k(x)$ and their higher derivatives - one involving infinite series of $\psi_j^{(m)}(x), 0\leq j\leq k, k\in\mathbb{N}\cup\{0\}, m\in\mathbb{N}\cup\{0\}$, and the other involving finite sums of these functions. It is quite pleasing to know that none of these identities have lost their symmetrical nature. 
	
	The ones of the first type, that is, those involving infinite series, can be obtained from the second in the following way. We demonstrate this by deriving Theorem \ref{ramanujan generalization psi_k infinite} from Theorem \ref{finitetrans2}. Replace $m$ and $n$ by $mN$ and $nN$ respectively in Theorem \ref{finitetrans2} and let $x=\frac{1}{N}\left(\frac{1}{m}+\frac{1}{n}\right)$. Subtract the requisite number of terms from the asymptotic expansion of  $\psi_{\ell}$ to make the series involving $\psi_{\ell}$ convergent upon letting $N\to\infty$.  Finally, let $\alpha=m/n$ and $\beta=n/m$ and use analytic continuation to derive Theorem \ref{ramanujan generalization psi_k infinite} for Re$(\alpha)>0$ and Re$(\beta)>0$. However, this method can give us only the modular relation, not the integral involving the Riemann $\Xi$-function linked to it. See, for example, Theorem \ref{ramanujan analogue psi_1 infinite}. This linkage is extremely useful, for example, Hardy's proof \cite{Har} of the infinitude of the non-trivial zeros of $\zeta(s)$ on the critical line  - a first ever result of its kind - crucially employs the following result
	\begin{align*}
		\sqrt{\alpha}\bigg(\frac{1}{2\alpha}-\sum_{n=1}^{\infty}e^{-\pi\alpha^2n^2}\bigg)=\sqrt{\beta}\bigg(\frac{1}{2\beta}-\sum_{n=1}^{\infty}e^{-\pi\beta^2n^2}\bigg)=\frac{2}{\pi}\int_{0}^{\infty}\frac{\Xi(t/2)}{1+t^2}\cos\bigg(\frac{1}{2}t\log \a\bigg)\, dt.
	\end{align*}  
	
	This is why we derive Theorem \ref{ramanujan generalization psi_k infinite} from equation \eqref{mainneq2} as well. The reason we give the $\Xi$-function integral corresponding to only the modular relation in Theorem \ref{ramanujan analogue psi_1 infinite} and not for the general one in Theorem \ref{ramanujan generalization psi_k infinite} is because, in general, it is complicated in appearance. In principle, however, one can explicitly write it down. 
	
	Another way to derive the first equality in Theorem \ref{ramanujan analogue psi_1 infinite} could be to use the Kloosterman representation for $\psi(x)+\frac{1}{2x}-\log(x)$ \cite[p.~201, formula 5.74]{ober} and its analogue
		\begin{align*}
		\psi_1(x)+\frac{\log(x)}{2x}-\frac{1}{2}\log^{2}(x)=\frac{1}{2\pi i}\int_{d-i\infty}^{d+i\infty}\frac{\pi\zeta(1-s)}{\sin(\pi s)}\left(\gamma-\log(x)+\psi(s)\right)x^{-s}\, ds,
	\end{align*}
	where $d=\textup{Re}(s)>1$, which can be derived from \cite[Theorem 3.3]{bdg_log}. However, this is not a Mellin transform, rather a sum of two, which makes the derivation of the first equality in Theorem \ref{ramanujan analogue psi_1 infinite} difficult this way.
	
	One might as well derive Theorem \ref{ramanujan analogue psi_1 infinite} by starting with $\mathscr{I}(\alpha)$ and showing it to be equal to $\mathscr{F}_1(\alpha)$ or $\mathscr{F}_1(\beta)$ as done, for example, with the $\Xi$-function integrals in \cite{transf} although this looks difficult and unnatural because of the form of the integral. Moreover, it is unclear why it leads us to an interesting representation in terms of infinite series of $\psi$ and $\psi_1$. It was only because of the availability of \eqref{mainneq2} that we were able to get to the integral in this form.
	
	We remark that Theorems \ref{ramanujan generalization psi_k infinite}, \ref{ramanujan analogue psi_1 infinite} are actually valid for $\alpha, \beta\in\mathfrak{D'}$, where $\mathfrak{D'}$ is defined in \eqref{ddash}. This is now justified. From \cite[Equations (2.8), (2.11)]{ishibashi}, we have
	\begin{align}
	\psi_j(x)+\frac{\log^j(x)}{2 x}- \frac{\log^{j+1}(x)}{j+1}=(-1)^{j+1}\int_{0}^{\infty}e^{-xt}\left(\frac{1}{e^{t}-1}-\frac{1}{t}+\frac{1}{2}\right)S_{j+1}(t)\, dt,
	\end{align}
	where 	$S_j(t):=\sum_{m=0}^{j-1}a_{j,m}\log^{m}(t)$, 
	where, with $a_{1,0}=1$ and $a_{j, j-1}=1$, $a_{j, m}$ are recursively defined by
	$a_{j, m}=-\sum_{r=0}^{j-2}\binom{j-1}{r}\G^{(j-r-1)}(1)a_{r+1, m}, 0\leq m\leq j-2$. Employing the extension of Watson's lemma given in \cite[p.~14, Theorem 2.2]{temme2015} with the choice of $\alpha$ and $\beta$ in this extension to be $\alpha=-\pi/2$ and $\beta=\pi/2$, and the fact that $\psi_j(x)$ is analytic in $x\in\mathfrak{D'}$, we see that \eqref{asymptotic coffey} holds for $x\in\mathfrak{D'}$. (For $j=1$, this has been done in \cite[Theorem 3.2]{bdg_log}.) Hence the series in Theorems \ref{ramanujan generalization psi_k infinite} and \eqref{ramanujan analogue psi_1 infinite} are analytic functions of $\alpha$ and $\beta$ in this region. Thus, the results hold for $\alpha, \beta\in\mathfrak{D'}$. Also, the $\Xi$-function integral in Theorem \ref{ramanujan analogue psi_1 infinite} is analytic, as a function of $\alpha$, in  $\mathfrak{D'}$, as can be seen by invoking \eqref{strivert}, elementary bounds on zeta and hyperbolic functions. 
	It should also be possible to extend Theorems
	\ref{guinand gen psi_k z>2} and \ref{curious}  for $\alpha, \beta\in\mathfrak{D'}$, however, that would require first extending Lemma \ref{8.2a} for $x\in\mathfrak{D'}$. 
	
	It would be fascinating to see if there is a $\Xi$-function integral linked to the modular relations in Theorems \ref{guinand gen psi_k z>2} and \ref{curious}. We have no idea how the form of such integral would be, if at all it exists. 

It can be seen that the first equality of \eqref{mainneq2} is valid in the more general region Re$(z)>0, z\neq 2$. The restriction $0<\textup{Re}(z)<2$ has to be put only when we consider modular relation along with the $\Xi(t)$-function integral. It would be interesting to use \eqref{mainneq2} with $z=2+it, t\neq0$ and $\alpha\neq\beta$ to prove $\zeta(1+it)\neq0$. It is, of course, well-known that the latter is equivalent to the prime number theorem.

Recently, Darses and Hillion \cite{dh} applied an identity of Ramanujan \cite[Equation (22)]{riemann} involving the integral in \eqref{w1.26} towards obtaining closed-form identities for the polynomial moments with a weighted zeta square measure on the critical line. This certainly merits a similar study of the integral in \eqref{mathscr i alpha}.

Finally, it would be definitely worth seeing number-theoretic/combinatorial applications of Theorem \ref{Inv} different from what we have shown here.
	\begin{center}
		\textbf{Acknowledgements}
	\end{center}
	The authors thank Projesh Nath Choudhury of IIT Gandhinagar for informing them about the reference \cite{merca}. The first author is supported by the Swarnajayanti Fellowship grant SB/SJF/2021-22/08 of SERB (Govt. of India). The first and second authors are supported by the SERB CRG Grant CRG/2020/002367. They sincerely thank these agencies for the support. All of the authors thank IIT Gandhinagar for supporting their research.


\begin{thebibliography}{00}
		
			\bibitem{Apostol} T.~M.~Apostol, {\em Introduction to analytic number theory}, Springer Science+Business Media, New York, 1976.
			
				\bibitem{apostol1} T.~M.~Apostol, \emph{Formulas for higher derivatives of the Riemann zeta function}, Math.~Comp.~\textbf{44} no. 169 (1985), 223--232.
			
		\bibitem{bdg_log}
		S.~Banerjee, A.~Dixit and S.~Gupta, \emph{Lambert series of logarithm, the derivative of Deninger's function $R(z)$ and a mean value theorem for $\zeta(1/2-it)\zeta'(1/2+it)$}, submitted for publication.
		
		\bibitem{berndthurwitzzeta}
		B.~C.~Berndt, \emph{On the Hurwitz zeta function}, Rocky~Mountain~J.~Math.~\textbf{2} (1972), 151--157.
		
		\bibitem{RN_1}
		B.~C.~Berndt, \emph{Ramanujan's notebooks, Part \textup{I}}, Springer-Verlag, New York, 1985.
		

		\bibitem{bcbad}
		B.~C.~Berndt and A.~Dixit, \emph{A transformation formula involving the Gamma and Riemann zeta functions in Ramanujan's Lost Notebook}, \emph{The Legacy of Alladi Ramakrishnan in the Mathematical Sciences}, K. Alladi, J. Klauder, C. R. Rao, Eds, Springer, New York, 2010, pp.~199--210.
		
		\bibitem{blagouchine}
		I.~V.~Blagouchine, \emph{A theorem for the closed-form evaluation of the first generalized Stieltjes constant at rational arguments and some related summations}, J.~Number Theory~\textbf{148} (2015), 537--592.
		
		\bibitem{carlitz0}
		L.~Carlitz, \emph{Some finite analogues of the Poisson summation formula}, Proc. Edinburgh Math. Soc. (2) \textbf{12} (1961), 133--138.
		
		\bibitem{carlitz}
		L.~Carlitz, \emph{Some generalized multiplication formulas for Bernoulli polynomials and related functions}, Monatsh.~Math.~\textbf{66} (1962), 1--8. 
		
			\bibitem{chatkhur}
		T.~Chatterjee and S.~S.~Khurana, \emph{Shifted Euler constants and a generalization of Euler-Stieltjes constants}, J. Number Theory~\textbf{204} (2019), 185--210.
		
		\bibitem{coffey}
		M.~W.~Coffey, \emph{Series representations for the Stieltjes constants}, Rocky Mountain J.~Math.~\textbf{44} No. 2 (2014), 443--477.
		
				\bibitem{cop}
		E.~T.~Copson, \emph{Theory of Functions of a Complex Variable}, Oxford University Press, Oxford, 1935.
		
		
		\bibitem{deninger}
		C.~Deninger, \emph{On the analogue of the formula of Chowla and Selberg for real quadratic fields}, J.~Reine Angew. Math.~\textbf{351} (1984), 171--191.
		
		\bibitem{dilcher}
		K.~Dilcher, \emph{On generalized gamma functions related to the Laurent coefficients of the Riemann zeta function}, Aequationes Math.~\textbf{48} (1994), 55--85.
		
		\bibitem{dh}
		S.~Darses and E.~Hillion, \emph{Polynomial moments with a weighted zeta square measure on the critical line}, arXiv preprint (arXiv:2209.10990), 2022.
		
		\bibitem{dixitijnt}
		A.~Dixit, \emph{Analogues of a transformation formula of Ramanujan}, Int. J. Number Theory \textbf{7}(5) (2011), 1151--1172 .
		
		\bibitem{transf}
		A.~Dixit, \emph{Transformation formulas associated with integrals involving the Riemann $\Xi$-function}, Monatsh. Math.~\textbf{164}, No. 2 (2011), 133--156.
		
		\bibitem{dixitms}
		A.~Dixit, \emph{Modular-type transformations and integrals involving the Riemann $\Xi$-function}, Math. Student 87 Nos. 3--4 (2018), 47--59.
		
		\bibitem{hhf1}
		A.~Dixit, R.~Gupta and R.~Kumar, \emph{Extended higher Herglotz functions \textup{I}. Functional equations}, submitted for publication.
		
		\bibitem{dkRiMS1}
		A.~Dixit and R.~Kumar, \emph{Superimposing theta structure on a generalized modular relation}, Res. Math. Sci. \textbf{8} (2021), no. 3, Paper No. 41, 83 pp.
		
		\bibitem{dixitkumarhrj}
		A.~Dixit and R.~Kumar, \emph{A modular relation involving a generalized digamma function and asymptotics of some integrals containing $\Xi(t)$}, Hardy-Ramanujan J.~\textbf{45} (2022), 140--151.
		
		\bibitem{dixitmoll}
		A.~Dixit and V.~H.~Moll, \emph{Self-reciprocal functions, powers of the Riemann zeta function and modular-type transformations}, J.~Number~Theory~\textbf{147} (2015), 211--249.
		
		
		\bibitem{drzacta} 
		A.~Dixit, A.~Roy and A.~Zaharescu, \emph{Error functions, Mordell integrals and an integral analogue of a partial theta function}, Acta Arith. \textbf{177} No. 1 (2017), 1--37.
		
		\bibitem{dixzah}
		A.~Dixit and A.~Zaharescu, \emph{Ramanujan's paper on Riemann's functions $\xi(s)$ and $\Xi(t)$ and a transformation from the Lost Notebook}, to appear in \emph{Ramanujan: His Life, Legacy, and Mathematical Influence}, Springer.
		
%
		\bibitem{grn}
		I.~S.~Gradshteyn and I.~M.~Ryzhik, eds., \emph{Table of Integrals, Series, and Products}, 8th ed., Edited by D.~Zwillinger, V.~H.~Moll, Academic Press, New York, 2015.
		
		
		\bibitem{apg3}
		A.~P.~Guinand, \emph{A note on the logarithmic derivative of the
			Gamma function}, Edinburgh Math. Notes \textbf{38} (1952), 1--4.
		
		\bibitem{apg4}
		A.~P.~Guinand, \emph{Some finite identities connected with Poisson's summation formula}, Proc. Edinburgh Math. Soc. (2)~\textbf{12} (1960), 17--25.
		
		\bibitem{gk}
		R.~Gupta and R.~Kumar, \emph{Extended higher Herglotz functions \textup{II}}, J. Math. Anal. Appl. \textbf{518} no. 2 (2023),
		Paper No. 126720 (16 pages).
		
		\bibitem{Har} G. H. Hardy, Sur les z\'{e}ros de la fonction $\zeta(s)$ de Riemann, Comptes Rendus~\textbf{158} (1914), 1012--1014.
		
		\bibitem{ghh}
		G.~H.~Hardy, \emph{Note by Mr. G.H.~Hardy on the preceding paper}, Quart.~J.~Math.~\textbf{46} (1915),
		260--261.
		
		\bibitem{ishikan}
		M.~Ishibashi and S.~Kanemitsu, \emph{Dirichlet series with periodic coefficients}, Res.~Math.~\textbf{35} (1999), 70--88.
		
		\bibitem{ishibashi}
		M.~Ishibashi, \emph{Laurent coefficients of the zeta function of an indefinite quadratic form}, Acta Arith.~\textbf{106} No. 1 (2003), 59--71.
		
		\bibitem{kanemitsuclosed}
		S.~Kanemitsu, \emph{On evaluation of certain limits in closed form}, Th\'{e}orie des nombres (Quebec), 459--474, de Gruyter, Berlin, 1989.
		
		\bibitem{languasco}
		A.~Languasco and L.~Righi, \emph{A fast algorithm to compute the Ramanujan-Deninger gamma function and some number-theoretic applications}, Math.~Comp.~\textbf{90} No. 332 (2021), 2899--2921.
		
		\bibitem{merca}
		M.~Merca, \emph{A generalization of the symmetry between complete and elementary symmetric functions}, Indian~J.~Pure~Appl.~Math.~\textbf{45}(1) (2014), 75--89.
		
		\bibitem{ober}
		F.~Oberhettinger, \emph{Tables of Mellin Transforms}, Springer-Verlag, New York, 1974.
		
		\bibitem{oloa}
		O.~Oloa, \emph{On a series of Ramanujan}, in: Gems in Experimental Mathematics, T. Amdeberhan and V. H. Moll (eds.), Contemp. Math. 517, Amer. Math. Soc., Providence, RI, 2010, 305--311.
		
		\bibitem{nist}
		F.~W.~J.~Olver, D.~W.~Lozier, R.~F.~Boisvert and C.~W.~Clark, eds., \emph{NIST Handbook of Mathematical Functions}, Cambridge University Press, Cambridge, 2010.
		
		\bibitem{olver}
		F.~W.~J.~Olver, \emph{Asymptotics and Special Functions}, Academic Press, New York, 1974.
		
\bibitem{prevost}
M.~Pr\'{e}vost, \emph{Expansion of generalized Stieltjes constants in terms of derivatives of Hurwitz zeta-functions}, submitted for pubication, arXiv:2305.15806, 25 May, 2023. 
		
		\bibitem{riemann}
		S.~Ramanujan, \emph{New expressions for Riemann's functions
			$\xi(s)$ and $\Xi(s)$}, Quart.~J.~Math.~\textbf{46} (1915),
		253--260.
		
%
		\bibitem{shirasaka}
		S.~Shirasaka, \emph{On the Laurent coefficients of a class of Dirichlet series}, Result.~Math.~\textbf{42} (2002), 128--138.
		
		\bibitem{temme}
		N.~M.~Temme, \emph{Special functions: An introduction to the classical functions of mathematical physics}, Wiley-Interscience Publication, New York, 1996.
		
				\bibitem{temme2015}
		N.~M.~Temme, \emph{Asymptotic Methods for Integrals}, Series in Analysis, Vol. 6, World Scientific, Singapore, 2015.
		
		\bibitem{titch}
		E.~C.~Titchmarsh, \emph{The Theory of the Riemann Zeta-function}, Clarendon Press, Oxford, 1986.
		
		\bibitem{tyagi}
		S.~Tyagi, \emph{High precision computation and a new asymptotic formula for the generalized Stieltjes constants}, submitted for publication, arXiv:2212.0795v1, 15 Dec. 2022.
		
		\bibitem{williams-zhang}
		K.~S.~Williams and N.-Y. Zhang, \emph{Some results on the generalized Stieltjes constants}, Analysis~\textbf{14} (1994), 147--162.
		
			\bibitem{wong-wyman}
		R.~Wong and M.~Wyman, \emph{A generalization of Watson's lemma}, Canad.~J.~Math.~\textbf{24} No. 2 (1972), 185--208.
	\end{thebibliography}
\end{document}